\theoremstyle{plain}
\newtheorem{theorem}{Theorem}[section]
\newaliascnt{lem}{theorem}
\newtheorem{lemma}[lem]{Lemma}
\newaliascnt{prop}{theorem}
\newtheorem{prop}[prop]{Proposition}
\newaliascnt{cor}{theorem}
\newtheorem{corollary}[cor]{Corollary}
\newaliascnt{con}{theorem}
\theoremstyle{definition}
\newaliascnt{defi}{theorem}
\newenvironment{definition}
  {\pushQED{\qed}\defix}
  {\popQED\enddefix}
\newaliascnt{ex}{theorem}
\theoremstyle{remark}
\newaliascnt{rem}{theorem}
\newenvironment{remark}
  {\pushQED{\qed}\remarkx}
  {\popQED\endremarkx}
\renewcommand{\d}{\,\mathrm{d}}																						
\renewcommand*{\epsilon}{\varepsilon}                                   
\renewcommand*{\rho}{\varrho}                                   
\newcommand*{\nach}{\rightarrow}                                        
\newcommand*{\sep}{\; \vrule \;}                                                
\newcommand*{\N}{\mathbb{N}}                                            
\newcommand*{\R}{\mathbb{R}}                                            
\renewcommand{\i}{\mathrm{i}}
\newcommand*{\Z}{\mathbb{Z}}                                            %
\newcommand*{\B}{\mathcal{B}}                                           
\newcommand*{\id}{\mathrm{id}}                                          
\DeclareMathOperator{\rank}{rank}					
\newcommand*{\norm}[1]{\left\| #1 \right\|}                             
\newcommand*{\abs}[1]{\left| #1 \right|}                                
\newcommand*{\floor}[1]{\left\lfloor #1 \right\rfloor}                  
\newcommand*{\ceil}[1]{\left\lceil #1 \right\rceil}                     
\newcommand*{\link}[1]{(\ref{#1})}                                      
\renewcommand{\tilde}[1]{ \widetilde{#1} }        											
\newcommand{\BIGOP}[1]{\mathop  
 {\mathchoice 
        {\raise-0.22em\hbox{\huge $#1$}} 
        {\raise-0.05em\hbox{\Large $#1$}}{\hbox{\large $#1$}}{#1}
 }}
\newcommand{\bigtimes}{\BIGOP{\times}}                                  
\def\e{\varepsilon}
\def\la{\lambda}
\def\lall{\Lambda^{{\rm all}}}
\def\lstd{\Lambda^{{\rm std}}}
\begin{document}   
   
\title{Notes on $(s,t)$-weak tractability:\\ A refined classification of problems with (sub)exponential information complexity}     
 
\author{ 
Pawe{\l} Siedlecki\thanks{University of Warsaw, Faculty of Mathematics, Informatics and Mechanics, ul. Banacha 2, 02-097 Warszawa, Poland. Email: psiedlecki@mimuw.edu.pl. 
This author has been supported by National Science Centre of Poland (DEC-2012/07/N/ST1/03200). This author gratefully acknowledges the support of Institute for Computational and Experimental Research in 
Mathematics (ICERM).}
\and
Markus Weimar\thanks{Corresponding author. Philipps-University Marburg, Faculty of Mathematics and Computer Science, Hans-Meerwein-Stra{\ss}e, Lahnberge, 35032 Marburg, Germany. Email: weimar@mathematik.uni-marburg.de. This author has been supported by Deutsche Forschungsgemeinschaft DFG (DA 360/19-1).}
}  
   
\maketitle

\begin{abstract}
\noindent
In the last 20 years a whole hierarchy of notions of tractability was proposed and analyzed by several authors. These notions are used to classify the computational
hardness of continuous numerical problems $S=(S_d)_{d\in\N}$ in terms of the behavior of their information complexity $n(\epsilon,S_d)$ as a function of the accuracy $\epsilon$ and the dimension
$d$. 
By now a lot of effort was spend on either proving quantitative positive results (such as, e.g., the concrete dependence on $\epsilon$ and $d$ within the well-established framework of polynomial tractability), or on qualitative negative results (which, e.g., state that a given problem suffers from the so-called curse of dimensionality).
Although several weaker types of tractability were introduced recently, the theory
of information-based complexity still lacks a notion which allows to quantify the exact (sub-/super-)exponential dependence of $n(\epsilon,S_d)$ on both parameters $\epsilon$ and $d$.
In this paper we present the notion of $(s,t)$-weak tractability which attempts to fill this gap. 
Within this new framework the parameters $s$ and $t$ are used to
quantitatively refine the huge class of polynomially intractable problems. 
For linear, compact operators between Hilbert spaces we provide characterizations of $(s,t)$-weak
tractability w.r.t.\ the worst case setting in terms of singular values. 
In addition, our new notion is illustrated by classical examples which recently attracted some attention. 
In detail, we study approximation problems between periodic Sobolev spaces and integration problems for classes of smooth functions.

\smallskip
\noindent \textbf{Keywords:} \textit{Information-based complexity, Multivariate numerical problems, Hilbert spaces, Tractablity, Approximation, Integration.}

\smallskip
\noindent \textbf{Subject Classification:} 
68Q25,  
65Y20, 
41A63. 
\end{abstract}

\section{Introduction}
Let $S=(S_d)_{d\in\N}$ denote a multivariate numerical problem, i.e., a sequence of \emph{solution operators}~$S_d$, where each of them maps problem elements $f$ from a subset of some normed (\emph{source}) space~$F_d$ onto its solution $S_d(f)$ in some other (\emph{target}) space $G_d$. 
In the following we refer to the parameter~$d$ as the \emph{dimension} of the problem instance $S_d$.
Typical examples cover approximation problems (where $S_d$ is an embedding operator between spaces of $d$-variate functions) or integration problems (where $S_d(f)$ is defined as the integral of $f$ over some $d$-dimensional domain).

We are interested in the computational hardness of $S$ with respect to given classes of algorithms. 
This can be modeled by the \emph{information complexity} $n(\epsilon,S_d)$ which is defined as the minimal number of information operations that are needed to solve the $d$-dimensional problem with accuracy $\epsilon>0$:
\begin{equation}\label{def:abs_crit}
	n^{\mathrm{abs}}(\epsilon,S_d)
	:= \min\!\left\{ n\in\N_0 \sep e(n,d) \leq \epsilon \right\}.
\end{equation}
Therein the quantity $e(n,d)$ is defined as the minimal error (measured w.r.t.\ a given \emph{setting}) that can be achieved among all algorithms (within the class under consideration) that use at most $n\in\N_0$ information operations (degrees of freedom) on the input $f$ to approximate the exact solution $S_d(f)$.
The \emph{initial error} of the $d$-dimensional problem instance $S_d$ is denoted by 
\begin{equation*}
	\epsilon_{d}^{\mathrm{init}}
	:=e(0,d), \quad d\in\N.
\end{equation*}
Besides the information complexity with respect to the \emph{absolute error criterion} as defined in \link{def:abs_crit} we also consider the respective quantity w.r.t.\ the \emph{normalized error criterion},
\begin{equation}\label{def:norm_crit}
	n^{\mathrm{norm}}(\epsilon,S_d)
	:=\min\!\left\{ n\in\N_0 \sep e(n,d)\leq \epsilon\cdot \epsilon_{d}^{\mathrm{init}} \right\},
\end{equation}
which measures how many pieces of information are needed to reduce the initial error by some factor $\epsilon\in(0,1]$.
Typical classes of algorithms under consideration are, e.g., methods based on arbitrary linear functionals (information in $\Lambda^{\mathrm{all}}$), or algorithms which are allowed to use function values ($\Lambda^{\mathrm{std}}$) only. 
Moreover, one may stick to linear methods only, allow or prohibit adaption and/or randomization.
Possible settings include the \emph{worst case}, \emph{average case}, \emph{probabilistic}, and the \emph{randomized setting}.
For concrete definitions, explicit complexity results, and further references, see, e.g., the monographs \cite{NW08,NW10,NW12,TWW88}, as well as the recent survey \cite{Wei14a}.

For the ease of presentation, in what follows, we mainly focus our attention on linear algorithms and their worst case errors among the unit ball $\B(F_d):=\{f\in F_d \sep \norm{f\sep F_d}\leq 1\}$ of our source space $F_d$. That is, we set
\begin{equation*}
	e(n,d)
	:=e^\mathrm{wor}(n,d;\Lambda) 
	:= \inf_{A_{n,d}} \sup_{f\in \B(F_d)} \norm{S_d(f)-A_{n,d}(f) \sep G_d},
\end{equation*}
where the infimum is taken w.r.t.\ to all (deterministic, linear, non-adaptive) algorithms that use~$n$ information operations from the class $\Lambda \in \{\Lambda^{\mathrm{all}}, \Lambda^{\mathrm{std}}\}$.

In the last 20 years a whole hierarchy of notions of tractability was proposed and analyzed by several authors in order to classify the behavior of the information complexity $n(\epsilon,S_d)$ as a function of the accuracy $\epsilon$ \emph{and} the dimension $d$.
By now a lot of effort was spend on either proving \emph{quantitative} positive results (such as, e.g., the concrete polynomial dependence on~$\epsilon$ and $d$ in the well-established framework of \emph{polynomial tractability}), or on \emph{qualitative} negative results (which, e.g., state that a given problem suffers from the \emph{curse of dimensionality}); again see \cite{NW08,NW10,NW12}.
Although several weaker types of tractability were introduced recently \cite{GneWoz11,PapPet14,Sie13}, to the best of our knowledge, the theory of information-based complexity still lacks a notion which allows to \emph{quantify} the exact (sub-/super-) exponential dependence of $n(\epsilon,S_d)$ on both parameters~$\epsilon$ and~$d$.
The aim of this paper is to fill this gap. 
To this end, we define and analyze the notion of \emph{$(s,t)$-weak tractability} which was coined very recently in \cite{Wei14}.

The material is organized as follows:
in \autoref{sect:def} we introduce our new category of $(s,t)$-weakly tractable problems and investigate relations with existing classes of tractability.
The subsequent \autoref{sect:hilbert} which deals with linear problems defined between Hilbert spaces then contains our main results. 
First of all, in \autoref{subsect:general}, we provide a characterization of $(s,t)$-weak tractability for general (linear and compact) Hilbert space problems $S=(S_d)_{d\in\N}$ w.r.t.\ the worst case setting (for both error criteria and information from $\Lambda^{\mathrm{all}}$) in terms of their singular values $\lambda^{(d)}=(\lambda_{d,j})_{j\in\N}$, $d\in\N$.
In \autoref{subsect:tensor}, we focus on the important subclass of tensor product problems between Hilbert spaces and prove corresponding assertions which rely on the univariate singular values $\lambda^{(1)}=(\lambda_{j})_{j\in\N}$ only.
To conclude this part of the paper, \autoref{subsect:standard} is devoted to a comparison of the power of function values (information from $\Lambda^{\mathrm{std}}$) and of general linear information ($\Lambda^{\mathrm{all}}$) for the specific problem of (weighted) multivariate approximation in the worst case setting as studied, e.g., in \cite[Chapter~26]{NW12}.
Finally, \autoref{sect:applications} deals with concrete examples recently studied by other authors. 
Here we investigate $(s,t)$-weak tractability for embeddings of periodic Sobolev spaces on the torus $\mathbb{T}^d$ for different norms (based on Fourier coefficients) in the source spaces. 
As a byproduct we close a gap in the characterization of (classical) weak tractability \cite[Theorem~5.5]{KueSicUll14}. 
Furthermore, we derive a positive complexity result for the integration problem of smooth functions based on error bounds recently published in \cite{HinNovUll14,HinNovUll+14}.

\section{Definition and simple properties}\label{sect:def}

Let $S=(S_d)_{d\in\N}$ denote a multivariate problem in the sense of the previous section and let $n^{\mathrm{crit}}(\epsilon,S_d)$, $\mathrm{crit}\in\{\mathrm{abs},\mathrm{norm}\}$, denote its information complexity with respect to the absolute or normalized error criterion in some fixed setting, respectively.
In order to quantify polynomial intractability, we generalize the by now classical notion of weak tractability as follows:

\begin{definition}\label{defi:st-WT}
If for some \emph{fixed} parameters $s,t \geq 0$ it holds
\begin{gather}\label{def:st-weak}
		\lim_{\epsilon^{-1}+d\nach\infty} \frac{\ln n^{\mathrm{crit}}(\epsilon,S_d)}{\epsilon^{-s}+d^{\,t}}=0,
\end{gather}
then the problem $S$ is called \emph{$(s,t)$-weakly tractable}.
\end{definition}
Roughly speaking, this means that we have $(s,t)$-weak tractability if the information complexity is neither exponential in $d^{\,t}$, nor in $\epsilon^{-s}$. Thus, varying $s$ and $t$ we are now able to quantify a (sub-/super-)exponential behavior of $n^{\mathrm{crit}}(\epsilon,S_d)$ in $\epsilon$ and/or in $d$.
As usual, the limit in \link{def:st-weak} is taken w.r.t.\ all two-dimensional sequences $((\epsilon_k,d_k))_{k\in\N}\subset(0,1]\times\N$ such that 
\begin{equation*}
	\epsilon_k < 
	\begin{cases}
		\epsilon_{d_k}^{\mathrm{init}} & \text{if} \quad \mathrm{crit} = \mathrm{abs},\\
		1 & \text{if} \quad \mathrm{crit} = \mathrm{norm},
	\end{cases}
\end{equation*}
for each $k\in\N$ and $\epsilon_k^{-1}+d_k\nach\infty$, as $k$ approaches infinity. 
Note that, without loss of generality, in what follows we always assume that $\epsilon_{d}^{\mathrm{init}}>0$ for every $d\in\N$ such that $n^{\mathrm{crit}}(\epsilon,S_d) \geq 1$ for all $\epsilon$ and $d$ under consideration.

In the subsequent \autoref{rem:parameter} we justify that the most interesting ranges of parameters are $0<s$ and $0<t\leq 1$.

\begin{remark}\label{rem:parameter}
Assume that the problem $S$ satisfies \link{def:st-weak} with $s=0 \leq t$. 
Given $d=d_0\in\N$ arbitrarily fixed we consider sequences $(\epsilon_k)_{k\in\N}$ for which $\min\{1,\epsilon_{d_0}^{\mathrm{init}}\} > \epsilon_1 \geq \epsilon_2 \geq \ldots > 0$.
Then \link{def:st-weak} and the fact that $n^{\mathrm{crit}}(\epsilon,S_d)$ is monotone in the accuracy $\epsilon$, i.e., $n^{\mathrm{crit}}(\epsilon_k,S_{d_0})\leq n^{\mathrm{crit}}(\epsilon_{k+1},S_{d_0})$ for all $k\in\N$, implies that $n^{\mathrm{crit}}(\epsilon_k,S_{d_0})=1$ for each $k$.
Hence, the problem is trivial since in every dimension it can be solved with arbitrary accuracy using only one piece of information on the input. In particular, such problems are strongly polynomially tractable.

Now suppose $S$ satisfies \link{def:st-weak} with $s>0=t$. Let $\epsilon=\epsilon_0\in(0,1)$ be arbitrarily fixed and consider the sequence $((\epsilon_0,d))_{d\in\N}$. 
Then, for the normalized error criterion, equation~\link{def:st-weak} implies that $n^{\mathrm{norm}}(\epsilon_0,S_d)\nach 1$, as $d\nach\infty$. Thus, for all dimensions $d$ larger than a certain $d^*(\epsilon_0)\in\N$ the problem $S$ can be solved to within the threshold $\epsilon_0$ using again only one information operation.
When dealing with the absolute error criterion every $d$ such that $\epsilon_{d}^{\mathrm{init}} \leq \epsilon_0$ satisfies $n^{\mathrm{abs}}(\epsilon_0,S_d)=0$. 
For the subsequence of all remaining $d$ the same argument as before applies. 
Therefore a problem for which $(s,0)$-weak tractability holds is trivial in the sense that asymptotically in $d$ it can be solved with arbitrary accuracy using at most one piece of information on the input.

Finally consider a problem $S$ which is $(s,t)$-weakly tractable with $s\geq 0$ and $t>1$ and, for simplicity, assume that there exists some positive constant $c<1$ such that $\epsilon_{d}^{\mathrm{init}}>c$ for all $d\in\N$.
Then the information complexity of $S$ is allowed to be lower bounded by $C \cdot (1+\gamma)^d$ for some $C,\gamma>0$, all $d\in\N$, and some fixed $\epsilon \in (0,c)$. Hence, $S$ may suffer from the curse of dimensionality.
\end{remark}

Next we compare the class of problems defined in \autoref{defi:st-WT} with existing notions of tractability. 
For this purpose recall that $S=(S_d)_{d\in\N}$ is called \emph{weakly tractable} if its information complexity is neither exponential in $\epsilon^{-1}$, nor in $d$, and $S$ is said to be \emph{uniformly weakly tractable} if $n^{\mathrm{crit}}(\epsilon,S_d)$ is not exponential in \emph{any positive power} of $\epsilon$ and/or $d$; see \cite{NW08} and \cite{Sie13}, respectively.
Hence, from \autoref{defi:st-WT} we immediately deduce the following proposition.
\begin{prop}
	Obviously,
	\begin{itemize}
		\item $S$ is uniformly weakly tractable if and only if it is $(s,t)$-weakly tractable for all $s,t>0$.
		\item $S$ is weakly tractable (in the classical sense) if and only if it is $(1,1)$-weakly tractable.
		\item for all $0 \leq s \leq \sigma$ and $0 \leq t \leq \tau$ fixed $(s,t)$-weak tractability implies $(\sigma,\tau)$-weak tractability.
	\end{itemize}
\end{prop}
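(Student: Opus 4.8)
The plan is to simply unwind the definitions; as the word ``obviously'' already suggests, none of the three claims needs more than that. For the first item I would recall that, by its very definition in \cite{Sie13}, \emph{uniform weak tractability} means that $\ln n^{\mathrm{crit}}(\epsilon,S_d)$ is subexponential in every positive power of $\epsilon^{-1}$ and of $d$, i.e., that $\ln n^{\mathrm{crit}}(\epsilon,S_d)/(\epsilon^{-s}+d^{\,t})\nach 0$ (as $\epsilon^{-1}+d\nach\infty$) for \emph{all} $s,t>0$ --- which is exactly ``$(s,t)$-weakly tractable for all $s,t>0$''. Likewise, for the second item, \emph{classical weak tractability} in the sense of \cite{NW08} asks precisely that $\ln n^{\mathrm{crit}}(\epsilon,S_d)/(\epsilon^{-1}+d)\nach 0$, which is \link{def:st-weak} with $s=t=1$. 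So both equivalences are immediate upon comparing with \autoref{defi:st-WT}.

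The only item carrying a (tiny) argument is the third one, the monotonicity in the exponents. The key observation is that throughout we restrict to $(\epsilon,d)\in(0,1]\times\N$, hence $\epsilon^{-1}\geq 1$ and $d\geq 1$; therefore $0\leq s\leq\sigma$ gives $\epsilon^{-s}\leq\epsilon^{-\sigma}$ and $0\leq t\leq\tau$ gives $d^{\,t}\leq d^{\,\tau}$, so that
\begin{equation*}
	\epsilon^{-s}+d^{\,t}\leq\epsilon^{-\sigma}+d^{\,\tau}.
\end{equation*}
Since the standing assumption ($\epsilon_d^{\mathrm{init}}>0$ for all $d$) guarantees $n^{\mathrm{crit}}(\epsilon,S_d)\geq 1$ and hence $\ln n^{\mathrm{crit}}(\epsilon,S_d)\geq 0$, I would then conclude
\begin{equation*}
	0\leq \frac{\ln n^{\mathrm{crit}}(\epsilon,S_d)}{\epsilon^{-\sigma}+d^{\,\tau}}\leq\frac{\ln n^{\mathrm{crit}}(\epsilon,S_d)}{\epsilon^{-s}+d^{\,t}}.
\end{equation*}
As the admissible two-dimensional sequences $((\epsilon_k,d_k))_{k\in\N}$ entering the limit in \link{def:st-weak} depend only on the chosen error criterion and not on the exponents, $(s,t)$-weak tractability forces the right-hand side to vanish along every such sequence, and then the sandwich theorem forces the left-hand side to vanish too. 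That is $(\sigma,\tau)$-weak tractability.

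I do not anticipate any genuine obstacle. The only things to be careful about are citing the definitions of uniform and classical weak tractability in exactly the form used here, and pointing explicitly to the standing assumption $\epsilon_d^{\mathrm{init}}>0$, since that is what makes the logarithm nonnegative and thus makes the sandwich estimate legitimate.
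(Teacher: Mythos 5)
Your proof is correct and is exactly the intended unwinding of definitions that the word ``obviously'' signals: items one and two are restatements of the defining limits for uniform and classical weak tractability, and item three follows from the monotonicity of $\epsilon^{-s}$ in $s$ (since $\epsilon\leq 1$) and of $d^{\,t}$ in $t$ (since $d\geq 1$), together with $\ln n^{\mathrm{crit}}(\epsilon,S_d)\geq 0$, which the paper's standing assumption $\epsilon_d^{\mathrm{init}}>0$ provides. Nothing is missing.
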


Thus, in the tractability hierarchy, $(s,t)$-weak tractability with parameters $0<s,t<1$ is located in-between uniform weak tractability and classical weak tractability; see \autoref{fig} below.
Moreover, the examples in \autoref{sect:sobolev} below show that these inclusions are strict.

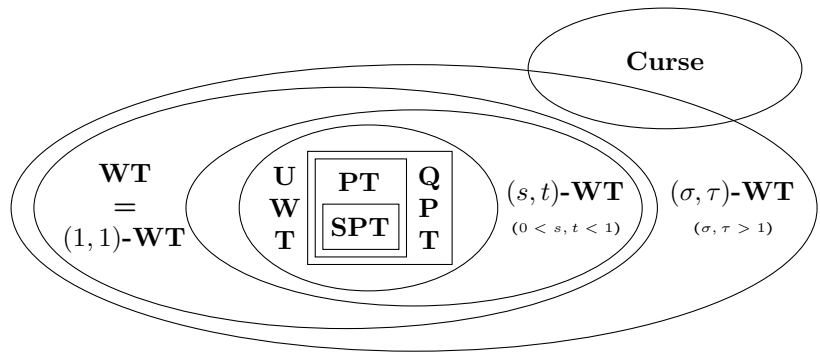
\begin{figure}[!h]
  \begin{center}
\renewcommand{\baselinestretch}{1.0}\normalsize
    \begin{tikzpicture}
    \draw (0,0) rectangle (1,0.6); 
    	\node at (0.6, 0.3) {\parbox{1cm}{\bf SPT}};
    \draw (-0.1,-0.1) rectangle (1.1,1.2); 
       	\node at (0.7, 0.9) {\parbox{1cm}{\bf PT}};
    \draw (-0.2,-0.2) rectangle (1.7,1.3); 
	    \node at (1.4, 0.55) {\parbox{1cm}{\centering\bf{Q\\ P \\ T}}};
    \draw (0.6,0.55) ellipse (1.7cm and 1.1cm); 
	    \node at (-0.5, 0.55) {\parbox{1cm}{\centering\bf{U \\ W\\ T}}};
	 \draw (1.2,0.55) ellipse (3cm and 1.3cm); 	 
		 \node at (3.2, 0.55) {\parbox{2cm}{\centering\bf{$(s,t)$-WT\\ \tiny{($0<s,t<1$)}}}};   
     \draw (0.3,0.55) ellipse (4.1cm and 1.6cm); 	 
	     \node at (-2.6, 0.55) {\parbox{2cm}{\centering \bf{WT\\ = \\$(1,1)$-WT}}};   
     \draw (1.2,0.55) ellipse (5.3cm and 1.9cm); 	
	     \node at (5.4, 0.55) {\parbox{2cm}{\centering\bf{$(\sigma,\tau)$-WT\\ \tiny{($\sigma,\tau>1$)}}}};   
     \draw (4.5,2.4) ellipse (1.8cm and 0.8cm); 
	      \node at (4.5, 2.5) {\parbox{2cm}{\centering \bf{Curse}}};   
    \end{tikzpicture}
    \renewcommand{\baselinestretch}{1.50}\normalsize
    \caption{\label{fig}Interrelation of $(s,t)$-weak tractability ($(s,t)$-WT) with strong polynomial/ polynomial/quasi-polynomial tractability (SPT/PT/QPT), as well as with uniform weak/weak tractability (UWT/WT), and the curse of dimensionality.}
  \end{center}
\end{figure}

\section{Compact linear problems defined between Hilbert spaces}\label{sect:hilbert}
\subsection{General Hilbert space problems}\label{subsect:general}
In this section we consider problems $S=(S_d)_{d\in\N}$ defined between arbitrary Hilbert spaces $H_d$ and $G_d$. That is, for every $d\in\N$ we assume that $S_d\colon H_d \nach G_d$ is a linear and compact operator which is characterized by its (squared) singular values $\lambda^{(d)}=(\lambda_{d,j})_{j\in\N}$; see, e.g., \cite{NW08} or \cite{Wei14a}. 
Without loss of generality we restrict our attention to those problems 
$S$ which are defined over infinite dimensional, separable Hilbert spaces. While 
this assumption simplifies our analysis, it does not harm the generality 
of our investigations; cf.\ \cite[Remark 2.6]{Wei14a} for details. 

Moreover, we may assume that for all $d$ the sequences $\lambda^{(d)}$ are not trivial and possess a non-increasing ordering:
\begin{equation*}
	\lambda_{d,1} \geq \lambda_{d,2}\geq \ldots \geq 0
	\quad \text{with} \quad \lambda_{d,1}>0, \qquad d\in\N.
\end{equation*}
Then none of the $S_d$'s is the zero operator and the initial error in dimension $d$ is given by $\epsilon_d^{\mathrm{init}} = \norm{S_d} = \lambda_{d,1}^{1/2}$.
We study the worst case setting with respect to the absolute and the normalized error criterion for the class $\Lambda^{\mathrm{all}}$ of all continuous linear functionals. To this end, given $d\in\N$, we define
\begin{equation}\label{def:CRI}
	\mathrm{CRI}_d 
	:= \begin{cases}
		1 & \text{if} \quad \mathrm{crit}=\mathrm{abs},\\
		\lambda_{d,1} & \text{if} \quad \mathrm{crit}=\mathrm{norm}.
	\end{cases}
\end{equation}
Recall that in this setting the $n$th optimal algorithm in dimension $d$ is given by the image (under~$S_d$) of the orthogonal projection of the input onto the subspace spanned by the eigenelements $\eta_{d,j}$ of the positive semi-definite, self-adjoint, and compact operator
\begin{equation*}
	W_d := S_d^* \circ S_d\colon H_d\nach H_d
\end{equation*}
which correspond to the $n$ largest eigenvalues $\lambda_{d,1},\ldots,\lambda_{d,n}$. Furthermore, from the general theory it follows that $e(n,d)=\sqrt{\lambda_{d,n+1}}$ and thus
\begin{equation}\label{eq:explicit_n}
	n^{\mathrm{crit}}(\epsilon,S_d) 
	= \min\!\left\{n\in\N_0 \sep \lambda_{d,n+1}\leq \epsilon^2\, \mathrm{CRI}_d \right\}, 
	\qquad \mathrm{crit}\in\{\mathrm{abs},\mathrm{norm}\}.
\end{equation}
For details and further reading we again refer to \cite{NW08} and \cite{Wei14a}. 

In the sequel we derive necessary and sufficient conditions for $(s,t)$-weak tractability of general Hilbert space problems $S=(S_d)_{d\in\N}$.
We start with conditions for the non-limiting case $\min\{s,t\}>0$. Afterwards, the analysis is completed by results for the cases in which $s=0$ and/or $t=0$.

\subsubsection{Non-limiting case}
\begin{theorem}\label{thm:general}
	Let $S=(S_d)_{d\in\N}$ be defined as above and consider the worst case setting w.r.t.\ the absolute or normalized error criterion for the class $\Lambda^{\mathrm{all}}$. In addition, let $s,t>0$ and $0<\overline{\beta}<1$ be fixed. 
	Then $S$ is $(s,t)$-weakly tractable if and only if the following two conditions hold:
	\begin{enumerate}[label=(C\arabic{*}), ref=C\arabic{*}]
		\item \label{cond:1} For every $d\in\N$ we have
		\begin{equation*}
			\lim_{j\nach\infty} \frac{\lambda_{d,j}}{\mathrm{CRI}_d} \, \ln^{2/s}\! j = 0.
		\end{equation*}
		\item \label{cond:2} There exists a function $f_{s,t}\colon (0,\overline{\beta}]\nach \N$ such that
		\begin{equation*}
			L_{s,t}
			:= \sup_{\beta\in (0,\overline{\beta}]} \frac{1}{\beta^{2/s}} \sup_{\substack{d\in\N,\\d\geq f_{s,t}(\beta)}} \sup_{\substack{j\in\N,\\j\geq\ceil{\exp(d^{\,t}\sqrt{\beta})}+1}} \frac{\lambda_{d,j}}{\mathrm{CRI}_d} \, \ln^{2/s}\! j < \infty.
		\end{equation*}
	\end{enumerate}
\end{theorem}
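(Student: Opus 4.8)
The plan is to work throughout with the normalized quantities $a_{d,j}:=\lambda_{d,j}/\mathrm{CRI}_d$. By \link{eq:explicit_n} and monotonicity of $(\lambda_{d,j})_{j\in\N}$, for every \emph{valid} accuracy $\epsilon$ (that is, $\epsilon<\epsilon_d^{\mathrm{init}}$ if $\mathrm{crit}=\mathrm{abs}$ and $\epsilon<1$ if $\mathrm{crit}=\mathrm{norm}$; in both cases this reads $0<\epsilon<\sqrt{a_{d,1}}$) and every $m\in\N_0$ one has $n^{\mathrm{crit}}(\epsilon,S_d)\leq m$ if and only if $a_{d,m+1}\leq\epsilon^2$; since $\lambda_{d,j}\nach0$ this number is finite and
\begin{equation*}
	\ln n^{\mathrm{crit}}(\epsilon,S_d)=\max\left\{\ln j \sep j\in\N,\ a_{d,j}>\epsilon^2\right\}.
\end{equation*}
Consequently $(s,t)$-weak tractability is equivalent to the following clause: for each $\delta>0$ there is $C_\delta\geq1$ such that $\ln j>\delta(\epsilon^{-s}+d^{\,t})$ implies $a_{d,j}\leq\epsilon^2$, for all $j\in\N$ and all valid pairs $(\epsilon,d)$ with $\epsilon^{-1}+d\geq C_\delta$. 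It suffices to verify this clause for all sufficiently small $\delta$, since larger values follow by monotonicity in $\delta$. I would prove both implications of \autoref{thm:general} through this reformulation.

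\textbf{Necessity.} Suppose the clause holds. For \link{cond:1}, fix $d\in\N$ and $\delta>0$. For all large $j$ the accuracy $\epsilon_j:=(\delta/(\ln j-\delta d^{\,t}))^{1/s}$ is valid and satisfies $\epsilon_j^{-1}+d\geq C_\delta$, and since $\delta(\epsilon_j^{-s}+d^{\,t})=\ln j$ the clause gives $a_{d,j+1}\leq\epsilon_j^2$; hence $\limsup_{j\nach\infty}a_{d,j}\ln^{2/s}\! j\leq\delta^{2/s}$, and letting $\delta\nach0$ proves \link{cond:1}. For \link{cond:2}, given $\beta\in(0,\overline\beta]$ put $\delta:=\beta/8$ and $f_{s,t}(\beta):=\max\{1,\ceil{C_\delta}\}$, fix $d\geq f_{s,t}(\beta)$ and $j\geq\ceil{\exp(d^{\,t}\sqrt\beta)}+1$, and set $\epsilon_*:=(\delta/(\ln j-\delta d^{\,t}))^{1/s}$. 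From $d^{\,t}<\ln j/\sqrt\beta$ and $\beta<1$ we get $\ln j-\delta d^{\,t}>\tfrac12\ln j$, hence $\epsilon_*^2<(\beta/4)^{2/s}\ln^{-2/s}\! j$. If $\epsilon_*<\sqrt{a_{d,1}}$, then every valid $\epsilon\in(\epsilon_*,\sqrt{a_{d,1}})$ has $\epsilon^{-1}+d\geq d\geq C_\delta$ and $\delta(\epsilon^{-s}+d^{\,t})<\ln j$, so the clause yields $a_{d,j}\leq\epsilon^2$; letting $\epsilon\downarrow\epsilon_*$ gives $a_{d,j}\leq\epsilon_*^2$. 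If instead $\epsilon_*\geq\sqrt{a_{d,1}}$, then $a_{d,j}\leq a_{d,1}\leq\epsilon_*^2$ trivially. Either way $\beta^{-2/s}a_{d,j}\ln^{2/s}\! j<(1/4)^{2/s}$, so $L_{s,t}<\infty$ and \link{cond:2} holds; the concrete value of $\overline\beta\in(0,1)$ is immaterial here.

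\textbf{Sufficiency.} Assume \link{cond:1} and \link{cond:2}, fix a small $\delta>0$, and put $\beta:=\delta^2/4\in(0,\overline\beta]$ (so $\sqrt\beta=\delta/2$). Consider a valid pair $(\epsilon,d)$ with $\epsilon^{-1}+d\geq C_\delta$ and some $j$ with $\ln j>\delta(\epsilon^{-s}+d^{\,t})$; in particular $\ln j>\delta\epsilon^{-s}$ and $\ln j>\delta d^{\,t}$, and for $C_\delta$ large also $\ln j\geq d^{\,t}\sqrt\beta+2$, i.e.\ $j\geq\ceil{\exp(d^{\,t}\sqrt\beta)}+1$ (a short case check according to whether $d^{\,t}\geq4/\delta$). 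If $d\geq f_{s,t}(\beta)$, then \link{cond:2} gives $a_{d,j}\leq L_{s,t}\,\beta^{2/s}\ln^{-2/s}\! j\leq L_{s,t}(\delta/4)^{2/s}\epsilon^2$, which is $\leq\epsilon^2$ once $\delta$ is small enough. If $d<f_{s,t}(\beta)$ — only finitely many dimensions — then \link{cond:1} provides $J_d$ with $a_{d,j}\ln^{2/s}\! j\leq\delta^{2/s}$ for $j\geq J_d$, which again forces $a_{d,j}\leq\epsilon^2$ when $j\geq J_d$; and for $j<J_d$ the inequality $\ln j>\delta(\epsilon^{-s}+d^{\,t})$ keeps $\epsilon^{-1}+d$ bounded, so such configurations disappear after enlarging $C_\delta$. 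This establishes the clause, i.e.\ $(s,t)$-weak tractability.

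The heart of the matter is getting the coupling of the two parameter scales right: in \link{cond:2} one must link the tractability threshold to the auxiliary parameter via $\delta\asymp\beta$ rather than the naively expected $\delta\asymp\sqrt\beta$, because only this scaling reproduces the exponent $2/s$ on $\beta$. Handling the finitely many ``small'' dimensions through \link{cond:1}, together with the possibility (in the absolute error criterion) of arbitrarily small initial errors — absorbed into the trivial case $\epsilon_*\geq\sqrt{a_{d,1}}$ — is the remaining, purely technical, ingredient.
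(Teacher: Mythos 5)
Your proof is correct and takes essentially the same route as the paper's: both arguments run through the equivalence, stemming from \link{eq:explicit_n}, between bounds on $\ln n^{\mathrm{crit}}(\epsilon,S_d)$ and decay bounds on $\lambda_{d,j}/\mathrm{CRI}_d$, both split into finitely many ``small'' dimensions (handled via \link{cond:1}) and large dimensions $d\geq f_{s,t}(\beta)$ (handled via \link{cond:2}), and both rely on the same scaling $\delta\asymp\beta$ in the threshold versus $\sqrt\beta$ in the index cutoff $j\geq\ceil{\exp(d^{\,t}\sqrt\beta)}+1$. The chief cosmetic difference is that you parametrize by solving for $\epsilon$ given $j$ (via $\epsilon_*$) whereas the paper solves for $j$ given $\epsilon$ (via $j=\floor{\exp(\beta(\epsilon^{-s}+d^{\,t}))}+1$); your ``clause'' reformulation packages the limit a bit more cleanly, and your handling of the degenerate case $\epsilon_*\geq\sqrt{a_{d,1}}$ correctly absorbs the absolute-criterion subtlety, but these are presentational rather than substantive departures.
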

As usual, here and in what follows $\ln^{2/s}\! j$ means $\left[\ln(j)\right]^{2/s}$ and $\ceil{x}$ denotes the smallest natural number larger than or equal to $x>0$. Furthermore, $\floor{y}$ is the largest integer smaller than or equal to $y \geq 0$ and we use $(z)_+$ as a shorthand for the maximum of $z\in\R$ and zero.

\begin{remark}
Note that if we set $s:=t:=1$ and $\overline{\beta}:=1/2$, then \autoref{thm:general} coincides with the characterization of weak tractability stated in \cite[Theorem~5.3]{NW08}. 
Moreover, the proof given below shows that we have uniform weak tractability if and only if \link{cond:1} and \link{cond:2} hold for every $s=t>0$ and some fixed $0<\overline{\beta}<1$. 
Hence, we also generalized \cite[Theorem~4.1]{Xu14} in which $\overline{\beta}=1/2$.
\end{remark}

\begin{proof}[Proof of \autoref{thm:general}]
The proof can be derived using essentially the same arguments as exploited in \cite{NW08} and \cite{Xu14}, respectively. However, at some points additional estimates are needed as we shall now explain.
To this end, let $s,t>0$, as well as $0<\overline{\beta}<1$, be fixed arbitrarily.

\emph{Step 1}. We show that $(s,t)$-weak tractability implies \link{cond:1}. 
From \link{def:st-weak} we infer that for all $\beta\in(0,\overline{\beta}]$ there exists a natural number $M_{s,t}(\beta)$ such that for all pairs $(\epsilon,d)\in (0,1)\times \N$ with $\epsilon< \epsilon_d^{\mathrm{init}}$ and $\epsilon^{-1}+d\geq M_{s,t}(\beta)$ it holds
\begin{equation*}
	\frac{\ln n^{\mathrm{crit}}(\epsilon,S_d)}{\epsilon^{-s}+d^{\,t}}
	\leq \beta,
	\qquad \text{i.e.,} \qquad
	n^{\mathrm{crit}}(\epsilon,S_d) 
	\leq \floor{\exp\!\left(\beta\,(\epsilon^{-s}+d^{\,t})\right)}.
\end{equation*}
(W.l.o.g. we may assume that $M_{s,t}(\beta) > \min\{1,\epsilon_{d_0}^{\mathrm{init}}\}^{-1}+d_0$ for some fixed $d_0\in\N$.)
Using \link{eq:explicit_n} this yields that for all
\begin{equation}\label{eq:j}
	j \geq \floor{\exp\! \left(\beta\,(\epsilon^{-s}+d^{\,t})\right)} + 1
\end{equation}
it is $\lambda_{d,j} \leq \epsilon^2\, \mathrm{CRI}_d$.
In the case of equality in \link{eq:j} we clearly have $\epsilon^s \leq \beta / (\ln(j-1)-\beta\, d^{\,t})_+$. 
In conclusion, it holds
\begin{equation*}
	\frac{\lambda_{d,j}}{\mathrm{CRI}_d} 
	\leq \epsilon^2
	\leq \frac{\beta^{2/s}}{\left( \ln(j-1)-\beta \, d^{\,t} \right)_+^{2/s}}
	\qquad \text{for} \qquad 
	0<\beta\leq\overline{\beta}, 
	\qquad
	j = \floor{\exp\!\left( \beta\,(\epsilon^{-s}+d^{\,t}) \right)} + 1,
\end{equation*}
and all pairs $(\epsilon,d)\in (0, \min\{1,\epsilon_{d}^{\mathrm{init}}\}) \times \N$ for which $\epsilon^{-1}+d\geq M_{s,t}(\beta)$.
All such pairs obviously satisfy
\begin{equation*}
	\epsilon^{-s}+d^{\,t} \leq c_{s,t} \, \left( \epsilon^{-1}+d \right)^{\max\{s,t\}},
\end{equation*}
where the constant $c_{s,t}\geq 1$ does not depend on $\epsilon$ and $d$.
Now, for arbitrarily fixed $\beta$ and $d$, we let $\epsilon$ vary in the interval $(0, \min\{1,\epsilon_{d}^{\mathrm{init}}\})$. Then the left-hand side of the last inequality can be arbitrary large 
while the smallest value of the right-hand side is given by $c_{s,t} \, M_{s,t}(\beta)^{\max\{s,t\}}$.
This finally yields
\begin{equation}\label{ineq:bound_lambda}
	\frac{\lambda_{d,j}}{\mathrm{CRI}_d} 
	\leq \frac{\beta^{2/s}}{ \left( \ln(j-1)-\beta \, d^{\,t} \right)_+^{2/s}}
	\qquad \text{at least for all} \qquad
	j > \floor{ \exp\! \left(\beta\,c_{s,t} \, M_{s,t}(\beta) ^{\max\{s,t\}} \right)}.
\end{equation}

Using \link{ineq:bound_lambda} we deduce condition \link{cond:1} as follows:
For each natural number $j$ larger than some $j_0=j_0(\overline{\beta},d,t) \geq 3$ we have that $\ln(j-1)>2\, \overline{\beta}\,d^{\,t}$ which clearly implies
\begin{equation*}
	\ln(j-1)-\beta\,d^{\,t} 
	> \frac{1}{4} \ln j 
	> 0, 
	\qquad \text{i.e.,} \qquad
	\frac{1}{\left( \ln(j-1)-\beta \,d^{\,t} \right)_+^{2/s}} 
	< \frac{4^{2/s}}{\ln^{2/s}\! j}.
\end{equation*}
Consequently, for all $j > \max\left\{ j_0, \floor{\exp\!\left( \beta\,c_{s,t} \, M_{s,t}(\beta)^{\max\{s,t\}} \right) } \right\}$ it holds
\begin{equation*}
	\frac{\lambda_{d,j}}{\mathrm{CRI}_d} \, \ln^{2/s}\! j
	\leq (4\,\beta)^{2s}.
\end{equation*}
Since $\beta\in(0,\overline{\beta}]$ was arbitrary, this shows \link{cond:1}.

To verify also \link{cond:2}, let $f_{s,t} \colon (0,\overline{\beta}]\nach\N$ be defined by $f_{s,t}(\beta):=\ceil{\left( c_{s,t} \, M_{s,t}(\beta)^{\max\{s,t\}} \right)^{1/t} }$.
Then for all $0<\beta\leq\overline{\beta}<1$ and each $d\in\N$ with $d\geq f_{s,t}(\beta)$ we naturally have
\begin{equation}\label{ineq:dt}
	d^{\,t} \geq f_{s,t}(\beta)^t \geq c_{s,t} \, M_{s,t}(\beta)^{\max\{s,t\}}.
\end{equation}
Further note that for $j\geq \ceil{ \exp\! \left( d^{\,t} \, \sqrt{\beta} \right) }+1 \geq 3$ it is $\ln(j-1) \geq d^{\,t} \, \sqrt{\beta}$ which shows
\begin{equation}\label{ineq:lnj}
	\ln(j-1)-\beta \, d^{\,t} 
	\geq \left( 1-\sqrt{\overline{\beta}} \right) \, \ln(j-1) > 0.
\end{equation}
From \link{ineq:dt} and the fact that $\sqrt{\beta}\geq \beta$ we moreover conclude that every such $j$ is admissible for \link{ineq:bound_lambda}.
Combining \link{ineq:bound_lambda} with \link{ineq:lnj} now proves that
\begin{align*}
	\frac{1}{\beta^{2/s}} \, \frac{\lambda_{d,j}}{\mathrm{CRI}_d} \, \ln^{2/s}\! j
	\leq \frac{	\ln^{2/s}\! j}{ \left( \ln(j-1)-\beta \, d^{\,t} \right)^{2/s}}
	\leq \left( \frac{\ln j}{\ln (j-1)} \right)^{2/s} \frac{1}{\left( 1- \sqrt{\overline{\beta}} \right)^{2/s}}
	\leq \left( \frac{2}{1- \sqrt{\overline{\beta}}} \right)^{2/s}
\end{align*}
is uniformly bounded for all $\beta \in ( 0, \overline{\beta}]$, each $d\geq f_{s,t}(\beta)$, and all $j\geq \ceil{ \exp\! \left( d^{\,t} \, \sqrt{\beta} \right) }+1$.
In other words, we have shown that $L_{s,t}$ defined in \link{cond:2} is finite, as claimed.

\emph{Step 2.} 
We are left with proving the converse implication, i.e., that the conditions \link{cond:1} and \link{cond:2} together imply $(s,t)$-weak tractability of $S$.
For this purpose let $\beta\in(0,\overline{\beta}]$ be fixed. Then \link{cond:1} ensures the existence of some integer $J_{s,t}(\beta)>2$ such that for all $j\geq J_{s,t}(\beta)$ and each $d\in\{1,2,\ldots,f_{s,t}(\beta)-1\}$ it holds
\begin{equation*}
	\frac{\lambda_{d,j}}{\mathrm{CRI}_d} \, \ln^{2/s}\! j \leq \beta.
\end{equation*}
Since for $\epsilon>0$ the condition $\beta/\ln^{2/s}\! j \leq \epsilon^2$ is equivalent to $j\geq \ceil{ \exp\!\left( \beta^{s/2} \, \epsilon^{-s} \right)}$, we conclude
\begin{equation*}
	\lambda_{d,j} \leq \epsilon^2 \, \mathrm{CRI}_d
	\quad \text{for all} \quad
	d<f_{s,t}(\beta), \quad \epsilon>0, \quad \text{and} \quad j \geq \max\!\left\{ J_{s,t}(\beta), \, \ceil{ \exp\!\left( \beta^{s/2} \, \epsilon^{-s} \right)} \right\}.
\end{equation*}
Hence, $n^{\mathrm{crit}}(\epsilon,S_d) \leq \max\!\left\{ J_{s,t}(\beta), \, \exp\!\left( \beta^{s/2} \, \epsilon^{-s} \right) \right\}$ for these $d$ and $\epsilon$.

If $d\geq f_{s,t}(\beta)$, then condition \link{cond:2} yields that for all $j\geq \ceil{ \exp\!\left( d^{\,t} \sqrt{\beta} \right)}+1$
\begin{equation*}
	\frac{1}{\beta^{2/s}} \, \frac{\lambda_{d,j}}{\mathrm{CRI}_d} \, \ln^{2/s}\! j	
	\leq L_{s,t}.
\end{equation*}
Additionally, given $\epsilon>0$ we note that in this case $\beta^{2/s} \, L_{s,t} / \ln^{2/s}\! j\leq \epsilon^2$ holds if and only if $j\geq \ceil{ \exp\!\left( \beta \, L_{s,t}^{s/2} \, \epsilon^{-s} \right)}$. 
Hence, for all $d \geq f_{s,t}(\beta)$ and $\epsilon>0$,
we obtain
\begin{equation*}
	\lambda_{d,j} 
	\leq \epsilon^2 \, \mathrm{CRI}_d
	\qquad \text{if} \qquad
	j \geq \max\! \left\{ \ceil{ \exp\!\left( d^{\,t} \sqrt{\beta} \right)}+1, \, \ceil{ \exp\!\left( \beta \, L_{s,t}^{s/2} \, \epsilon^{-s} \right)} \right\}
\end{equation*}
and therefore $n^{\mathrm{crit}}(\epsilon,S_d) \leq \max\! \left\{ 2\,\exp\! \left( d^{\,t}\, \sqrt{\beta} \right), \,\exp\!\left( \beta \, L_{s,t}^{s/2} \, \epsilon^{-s} \right) \right\}$,
where we used the estimate $\ceil{x} \leq x +1 \leq 2\, x$ which holds for $x\geq 1$.

Combining both the estimates on the information complexity $n^{\mathrm{crit}}(\epsilon,S_d)$, we see that for all $d\in\N$, every $\epsilon \in (0,1)$, and all $\beta\in(0,\overline{\beta}]$ it is
\begin{equation}\label{ineq:max}
	\frac{\ln n^{\mathrm{crit}}(\epsilon,S_d)}{\epsilon^{-s}+d^{\,t}} 
	\leq \begin{cases}
		\displaystyle\max\! \left\{ \frac{\ln J_{s,t}(\beta)}{\epsilon^{-s}+d^{\,t}}, \, \beta^{s/2} \right\} 
			&\quad \text{if} \quad d< f_{s,t}(\beta), \\
		\displaystyle\max\! \left\{ \sqrt{\beta} + \frac{\ln 2}{\epsilon^{-s}+d^{\,t}}, \, \beta \, L_{s,t}^{s/2} \right\} 
			&\quad \text{if} \quad d\geq f_{s,t}(\beta).
	\end{cases}
\end{equation}
Note that there exists a constant $c'_{s,t}>0$ such that for all $\epsilon$ and $d$ we have
\begin{equation*}
	\epsilon^{-s}+d^{\,t} \geq c'_{s,t} \, \left( \epsilon^{-1}+d \right)^{\min\{s,t\}}.
\end{equation*}
Hence, for all $\delta>0$ we find $\beta=\beta(\delta)\in(0,\overline{\beta}]$ and $N_{s,t}(\delta)\in\N$ which ensure that both the maxima in~\link{ineq:max} are less than $\delta$ for all admissible pairs $(\epsilon,d)$ with $\epsilon^{-1}+d\geq N_{s,t}(\delta)$.
In other words, we have shown \link{def:st-weak} and thus $S$ is $(s,t)$-weakly tractable which completes the proof.
\end{proof}

\subsubsection{Limiting case}
We turn to the limiting case of $(s,t)$-weak tractability in which $s$ and/or $t$ equal zero. 
In these cases the situation is completely different compared to \autoref{thm:general}, as the following theorem shows:

\begin{theorem}\label{thm:general0}
Let $S=(S_d)_{d\in\N}$ be defined as above and consider the worst case setting w.r.t.\ the absolute or normalized error criterion for the class $\Lambda^{\mathrm{all}}$. Then
\begin{itemize}
	\item $S$ is $(0,t)$-weakly tractable with $t\geq 0$ if and only if
		\begin{equation}\label{eq:trivial_lambda}
			\lambda_{d,2}=\lambda_{d,3}=\ldots=0	\qquad \text{for all} \quad d\in\N.
		\end{equation}
	\item $S$ is $(s,0)$-weakly tractable with $s>0$ if and only if the following two conditions hold:
		\begin{enumerate}[label=(C\arabic{*}), ref=C\arabic{*}, start=3]
		\item\label{cond:3} $\displaystyle\lim_{d\to\infty} \frac{\lambda_{d,2}}{\mathrm{CRI}_d} = 0$,
		\item\label{cond:4} For all $0<\overline{\beta}<1$ there exists a function $g_{s}\colon (0,\overline{\beta}]\nach \N$ such that
		\begin{equation*}
			L_{s}
			:= \sup_{\beta\in (0,\overline{\beta}]} \frac{1}{\beta^{2/s}} \sup_{d\in\N} \sup_{\substack{j\in\N,\\j\geq g_s(\beta)}} \frac{\lambda_{d,j}}{\mathrm{CRI}_d} \, \ln^{2/s}\! j < \infty.
		\end{equation*}
	\end{enumerate}		
\end{itemize}
\end{theorem}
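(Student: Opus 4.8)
I would handle the two bullets separately, in each case reading off $n^{\mathrm{crit}}(\epsilon,S_d)$ from the singular values via \link{eq:explicit_n}. For the first bullet, sufficiency of \link{eq:trivial_lambda} is immediate: if $\lambda_{d,2}=0$ for every $d$, then $\lambda_{d,2}\le\epsilon^2\,\mathrm{CRI}_d$ always, so \link{eq:explicit_n} gives $n^{\mathrm{crit}}(\epsilon,S_d)\le1$ for all admissible $(\epsilon,d)$; hence the numerator in \link{def:st-weak} vanishes and $S$ is $(s,t)$-weakly tractable for \emph{every} $s,t\ge0$, in particular for $s=0$ and arbitrary $t\ge0$. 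For necessity, fix $d_0\in\N$ and a sequence $\epsilon_k\downarrow0$ with $\epsilon_k<\min\{1,\epsilon_{d_0}^{\mathrm{init}}\}$; then $\epsilon_k^{-1}+d_0\nach\infty$, whereas the denominator $\epsilon_k^{-s}+d_0^{\,t}=1+d_0^{\,t}$ stays constant because $s=0$. Thus $(0,t)$-weak tractability forces $\ln n^{\mathrm{crit}}(\epsilon_k,S_{d_0})\nach0$, so $n^{\mathrm{crit}}(\epsilon_k,S_{d_0})=1$ for large $k$ and, by monotonicity of $n^{\mathrm{crit}}$ in the accuracy, for \emph{all} $k$; by \link{eq:explicit_n} this means $\lambda_{d_0,2}\le\epsilon_k^2\,\mathrm{CRI}_{d_0}$ for every $k$, and letting $k\nach\infty$ gives $\lambda_{d_0,2}=0$. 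The non-increasing ordering of $\lambda^{(d_0)}$ then yields \link{eq:trivial_lambda}. (This essentially repeats the first paragraph of \autoref{rem:parameter}.)

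For the second bullet ($s>0$) I would rerun the proof of \autoref{thm:general} with $d^{\,t}$ replaced by $d^{\,0}=1$ throughout; the decisive new feature is that the denominator $\epsilon^{-s}+1$ no longer grows with $d$, which both creates the need for \link{cond:3} and forces the suprema in \link{cond:4} to run over \emph{all} $d\in\N$ with a threshold $g_s(\beta)$ that is independent of $d$. Necessity of \link{cond:3}: along admissible pairs with $\epsilon=\epsilon_0$ fixed and $d\nach\infty$ the denominator is constant, so $(s,0)$-weak tractability forces $n^{\mathrm{crit}}(\epsilon_0,S_d)=1$ for all large $d$, i.e.\ $\lambda_{d,2}/\mathrm{CRI}_d\le\epsilon_0^2$ eventually; since $\epsilon_0\in(0,1)$ is arbitrary (in the absolute case one first removes those $d$ with $\lambda_{d,1}\le\epsilon_0^2$, where $\lambda_{d,2}\le\lambda_{d,1}\le\epsilon_0^2$ anyway), \link{cond:3} follows. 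Necessity of \link{cond:4}: copy Step~1 of the proof of \autoref{thm:general} with $t=0$, obtaining first $n^{\mathrm{crit}}(\epsilon,S_d)\le\floor{\exp(\beta\,(\epsilon^{-s}+1))}$ for admissible $(\epsilon,d)$ with $\epsilon^{-1}+d\ge M_s(\beta)$, then, via \link{eq:explicit_n} and letting $\epsilon\downarrow0$ so that $j=\floor{\exp(\beta\,(\epsilon^{-s}+1))}+1$ sweeps all sufficiently large integers, the bound $\lambda_{d,j}/\mathrm{CRI}_d\le\beta^{2/s}\,(\ln(j-1)-\beta)_+^{-2/s}$, and finally, since $\ln(j-1)-\beta\ge\tfrac12\ln j$ for $j$ large, $\lambda_{d,j}\,\ln^{2/s}\!j/\mathrm{CRI}_d\le(2\beta)^{2/s}$, whence $L_s\le2^{2/s}<\infty$. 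Sufficiency: given $\delta>0$, choose $\beta=\beta(\delta)\le\overline\beta$ with $L_s^{s/2}\beta<\delta$; then \link{cond:4} gives $n^{\mathrm{crit}}(\epsilon,S_d)\le\max\{g_s(\beta),\exp(\beta\,L_s^{s/2}\epsilon^{-s})\}$ for all $d$ and all $\epsilon>0$, which already makes $\ln n^{\mathrm{crit}}(\epsilon,S_d)/(\epsilon^{-s}+1)<\delta$ whenever $\epsilon^{-1}\ge T(\delta)$ (so the $\exp$-term dominates), while for $\epsilon>1/T(\delta)$ condition \link{cond:3} yields $n^{\mathrm{crit}}(\epsilon,S_d)=1$ for all $d\ge d^*(\delta)$; the remaining pairs have $\epsilon^{-1}+d$ bounded and are therefore irrelevant to the limit \link{def:st-weak}.

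The main obstacle is the necessity of \link{cond:4} --- concretely, making $g_s(\beta)$ genuinely $d$-independent. For the normalized criterion, and for the cofinitely many $d$ with $\lambda_{d,1}\ge1$ in the absolute criterion, the sweep above is routine. For the absolute criterion one must additionally control the short index range $2\le j<g_s(\beta)$ uniformly over those $d$ with $\lambda_{d,1}<1$: here the indices $j$ not realizable by a valid $\epsilon$ lie in $j\lesssim\exp(\beta\,\lambda_{d,1}^{-s/2})$, so $\ln^{2/s}\!j\lesssim\beta^{2/s}\,\lambda_{d,1}^{-1}$, and hence $\lambda_{d,j}\,\ln^{2/s}\!j/\mathrm{CRI}_d\le\lambda_{d,1}\cdot\beta^{2/s}\,\lambda_{d,1}^{-1}$ stays bounded by a constant multiple of $\beta^{2/s}$. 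The finitely many $d<M_s(\beta)$ are harmless, since for them the relevant threshold is a (possibly large but) finite number that can simply be absorbed into $g_s(\beta)$.
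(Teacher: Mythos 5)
Your proof is correct and follows essentially the same route as the paper's: the first bullet and condition~\link{cond:3} via \autoref{rem:parameter} plus \link{eq:explicit_n}, and condition~\link{cond:4} by re-running Step~1 of the proof of \autoref{thm:general} with $t=0$ (as the paper does explicitly). The one place you add genuine value is the explicit treatment of the $d$-uniformity of $g_s(\beta)$ in the absolute case with $\lambda_{d,1}<1$, which the paper passes over by simply citing \link{ineq:bound_lambda}; your trivial bound $\lambda_{d,j}\le\lambda_{d,1}$ combined with $\ln^{2/s}\!j\lesssim\beta^{2/s}\lambda_{d,1}^{-1}$ for the ``unreachable'' $j$ is exactly the observation needed to see that \link{ineq:bound_lambda} with $t=0$ indeed holds there (the right-hand side of \link{ineq:bound_lambda} is automatically $\ge\lambda_{d,1}$ on that range), so the two proofs are not really distinct — only the phrase ``for the cofinitely many $d$ with $\lambda_{d,1}\geq1$'' is imprecise, since that set need not be cofinite, but your argument never uses cofiniteness.
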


\begin{remark}
Observe that \link{cond:4} actually implies that the convergence described by \link{cond:1} takes place uniformly in $d$.
\end{remark}

\begin{proof}[Proof of \autoref{thm:general0}]
\emph{Step 1.}
If $0=s\leq t$, then formula \link{def:st-weak} yields that $n^{\mathrm{crit}}(\epsilon,S_d)=1$ for each $d\in\N$ and every $\epsilon \in (0,\min\{1,\e_d^{\mathrm{init}}\})$; see \autoref{rem:parameter}. 
From \link{eq:explicit_n} we thus conclude the necessity of \link{eq:trivial_lambda}, but of course this condition is also sufficient for $(0,t)$-weak tractability with $t\geq 0$.

\emph{Step 2.}
Let us turn to $(s,0)$-weak tractability of $S$ with $s>0$.
If $S$ is $(s,0)$-weakly tractable, then 
\autoref{rem:parameter} yields that $n^{\mathrm{crit}}(\e,S_d) \leq 1$ for all $\e>0$ and every $d\geq d^*(\e)$. 
Hence, formula \link{eq:explicit_n} implies
\begin{equation*}
	\frac{\lambda_{d,2}}{\mathrm{CRI}_d} \leq \e
\end{equation*}
for all these $\e$ and $d$. Letting $\e$ tend to zero thus proves the limit condition \link{cond:3}.
To show also \link{cond:4} we can proceed as in Step 1 of the proof of \autoref{thm:general} to derive \link{ineq:bound_lambda} with $t=0$ for each $d\in\N$ and all $0<\beta \leq \overline{\beta}$. 
Under the additional assumption that $j\geq \ceil{\exp\!\left( \sqrt{\beta} \right)}$ we moreover have \link{ineq:lnj} with $t=0$, so that uniformly in $d$ and $\beta$ it holds
\begin{equation*}
	\frac{1}{\beta^{2/s}} \, \frac{\lambda_{d,j}}{\mathrm{CRI}_d} \, \ln^{2/s}\! j
	\leq \left( \frac{2}{1- \sqrt{\overline{\beta}}} \right)^{2/s},
\end{equation*}
provided that
\begin{equation*}
	j \geq g_s(\beta):= \max\left\{ \floor{ \exp\! \left(\beta\,c_{s,0} \, M_{s,0}(\beta)^s \right)}+1, \ceil{\exp\!\left( \sqrt{\beta} \right)} \right\}.
\end{equation*}
This proves the necessity of \link{cond:4}.

It remains to show sufficiency of \link{cond:3} and \link{cond:4} for $(s,0)$-weak tractability of $S$.
For this purpose, we argue similar to Step 2 in the proof of \autoref{thm:general0}:
From \link{cond:4} it follows that 
\begin{equation*}
	\lambda_{d,j} 
	\leq L_s\, \frac{\beta^{2/s}}{\ln^{2/s}\! j} \, \mathrm{CRI}_d
	\qquad \text{if} \quad j\geq g_s(\beta)
\end{equation*}
and, given $\epsilon>0$, we again have $L_{s} \, \beta^{2/s} / \ln^{2/s}\! j \leq \epsilon^2$ if and only if $j\geq \ceil{ \exp\!\left( \beta \, L_{s}^{s/2} \, \epsilon^{-s} \right)}$. 
Therefore we conclude that if $j$ satisfies both requirements, then $\lambda_{d,j} \leq \e^2 \, \mathrm{CRI}_d$ and consequently
\begin{equation}\label{est:n_final}
	\frac{\ln n^{\mathrm{crit}}(\epsilon,S_d)}{\epsilon^{-s}+d^{0}} 
	\leq \frac{\ln \max\! \left\{ g_s(\beta)-1, \, \exp\!\left(\beta \, L_s^{s/2} \e^{-s}\right) \right\} }{\epsilon^{-s}+1}
	\leq \max\! \left\{ \frac{\ln g_s(\beta)}{\epsilon^{-s}+1}, \, \beta \, L_s^{s/2} \right\}
\end{equation}
for all $\e>0$, $d\in\N$, and $0<\beta\leq \overline{\beta}$.
Now let us take any admissible double sequence $((\epsilon_k,d_k))_{k\in\N}$ with $\epsilon_k^{-1}+d_k\nach\infty$, as $k$ approaches infinity. 
Given $\delta>0$ we can choose $\beta$ as well as $c>0$ small enough such that both entries of the maximum in \link{est:n_final} are smaller than $\delta$ for all pairs $(\e,d)=(\e_k,d_k)$ with $k\in\N$ and $\e_k \leq c$.
It might happen that there remains a subsequence with $\epsilon_{k_\ell} > c$ for all $\ell\in\N$. Then $\epsilon_{k_\ell}^{-1}+d_{k_\ell} \nach \infty$ does not imply $\epsilon_{k_\ell}^{-s}+1 \nach \infty$. However, in this case $d_{k_\ell}\to\infty$, as $\ell\to\infty$, so that we can use the condition \link{cond:3} to obtain
\begin{equation*}
	\ln n^{\mathrm{crit}}(\e_{k_\ell},S_{d_{k_\ell}}) 
	\leq \ln n^{\mathrm{crit}}(c,S_{d_{k_\ell}})
	\leq \ln 1 = 0,
\end{equation*}
provided that $\ell$ is sufficiently large.
In conclusion, for all $\delta>0$ we find $k_0(\delta)\in\N$ so that
\begin{equation*}
	\frac{\ln n^{\mathrm{crit}}(\epsilon_k,S_{d_k})}{\epsilon_k^{-s}+d_k^{0}}<\delta
	\qquad \text{for all} \quad k \geq k_0(\delta).
\end{equation*}
This shows $(s,0)$-weak tractability and hence completes the proof.
\end{proof}

\subsection{Linear tensor product problems}\label{subsect:tensor}
Linear tensor product problems are important special cases of general Hilbert space problems discussed in the previous \autoref{subsect:general}.
Here the sequence of problem instances $S=(S_d)_{d\in\N}$ is generated by taking $d$-fold tensor products of some (univariate) compact linear operator $S_1\colon H_1 \rightarrow G_1$ between Hilbert spaces $H_1$ and $G_1$. That is, we set
\begin{equation*}
	S_d:=\bigotimes_{\ell=1}^d S_1 \colon H_d \rightarrow G_d,
	\qquad \text{where} \quad 
	H_d:=\bigotimes_{\ell=1}^d H_1
	\quad \text{and} \quad 
	G_d:=\bigotimes_{\ell=1}^d G_1
	\quad \text{for every} \quad 
	d\in\mathbb{N}.
\end{equation*}
As before we restrict ourselves to problems 
for which $H_1$ and thus all source spaces $H_d$ are infinite dimensional and separable. 

By $\left((\la_j,\eta_j)\right)_{j\in\mathbb{N}}$ we denote the sequence of eigenpairs of $W_1:=S_1^{*}\circ S_1\colon H_1\rightarrow H_1$ arranged w.r.t.\ a non-increasing ordering of the eigenvalues $\lambda_j$.
Once more we obviously have $\lambda_j\geq 0$ and $\lim_{j\rightarrow\infty}\lambda_j=0$ since $W_1$ is a positive semi-definite, self-adjoint, and compact operator between Hilbert spaces. To avoid triviality we again assume that $\lambda_1>0$. For $d\ge1$ we set
$$
W_d:=S_d^* \circ S_d \colon H_d\rightarrow H_d.
$$
Then, due to the imposed tensor product structure of $S_d$ and $H_d$, the set of eigenpairs of $W_d$ is given by
\begin{equation*}
	\left\{ (\lambda_{d,j},\eta_{d,j}) = \left( \prod_{\ell=1}^d \lambda_{j_\ell}, \bigotimes_{\ell=1}^d \eta_{j_\ell} \right) \sep j=(j_1,\ldots,j_d)\in\mathbb{N}^d \right\}
\end{equation*}
In particular, we have $\lambda_{d,(1,\ldots,1)}=\lambda_1^d = \norm{S_1}^{2d}=\norm{S_d}^{2}>0$ and thus $\e_d^{\mathrm{init}}=\lambda_1^{d/2}$ when dealing with the worst case setting.
Moreover, it is well-known that
\begin{equation}\label{eq:complexity}
	n^{\mathrm{crit}}(\e,S_d)
	= \#\left\{ j\in\mathbb{N}^d \sep \lambda_{d,j} > \e^2\,{\rm CRI}_d \right\}, \qquad \mathrm{crit} \in \{\mathrm{abs}, \, \mathrm{norm}\},
\end{equation}
where again ${\rm CRI}_d:=1$ for the absolute, and
${\rm CRI}_d:=\lambda_{1}^d$ for the normalized error criterion; cf.\ \link{def:CRI}.

In what follows we significantly extend the characterization of weak tractability for linear tensor product problems (as it can be found, e.g., in \cite[Theorem 5.5]{NW08} and \cite{PapPet09}) to the case of $(s,t)$-weak tractability.
For this purpose, we first derive conditions which are necessary and sufficient for the limiting case $\min\{s,t\}=0$. Afterwards we give characterizations for the remaining non-limiting cases.

\subsubsection{Limiting case}
Our first theorem for linear tensor product problems in the above sense characterizes $(s,t)$-weak tractability with $s=0$.
\begin{theorem}\label{twierdzonko}
	Let $S=(S_d)_{d\in\N}$ be a linear tensor product problem and consider the worst case setting  for the class $\Lambda^{\mathrm{all}}$. 
	Then the following assertions are equivalent:
	\begin{itemize}
			\item For $t\geq 0$ the problem $S$ is $(0,t)$-weakly tractable w.r.t.\ the absolute error criterion.
			\item For $t\geq 0$ the problem $S$ is $(0,t)$-weakly tractable w.r.t.\ the normalized error criterion.
			\item $\lambda_2=\lambda_3=\ldots=0$.
			\item Every problem instance $S_d$ can be solved exactly using only one piece of information.
	\end{itemize}	
\end{theorem}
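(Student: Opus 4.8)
The plan is to reduce the whole statement to \autoref{thm:general0} combined with the tensor product structure of the eigenvalues, so that essentially no work beyond bookkeeping is required.

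First I would invoke the first bullet of \autoref{thm:general0} for the general Hilbert space problem $S=(S_d)_{d\in\N}$ underlying the present tensor product problem: it states that $S$ is $(0,t)$-weakly tractable with $t\geq 0$ --- for the absolute \emph{or} the normalized error criterion --- if and only if $\lambda_{d,2}=\lambda_{d,3}=\ldots=0$ for every $d\in\N$. Since this eigenvalue condition does not involve $\mathrm{CRI}_d$ at all, the first two assertions of the theorem are automatically equivalent to each other and to that condition; it only remains to translate it into a statement about the univariate eigenvalues $(\lambda_j)_{j\in\N}$ and to connect it with the fourth assertion.

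Second, I would use the explicit description of the eigenpairs of $W_d$. Because $\lambda_1\geq\lambda_2\geq\ldots\geq0$ with $\lambda_1>0$, the largest eigenvalue of $W_d$ is $\lambda_{d,1}=\lambda_1^{\,d}$, attained only at the multi-index $(1,\ldots,1)$, while the second largest equals $\lambda_1^{\,d-1}\lambda_2$: any multi-index $j\neq(1,\ldots,1)$ has some coordinate $j_\ell\geq2$, hence $\lambda_{d,j}\leq\lambda_1^{\,d-1}\lambda_2$, and this value is realised at $(2,1,\ldots,1)$. Therefore ``$\lambda_{d,2}=0$ for all $d$'' is equivalent (taking $d=1$) to $\lambda_2=0$, which by monotonicity of $(\lambda_j)_j$ forces $\lambda_2=\lambda_3=\ldots=0$; the converse is immediate, since $\lambda_2=\lambda_3=\ldots=0$ leaves $\lambda_1^{\,d}$ as the only nonzero eigenvalue of each $W_d$. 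This yields the equivalence of the third assertion with the eigenvalue condition of \autoref{thm:general0}.

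Third, I would settle the fourth assertion using $e(n,d)=\sqrt{\lambda_{d,n+1}}$ together with \link{eq:complexity}. If $\lambda_2=\lambda_3=\ldots=0$, then each $W_d$ has rank one, so $e(1,d)=\sqrt{\lambda_{d,2}}=0$ and the $1$st optimal algorithm (the $S_d$-image of the orthogonal projection onto the top eigenelement, which uses a single continuous linear functional) solves $S_d$ exactly. Conversely, if every $S_d$ admits an exact algorithm using one piece of information, then $e(1,d)=0$, i.e.\ $\lambda_{d,2}=0$, for all $d$, and the second step gives $\lambda_2=\lambda_3=\ldots=0$. Chaining these equivalences closes the loop among all four assertions. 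I do not expect a real obstacle here; the only points deserving care are the identification of the second largest eigenvalue of the $d$-fold tensor product and the observation that $\mathrm{CRI}_d$ drops out because the relevant condition is a pure rank-one statement --- which is exactly why, in this limiting regime, the absolute and normalized criteria coincide, in contrast to the non-limiting case treated afterwards.
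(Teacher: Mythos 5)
Your proof is correct and follows exactly the route the paper intends: the paper's own proof of this theorem is a single sentence deferring to \autoref{thm:general0} together with the product structure of the eigenvalues $\lambda_{d,j}$, which is precisely the argument you spell out. (A negligible imprecision: you write that $\lambda_1^d$ is attained \emph{only} at $(1,\ldots,1)$, which fails if $\lambda_1=\lambda_2$; but since your subsequent step only uses $\lambda_{d,j}\leq\lambda_1^{d-1}\lambda_2$ for every $j\neq(1,\ldots,1)$ and that this value is realized at $(2,1,\ldots,1)$, the identification $\lambda_{d,2}=\lambda_1^{d-1}\lambda_2$ stands either way.)
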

\begin{proof}
The proof can be derived easily from the corresponding result for general Hilbert space problems given in \autoref{thm:general0} and the product structure of the eigenvalues $\lambda_{d,j}$ of $W_d$.
\end{proof}

In order to present a characterization of $(s,t)$-weak tractability for the case $t=0$ two preliminary lemmata are needed:
\begin{lemma}\label{lemacik3}
	Let $S_1$ be defined as above. Then, for every $s>0$,
	\begin{equation*}
		\lim_{n\to\infty} \frac{\la_n}{\ln^{-2/s}\! n}=0
		\qquad \text{if and only if} \qquad 
		\lim_{\e\to 0} \frac{\ln n^{\mathrm{abs}}(\e,S_1)}{\e^{-s}}=0.
	\end{equation*}
\end{lemma}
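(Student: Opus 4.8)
The plan is to prove both implications by translating between the decay rate of the univariate eigenvalues $\la_n$ and the growth rate of the univariate information complexity $n^{\mathrm{abs}}(\e,S_1)$, using the explicit formula $n^{\mathrm{abs}}(\e,S_1)=\min\{n\in\N_0\sep \la_{n+1}\leq\e^2\}$ from \link{eq:explicit_n} (with $\mathrm{CRI}_1=1$ in the absolute case). The key observation is that the condition $\lim_{n\to\infty}\la_n\ln^{2/s}\! n=0$ is exactly the statement ``$\la_n=o(\ln^{-2/s}\! n)$'', i.e.\ for every $\delta>0$ there is $n_\delta$ with $\la_n\leq \delta\,\ln^{-2/s}\! n$ for all $n\geq n_\delta$, and similarly $\lim_{\e\to 0}\ln n^{\mathrm{abs}}(\e,S_1)/\e^{-s}=0$ means that for every $\delta>0$ there is $\e_\delta>0$ with $n^{\mathrm{abs}}(\e,S_1)\leq\exp(\delta\,\e^{-s})$ for all $\e\leq\e_\delta$. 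So both sides are ``smallness for all $\delta$'' statements, and the proof amounts to inverting the relation $\la_n\approx\e^2 \leftrightarrow n\approx\exp(c\,\e^{-s})$.

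For the forward direction I would fix $\delta>0$ and use the eigenvalue bound $\la_n\leq\delta^{s/2}\,\ln^{-2/s}\! n$ (valid for $n$ large, by scaling the given hypothesis) to show that whenever $n\geq\ceil{\exp(\delta^{1/2}\e^{-s})}$ one has $\la_n\leq\e^2$; hence $n^{\mathrm{abs}}(\e,S_1)\leq\ceil{\exp(\delta^{1/2}\e^{-s})}\leq 2\exp(\delta^{1/2}\e^{-s})$ for $\e$ small enough, which gives $\ln n^{\mathrm{abs}}(\e,S_1)/\e^{-s}\leq\delta^{1/2}+\e^s\ln 2\to\delta^{1/2}$ as $\e\to0$. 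Since $\delta$ was arbitrary this yields the complexity limit. For the converse I would fix $\delta>0$, use $n^{\mathrm{abs}}(\e,S_1)\leq\exp(\delta^{s/2}\e^{-s})$ for small $\e$, and run the equivalence backwards: set $\e^2=\la_n$ (or a suitable comparison value just above it) to conclude that $n\leq n^{\mathrm{abs}}(\sqrt{\la_n}/2,S_1)$ or similar forces $\ln n\lesssim \delta^{s/2}\,\la_n^{-s/2}$, i.e.\ $\la_n\leq c\,\delta\,\ln^{-2/s}\! n$ for $n$ large, and again let $\delta\to0$. One has to be a little careful with the off-by-one in the definition of $n^{\mathrm{abs}}$ and with the fact that $\e$ must be strictly less than $\e_1^{\mathrm{init}}=\la_1^{1/2}$, but these are handled by restricting to large $n$ (equivalently small $\e$) and absorbing constants.

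The main obstacle, as usual in such $o(\cdot)$-to-$o(\cdot)$ translations, is bookkeeping the quantifiers correctly: one must not confuse ``for all $\delta$ there exists a threshold'' with a single uniform bound, and the inversion of $\exp(c\,\e^{-s})$ interchanges the roles of the threshold in $n$ and the threshold in $\e$ in a way that needs the monotonicity of both $n\mapsto\la_n$ and $\e\mapsto n^{\mathrm{abs}}(\e,S_1)$. I expect the cleanest route is to prove the two contrapositives are equivalent, or simply to phrase everything in terms of the statement ``$\la_n\ln^{2/s}\! n\to 0$'' on one side and ``$\e^s\ln n^{\mathrm{abs}}(\e,S_1)\to 0$'' on the other, pick the $\delta$-threshold on the hypothesis side, and push it through the elementary identity $\{\,n\sep\la_n>\e^2\,\}$ versus $\{\,j\sep \delta\ln^{-2/s}\! j>\e^2\,\}=\{\,j< \exp((\delta/\e^2)^{s/2})\,\}$. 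This is essentially the univariate specialization of Step 1 and Step 2 of the proof of \autoref{thm:general} with $t$ formally absent, so no genuinely new idea is required beyond careful constant tracking.
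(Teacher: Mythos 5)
The paper's proof of this lemma is a one-line citation to the proofs of Lemmas 1 and 2 in \cite{Sie13}, so any self-contained argument will depart from the text. Your route --- passing through the explicit formula $n^{\mathrm{abs}}(\e,S_1)=\min\{m\in\N_0 \sep \la_{m+1}\leq\e^2\}$ and inverting the correspondence $\la_n\approx\e^2 \leftrightarrow n\approx\exp(c\,\e^{-s})$ in both directions --- is the right one, and your closing observation that this is the univariate specialization of Steps 1 and 2 of the proof of \autoref{thm:general} (with the $d$-threshold machinery stripped away) is accurate. What the citation buys the paper is brevity; what your direct argument buys is that the reader sees exactly how $o(\ln^{-2/s}n)$ decay and $o(\e^{-s})$ growth exchange under $n\leftrightarrow\exp(\e^{-s})$.

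One constant-tracking slip in the forward direction should be corrected. From $\la_n\leq\delta^{s/2}\ln^{-2/s}n$ and $n\geq\ceil{\exp(\delta^{1/2}\e^{-s})}$ you only get
\[
	\la_n\leq\delta^{s/2}\,\ln^{-2/s}n\leq\delta^{s/2}\cdot\delta^{-1/s}\,\e^2=\delta^{\,s/2-1/s}\,\e^2,
\]
which is $\leq\e^2$ for small $\delta$ only when $s\geq\sqrt{2}$; for $s<\sqrt{2}$ the exponent of $\delta$ is negative and the bound degrades. The clean choice is to take the hypothesis in the form $\la_n\leq\delta\ln^{-2/s}n$ (valid for $n\geq n_\delta$) together with the threshold $n\geq\ceil{\exp(\delta^{s/2}\e^{-s})}$; then $\ln^{-2/s}n\leq\delta^{-1}\e^2$ gives $\la_n\leq\e^2$ outright, and the limsup of $\ln n^{\mathrm{abs}}(\e,S_1)/\e^{-s}$ is bounded by $\delta^{s/2}$, which tends to $0$ as $\delta\to 0$. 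The converse as you sketch it is fine: for $\e$ slightly below $\sqrt{\la_n}$ (or $\e=\sqrt{\la_n}/2$) one has $n^{\mathrm{abs}}(\e,S_1)\geq n$, and compactness of $S_1$ gives $\la_n\to 0$, so $\e$ is eventually small enough to invoke the hypothesis and obtain $\la_n\ln^{2/s}n\lesssim\delta$ up to a harmless absolute constant.
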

\begin{proof}
This lemma follows immediately from the proofs of \cite[Lemma 1 and Lemma 2]{Sie13}. 
\end{proof}

While \autoref{lemacik3} relates the decay of the sequence $\lambda^{(1)}=(\lambda_j)_{j\in\N}$ to the growth behavior of the univariate information complexity $n^\mathrm{abs}(\e,S_1)$ as $\epsilon\to 0$, \autoref{lemacikgorny} below deals with an upper estimate for the multidimensional case. Its proof is based on combinatorial arguments similar to those used in \cite{PapPet09} and in the proof of \cite[Theorem~5.5]{NW08}, respectively.

\begin{lemma}\label{lemacikgorny}
Given a linear tensor product problem 
$S=(S_d)_{d\in\mathbb{N}}$ let $S'_1:=\frac{1}{\sqrt{\la_1}}S_1$. 
Then for all $d\in\N$ and $\epsilon\in(0,1]$ there holds
\begin{equation*}
	n^{\mathrm{abs}}(\e, S_d)
	\leq d!\cdot \prod_{\ell=1}^d n^{\mathrm{abs}}\!\left( \left( \frac{\e}{\lambda_1^{d/2}}\right)^{1/\ell},\, S'_1 \right).
\end{equation*}
\end{lemma}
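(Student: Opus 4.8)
\textbf{Proof plan for \autoref{lemacikgorny}.}

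The plan is to count, for fixed $d$ and $\epsilon\in(0,1]$, the multi-indices $j=(j_1,\dots,j_d)\in\N^d$ that contribute to $n^{\mathrm{abs}}(\epsilon,S_d)$ via \link{eq:complexity}, namely those with $\prod_{\ell=1}^d\lambda_{j_\ell} > \epsilon^2$, and to overcount them by a product of $d$ univariate counting functions multiplied by the combinatorial factor $d!$. First I would normalise: writing $\mu_j:=\lambda_j/\lambda_1$ so that $1=\mu_1\geq\mu_2\geq\dots\geq 0$ are the squared singular values of $S'_1$, the condition $\prod_\ell\lambda_{j_\ell}>\epsilon^2$ becomes $\prod_{\ell=1}^d\mu_{j_\ell} > (\epsilon/\lambda_1^{d/2})^2=:\delta^2$ with $\delta\in(0,1]$ (using $\epsilon\leq 1\leq\lambda_1^{d/2}$ if $\lambda_1\geq 1$, or in any case $\delta\le\lambda_1^{-d/2}\cdot 1$; the normalised eigenvalues are $\le1$ so only $\delta\le1$ matters — I should note $\lambda_1^{d/2}$ may be small, but then $n^{\mathrm{abs}}(\delta^{1/\ell},S'_1)$ is correspondingly large and the bound still holds). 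So it suffices to show
\begin{equation*}
	\#\!\left\{ j\in\N^d \sep \prod_{\ell=1}^d\mu_{j_\ell} > \delta^2 \right\}
	\leq d!\cdot\prod_{\ell=1}^d \#\!\left\{ k\in\N \sep \mu_k > \delta^{2/\ell} \right\}.
\end{equation*}

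The key combinatorial step is the symmetrisation argument as in \cite{PapPet09} and \cite[Theorem~5.5]{NW08}: since the quantity $\prod_\ell\mu_{j_\ell}$ is invariant under permutations of the coordinates of $j$, the set of admissible $j$ is a union of $S_d$-orbits, and every orbit contains at least one representative with sorted entries $j_1\le j_2\le\dots\le j_d$ (equivalently $\mu_{j_1}\ge\mu_{j_2}\ge\dots\ge\mu_{j_d}$), so the left-hand side is at most $d!$ times the number of \emph{sorted} admissible multi-indices. Now comes the crucial pointwise estimate: if $j$ is sorted and $\prod_{\ell=1}^d\mu_{j_\ell}>\delta^2$, then for each fixed $\ell\in\{1,\dots,d\}$ we have $\mu_{j_\ell}^{\ell}\geq \prod_{m=1}^{\ell}\mu_{j_m}\geq\prod_{m=1}^{d}\mu_{j_m}>\delta^2$ (the first inequality because $\mu_{j_1},\dots,\mu_{j_{\ell-1}}\ge\mu_{j_\ell}$, the second because the remaining factors $\mu_{j_{\ell+1}},\dots,\mu_{j_d}$ are $\le1$), hence $\mu_{j_\ell}>\delta^{2/\ell}$, i.e. $j_\ell\in\{k\in\N\sep\mu_k>\delta^{2/\ell}\}$. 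Therefore the coordinate-projection embeds the sorted admissible multi-indices into the Cartesian product $\prod_{\ell=1}^d\{k\sep\mu_k>\delta^{2/\ell}\}$, giving the displayed bound; translating back via $\#\{k\sep\mu_k>\delta^{2/\ell}\}=n^{\mathrm{abs}}(\delta^{1/\ell},S'_1)$ and $\delta=\epsilon/\lambda_1^{d/2}$ finishes the proof.

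I expect the main obstacle to be purely bookkeeping rather than conceptual: one must be careful that the univariate arguments $(\epsilon/\lambda_1^{d/2})^{1/\ell}$ can exceed $1$ when $\lambda_1<1$, in which case $n^{\mathrm{abs}}$ of an argument $>\|S'_1\|=1$ is zero and the product on the right collapses to $0$ — but then the pointwise estimate $\mu_{j_\ell}>\delta^{2/\ell}>1$ is impossible, so the left-hand set is also empty and the inequality holds trivially; so I would either restrict attention to the regime where all arguments are $\le 1$ or simply observe that the chain of inequalities above is valid verbatim regardless. A secondary subtlety is the handling of ties among the $\mu_k$ (which inflate orbit sizes to less than $d!$, so the $d!$ bound is still a valid overcount) and the case $\delta=1$, where both sides may be infinite or the left set empty depending on multiplicities of $\mu_1$; none of these affects the stated inequality. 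The genuinely important idea is the telescoping bound $\mu_{j_\ell}^\ell\ge\prod_{m=1}^\ell\mu_{j_m}>\delta^2$ for sorted indices, which is exactly what forces the $\ell$-th coordinate into the $\delta^{2/\ell}$-level set.
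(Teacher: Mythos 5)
Your overall strategy — symmetrize by sorting, then bound each coordinate of a sorted admissible multi-index via partial products — is exactly the paper's strategy, and the normalization to $\mu_j=\lambda_j/\lambda_1$, the $d!$ overcount, and the boundary cases you flag are all handled correctly. But the central ``telescoping'' inequality is stated the wrong way round, and as written the chain does not hold.

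Concretely: with your sorting convention $j_1\le j_2\le\dots\le j_d$, equivalently $\mu_{j_1}\ge\mu_{j_2}\ge\dots\ge\mu_{j_d}$, the factor $\mu_{j_\ell}$ is the \emph{smallest} among $\mu_{j_1},\dots,\mu_{j_\ell}$. Hence the observation ``$\mu_{j_1},\dots,\mu_{j_{\ell-1}}\ge\mu_{j_\ell}$'' yields $\prod_{m=1}^{\ell}\mu_{j_m}\ge\mu_{j_\ell}^{\ell}$, which is the \emph{reverse} of the claimed $\mu_{j_\ell}^{\ell}\ge\prod_{m=1}^{\ell}\mu_{j_m}$. (Try $\ell=2$, $\mu_{j_1}=0.9$, $\mu_{j_2}=0.5$: $\mu_{j_2}^2=0.25<0.45=\mu_{j_1}\mu_{j_2}$.) Your own parenthetical justification thus proves the opposite of what you assert, and the chain $\mu_{j_\ell}^\ell\ge\prod_{m=1}^\ell\mu_{j_m}>\delta^2$ collapses. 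There are two clean repairs, both of which recover the same final product bound. Either adopt the paper's sorting $j_{\sigma^*(1)}\ge j_{\sigma^*(2)}\ge\dots\ge j_{\sigma^*(d)}$, so that $\mu_{j_{\sigma^*(\ell)}}$ is the \emph{largest} among $\mu_{j_{\sigma^*(1)}},\dots,\mu_{j_{\sigma^*(\ell)}}$, whence $(\mu_{j_{\sigma^*(\ell)}})^{\ell}\ge\prod_{m=1}^{\ell}\mu_{j_{\sigma^*(m)}}\ge\prod_{m=1}^{d}\mu_{j_{\sigma^*(m)}}>\delta^2$ is correct. Or keep your sorting and use the \emph{tail} product: $\mu_{j_\ell}$ is the largest among $\mu_{j_\ell},\dots,\mu_{j_d}$, so $\mu_{j_\ell}^{\,d-\ell+1}\ge\prod_{m=\ell}^{d}\mu_{j_m}\ge\prod_{m=1}^{d}\mu_{j_m}>\delta^2$ (the middle step because $\prod_{m=1}^{\ell-1}\mu_{j_m}\le1$), giving $j_\ell\le n^{\mathrm{abs}}(\delta^{1/(d-\ell+1)},S'_1)$; re-indexing $\ell\mapsto d-\ell+1$ then produces the same factor $\prod_{\ell=1}^d n^{\mathrm{abs}}(\delta^{1/\ell},S'_1)$. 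With either fix the rest of your argument goes through unchanged.
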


\begin{proof}
Let $\epsilon$ and $d$ be fixed.
Obviously, the sequence of eigenvalues $(\la'_n)_{n\in\mathbb{N}}$ of the operator $W'_1:=(S'_1)^* \circ S'_1$ related to the modified (univariate) problem instance $S'_1$ satisfies 
\begin{equation}\label{eq:rescale}
	\lambda'_n 
	= \frac{\lambda_n}{\lambda_1} \in [0,1]
	\qquad \text{for every} \quad n\in\N.
\end{equation}
Therefore we can rewrite the information complexity \link{eq:complexity} of the original (multivariate) problem instance $S_d$ as
\begin{equation}\label{eq:n_modified}
	n^{\mathrm{abs}}(\e, S_d) 
	= \#\!\left\{j=(j_1,\ldots,j_d)\in\mathbb{N}^d \sep \la'_{j_1}\cdot\ldots\cdot\la'_{j_d} > \frac{\e^2}{\la_1^d} \right\}.
\end{equation}

Suppose that $(j_1,\ldots,j_d)\in\mathbb{N}^d$ satisfies 
\begin{equation}\label{est:admissible}
	\la'_{j_1} \cdot \ldots \cdot \la'_{j_d} > \frac{\e^2}{\la_1^d}
\end{equation}
and let $\sigma^{*}:\{1,\ldots,d\}\rightarrow\{1,\ldots,d\}$ denote a permutation such that 
$j_{\sigma^{*}(1)}\geq j_{\sigma^{*}(2)}\geq\ldots\geq j_{\sigma^{*}(d)}$.
If we set
\begin{equation*}
	j_{\max}(\ell):=j_{\sigma^{*}(\ell)}
\end{equation*} 
for $\ell=1,\ldots,d,$ then 
\begin{align*}
	(\la'_{j_{\max}(\ell)})^\ell
	\geq \la'_{j_{\max}(1)} \cdot \ldots \cdot \la'_{j_{\max}(\ell)} \cdot 1 \cdot \ldots \cdot 1 
	\geq \la'_{j_1} \cdot \ldots \cdot \la'_{j_d}.
\end{align*}
Hence, from \link{est:admissible} it follows
\begin{equation*}
	\la'_{j_{\max}(\ell)} > \left(\frac{\e^{2}}{\la_1^{d}}\right)^{1/\ell}
\end{equation*}
and thus, due to \link{eq:complexity} applied for $S'_1$,
\begin{equation*}
	j_{\max}(\ell)
	\leq n^{\mathrm{abs}}\!\left( \left(\e/\la_1^{d/2}\right)^{1/\ell}, \, S'_1 \right),
	\qquad \ell=1,\ldots,d.
\end{equation*}

Now let us define the sets
\begin{eqnarray*}
	A \!\!\!&:=&\!\!\! \bigtimes_{\ell=1}^d \left\{1,2,\ldots,n^{\mathrm{abs}}\!\left( \left(\e/\la_1^{d/2}\right)^{1/\ell}, \, S'_1 \right)\right\},\\
	B \!\!\!&:=&\!\!\! \left\{(l_1,\ldots,l_d)\in\mathbb{N}^d \sep (l_{\sigma(1)},\ldots,l_{\sigma(d)})\in A
\ \ \text{for some}\ \ \sigma\in\Sigma_d\right\},
\end{eqnarray*}
where $\Sigma_d$ denotes the set of all permutations on $\{1,\ldots,d\}$.
Note that then
$$
	\left\{ j=(j_1,\ldots,j_d)\in\mathbb{N}^d \sep \la'_{j_1}\cdot\ldots\cdot\la'_{j_d} > \frac{\e^2}{\la_1^d} \right\}\subset B.
$$
Indeed, for every $j$ from this set the rearranged multiindex $(j_{\sigma^{*}(1)},\ldots,j_{\sigma^{*}(d)})$ belongs to $A$, i.e., by definition it is $j\in B$.
In conclusion, the representation \link{eq:n_modified} yields
\begin{equation*}
	n^{\mathrm{abs}}(\e,S_d)
	\leq \# B
	\leq d!\cdot \# A 
	= d!\cdot \prod_{\ell=1}^d n^{\mathrm{abs}}\!\left( \left(\e/\lambda_1^{d/2}\right)^{1/\ell},\, S'_1 \right),
\end{equation*}
as claimed.
\end{proof}

Now the characterization of $(s,0)$-weak tractability reads as follows:
\begin{theorem}\label{thm:tensor_t0}
	Let $S=(S_d)_{d\in\N}$ be a linear tensor product problem and consider the worst case setting for the class $\Lambda^{\mathrm{all}}$. Moreover, let $s>0$. Then
	\begin{itemize}
		\item $S$ is $(s,0)$-weakly tractable w.r.t.\ the normalized error criterion if and only if
		\begin{equation*}
			\lambda_2 = \lambda_3 = \ldots = 0.
		\end{equation*}
		\item $S$ is $(s,0)$-weakly tractable w.r.t.\ the absolute error criterion if and only if one of the following conditions applies:
			\begin{itemize}
				\item[1.)] $\lambda_2 = \lambda_3 = \ldots = 0$, or
				\item[2.)] $\lambda_1<1$ \quad and \quad $\displaystyle\lim_{j\to\infty} \frac{\lambda_j}{\ln^{-2/s}\! j}=0$.
			\end{itemize}
	\end{itemize}
\end{theorem}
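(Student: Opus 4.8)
The plan is to treat the normalized and absolute criteria separately, using the explicit complexity formula~\link{eq:complexity} together with the product structure of the eigenvalues of $W_d$. For the \emph{normalized} criterion, recall that $\mathrm{CRI}_d = \lambda_1^d$, so the counting condition in~\link{eq:complexity} becomes $\prod_{\ell=1}^d \lambda_{j_\ell} > \e^2 \lambda_1^d$, equivalently $\prod_{\ell=1}^d \lambda'_{j_\ell} > \e^2$ for the rescaled sequence $\lambda'_n = \lambda_n/\lambda_1 \in [0,1]$ from~\link{eq:rescale}. If $\lambda_2 = \lambda_3 = \ldots = 0$, then $\lambda'_1 = 1$ and all other $\lambda'_n$ vanish, so the only admissible multiindex is $(1,\ldots,1)$; hence $n^{\mathrm{norm}}(\e,S_d) \le 1$ for all $\e \in (0,1)$ and all $d$, which trivially gives $(s,0)$-weak tractability. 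Conversely, if $\lambda_2 > 0$, then for fixed $\e \in (0,1)$ every multiindex having exactly one coordinate equal to~$2$ and the rest equal to~$1$ is admissible once $(\lambda'_2)^{?} > \e^2$; more carefully, choosing $k = k(\e,d)$ coordinates equal to~$2$ is admissible as long as $(\lambda'_2)^k > \e^2$, i.e.\ $k < 2\ln(1/\e)/\ln(1/\lambda'_2)$, and counting these multiindices gives $n^{\mathrm{norm}}(\e,S_d) \ge \binom{d}{k}$ with $k$ fixed (independent of $d$), which grows polynomially in $d$ — but in fact taking $k$ proportional to $d$ is impossible since $k$ is bounded; so one instead fixes a small $\e$, takes $k$ of order $d$... wait, that fails. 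Instead: fix $k_0$ with $(\lambda'_2)^{k_0} > \e^2$ for a suitable small but fixed $\e$; then $n^{\mathrm{norm}}(\e,S_d) \ge \binom{d}{k_0} \ge (d/k_0)^{k_0}$, which is only polynomial. To get a contradiction with $(s,0)$-weak tractability one needs exponential growth in $d$: choose instead $\e = \e(d) \to 0$ slowly enough that the admissible index set grows exponentially — but the limit in~\link{def:st-weak} with $t=0$ is taken over sequences where $\e_k^{-1} + d_k \to \infty$, so a sequence with $\e_k$ fixed and $d_k \to \infty$ is admissible, and along it $\ln n^{\mathrm{norm}}(\e,S_{d_k})/(\e^{-s}+1)$ must tend to $0$; since the denominator is a fixed constant, this forces $n^{\mathrm{norm}}(\e,S_{d_k}) \to 1$, contradicting $\binom{d_k}{1} = d_k \to \infty$ once $\e < \lambda_2^{1/2}\lambda_1^{1/2}/\lambda_1^{1/2} = (\lambda'_2)^{1/2}$. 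This is exactly the mechanism from \autoref{rem:parameter}, and it yields the claim.

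For the \emph{absolute} criterion, condition~1.) is handled as above: if $\lambda_2 = \lambda_3 = \ldots = 0$ then $n^{\mathrm{abs}}(\e,S_d) = \#\{j : \lambda_1^d > \e^2\}$, which is either $0$ or $1$, so the problem is trivially $(s,0)$-weakly tractable. For the remaining case one splits according to whether $\lambda_1 \ge 1$ or $\lambda_1 < 1$. If $\lambda_1 \ge 1$ and $\lambda_2 > 0$, then $\lambda_{d,(2,1,\ldots,1)} = \lambda_2 \lambda_1^{d-1} \ge \lambda_2$, and more generally $\lambda_{d,j} \ge \lambda_2^{k}$ whenever $j$ has $k$ coordinates equal to $2$; fixing $\e$ small so that $\lambda_2^{k_0} > \e^2$ for some $k_0$ proportional to $d$ — here, since $\lambda_1 \ge 1$, even $\lambda_{d,(2,\ldots,2)} = \lambda_2^d$ may not exceed $\e^2$ if $\lambda_2 < 1$, so instead use $\lambda_{d,(2,\ldots,2,1,\ldots,1)}$ with the number of $1$'s large; the key point is $\lambda_1^{d} \ge 1 > \e^2$ already shows $n^{\mathrm{abs}}(\e,S_d) \ge 1$, and adding one coordinate equal to~$2$ keeps the product $\ge \lambda_2 \lambda_1^{d-1}$, which for $\lambda_1 \ge 1$ is $\ge \lambda_2$, hence exceeds $\e^2$ for $\e < \lambda_2^{1/2}$, giving $n^{\mathrm{abs}}(\e,S_d) \ge d+1$. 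Along the sequence $\e$ fixed, $d \to \infty$, this contradicts $(s,0)$-weak tractability exactly as before. So $\lambda_1 < 1$ is necessary. Given $\lambda_1 < 1$, one shows that $(s,0)$-weak tractability is equivalent to $\lim_{j\to\infty}\lambda_j \ln^{2/s} j = 0$: the decay condition is equivalent, by \autoref{lemacik3}, to $\ln n^{\mathrm{abs}}(\e,S'_1)/\e^{-s} \to 0$, and then \autoref{lemacikgorny} bounds $n^{\mathrm{abs}}(\e,S_d) \le d! \prod_{\ell=1}^d n^{\mathrm{abs}}((\e/\lambda_1^{d/2})^{1/\ell}, S'_1)$; taking logarithms, $\ln n^{\mathrm{abs}}(\e,S_d) \le \ln d! + \sum_{\ell=1}^d \ln n^{\mathrm{abs}}((\e \lambda_1^{-d/2})^{1/\ell}, S'_1)$, and using the $\e \to 0$ decay of $\ln n^{\mathrm{abs}}(\cdot,S'_1)$ one estimates each summand by $o\big((\e^{-1}\lambda_1^{d/2})^{s/\ell}\big)$; since $\lambda_1 < 1$ the factors $\lambda_1^{d/(2\ell)}$ are all $\le 1$ and the dominant term is $\ell = 1$, giving $\ln n^{\mathrm{abs}}(\e,S_d) = O(d\ln d) + o(\e^{-s})$, hence $\ln n^{\mathrm{abs}}(\e,S_d)/(\e^{-s}+1) \to 0$ along any admissible sequence (the $d\ln d$ term is killed since $d^0 = 1$ in the denominator only when $\e$ stays bounded, in which case one must check $n^{\mathrm{abs}}(\e,S_d)$ does not blow up exponentially in $d$ — but $\lambda_1 < 1$ forces $\lambda_{d,j} \le \lambda_1^d \to 0$, so for fixed $\e$ eventually $n^{\mathrm{abs}}(\e,S_d) = 0$, handled as in the proof of \autoref{thm:general0}). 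Conversely, if the decay fails, then already the univariate problem $S_1$ (viewed as $S_d$ with $d=1$) violates $\ln n^{\mathrm{abs}}(\e,S_1)/\e^{-s} \to 0$ by \autoref{lemacik3}, contradicting $(s,0)$-weak tractability along the sequence $d=1$, $\e \to 0$.

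The main obstacle is the sufficiency direction for the absolute criterion with $\lambda_1 < 1$: one must carefully combine the combinatorial bound of \autoref{lemacikgorny} with the univariate decay estimate of \autoref{lemacik3} and simultaneously control both regimes of admissible sequences — those with $\e_k \to 0$ (where the $\e^{-s}$ denominator helps) and those with $\e_k$ bounded below and $d_k \to \infty$ (where one exploits $\lambda_1^{d} \to 0$ to conclude the complexity eventually vanishes, as in \autoref{thm:general0}). Balancing the $\ln d!$ term against the denominator $\e^{-s} + 1$ requires the observation that whenever $\e$ is bounded away from~$0$ the complexity is actually bounded (indeed eventually zero), so the problematic $d\ln d$ contribution never competes with a constant denominator. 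All other implications follow the same "fix $\e$, let $d \to \infty$" template already established in \autoref{rem:parameter} and the proof of \autoref{thm:general0}.
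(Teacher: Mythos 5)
Your overall approach matches the paper's: Lemma~\ref{lemacik3} and Lemma~\ref{lemacikgorny} for the sufficiency direction, and ``fix $\e$, let $d\to\infty$'' counting arguments for necessity. For necessity you go directly (Remark~\ref{rem:parameter}-style counting, and Lemma~\ref{lemacik3} at $d=1$) where the paper routes through conditions \link{cond:3} and \link{cond:4} of \autoref{thm:general0}; both are valid, and the direct route is arguably cleaner here.

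However, the sufficiency argument for the case $\lambda_1<1$, $\lambda_2>0$ (absolute criterion) has a genuine defect in the case analysis. You split according to whether $\e_k\to 0$ or $\e_k$ stays bounded below; but for the absolute error criterion the admissible sequences must satisfy $\e_k<\e_{d_k}^{\mathrm{init}}=\lambda_1^{d_k/2}$, and together with $\e_k^{-1}+d_k\to\infty$ and $\lambda_1<1$ this \emph{forces} $\e_k\to 0$ in all cases (if $d_k$ is bounded then $\e_k^{-1}\to\infty$; if $d_k\to\infty$ then $\e_k<\lambda_1^{d_k/2}\to 0$). So the ``$\e_k$ bounded below'' regime you try to ``handle as in \autoref{thm:general0}'' is vacuous and the appeal is misplaced. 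The case split you actually need, and which your ``$o(\e^{-s})$'' estimate silently assumes but does not justify, is on $\delta_k:=\e_k/\lambda_1^{d_k/2}\in(0,1)$: when $\delta_k\to 0$, Lemma~\ref{lemacik3} gives $\ln n^{\mathrm{abs}}(\delta_k,S'_1)=o(\delta_k^{-s})$ and $d_k\lambda_1^{sd_k/2}$ is uniformly bounded, so $d_k\ln n^{\mathrm{abs}}(\delta_k,S'_1)=o(\e_k^{-s})$; when $\delta_k$ stays bounded below by some $c\in(0,1)$, the little-$o$ estimate says nothing, and instead one must use that $n^{\mathrm{abs}}(\delta_k,S'_1)\leq n^{\mathrm{abs}}(c,S'_1)$ is a fixed constant, after which the $d_k$ factor is absorbed because $d_k<\ln\e_k^{-1}/\ln\lambda_1^{-1/2}$. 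As written, your estimate ``each summand is $o\bigl((\e^{-1}\lambda_1^{d/2})^{s/\ell}\bigr)$'' only holds if the argument of $n^{\mathrm{abs}}(\cdot,S'_1)$ tends to zero, which is exactly what may fail in the second regime. This is precisely the split the paper performs; adding it would close the gap.
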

\begin{proof}
\emph{Step 1.}
From \autoref{thm:general0} it follows that $(s,0)$-weak tractability of arbitrary Hilbert space problems~$S$ (in the sense of \autoref{subsect:general}) is equivalent to the conditions \link{cond:3} and \link{cond:4}. Since now we deal with linear tensor product problems $S$ we moreover know that
\begin{equation*}
	\frac{\lambda_{d,2}}{\mathrm{CRI}_d} 
	= \begin{cases}
		\displaystyle\frac{\lambda_1^{d-1}\cdot\lambda_2}{\lambda_1^d} = \frac{\lambda_2}{\lambda_1} & \text{if} \quad \mathrm{crit}=\mathrm{norm}, \\
		\lambda_1^{d-1}\cdot\lambda_2 & \text{if} \quad \mathrm{crit}=\mathrm{abs}
	\end{cases}
\end{equation*}
for every $d\in\N$. Consequently, for the normalized error criterion \link{cond:3}, i.e., $\lim_{d\to\infty} \lambda_{d,2}/\mathrm{CRI}_d=0$, holds if and only if $\lambda_2=0$. For the absolute error criterion \link{cond:3} it is equivalent to $\lambda_1<1$ or $\lambda_2=0$. Since $\lambda_2=0$ clearly implies that all $\lambda_j$, $j\geq 2$, equal zero, it also yields condition \link{cond:4}. This proves the assertion for the normalized error criterion, as well as the first part for the absolute error criterion.

\emph{Step 2.}
It remains to show that for $1>\lambda_1 \geq \lambda_2>0$ the problem is $S$ is $(s,0)$-weakly tractable if and only if $\lim_{j\to\infty} \lambda_j \, \ln^{2/s}\! j=0$. Condition \link{cond:4} shows that this limit condition is necessary. Indeed, \link{cond:4} particularly yields that for $d=1$, some function $g_s$, and every (small) $\beta>0$ it holds
\begin{equation*}
 	\sup_{\substack{j\in\N,\\j\geq g_s(\beta)}} \lambda_{j} \, \ln^{2/s}\! j \leq L_s \, \beta^{2/s}.
\end{equation*}

To prove sufficiency we apply \autoref{lemacikgorny} and obtain
\begin{align*}
	\frac{\ln n^{\mathrm{abs}}(\e, S_d)}{\e^{-s}+d^{\,0}} 
	&\leq \frac{d\, \ln d}{\e^{-s}+1} + \frac{d\, \ln n^{\mathrm{abs}}\!\left(\e/\lambda_1^{d/2}, S'_1\right)}{\e^{-s}+1} 
	\qquad \text{for each} \quad d\in\N\quad \text{and all} \quad 0<\e < \lambda_1^{d/2},
\end{align*}
where we have used the monotonicity of $n^{\mathrm{abs}}$ w.r.t.\ its first argument.
Note that that $\lambda_1<1$ ensures $\e\leq 1$. For $(s,0)$-weak tractability it is enough to show that both fractions tend to zero for all double sequences $((\e_k,d_k))_{k\in\N}$ with $\e_k<\lambda_1^{d_k/2}$ and $\e_k^{-1}+d_k\to\infty$, as $k$ approaches infinity.
To this end, note that
\begin{equation*}
	\e_k<\lambda_1^{d_k/2} \quad \text{if and only if} \quad d_k < \frac{\ln \e_k^{-1}}{\ln \lambda_1^{-1/2}}
\end{equation*}
and that double sequences which fulfill this necessarily satisfy 
\begin{equation}\label{lim:ek}
	\e_k \to 0, \qquad \text{as} \quad k \to\infty.
\end{equation}
(If $\e_k^{-1}$ is upper bounded, so is $d_k$. This would contradict $\e_k^{-1}+d_k\to\infty$.)
Hence, for the first fraction we conclude
\begin{equation*}
	\frac{d_k \cdot \ln d_k}{\e_k^{-s}+1} 
	\leq c \, \frac{\ln \e_k^{-1} \cdot \ln \ln \e_k^{-1}}{\e_k^{-s}} 
	\to 0, \qquad \text{as} \quad k \to\infty.
\end{equation*}

To handle also the second fraction we distinguish two cases: At first let us consider all subsequences $((\e_{k_\ell},d_{k_\ell}))_{\ell\in\N}$ for which $\delta:=\e_{k_\ell}/\lambda_1^{d_{k_\ell}/2}$ tends to zero, as $\ell\to\infty$.
Then, for $k=k_\ell$, we can estimate
\begin{align}
	\frac{d_k\, \ln n^{\mathrm{abs}}\!\left(\e_k/\lambda_1^{d_k/2}, S'_1\right)}{\e_k^{-s}+1} 
	&\leq \frac{\left( \e_{k}/\lambda_1^{d_{k}/2} \right)^{-s} }{\e_k^{-s}} \cdot \frac{\ln n^{\mathrm{abs}}\!\left(\e_k/\lambda_1^{d_k/2}, S'_1\right)}{\left( \e_{k}/\lambda_1^{d_{k}/2} \right)^{-s}} \nonumber\\
	&= d_k \, \lambda_1^{s\,d_{k}/2} \cdot \frac{\ln n^{\mathrm{abs}}\left(\delta, S'_1\right)}{\delta^{-s}}. \label{eq:2nd_frac}
\end{align}
Therein the first factor is uniformly bounded because of $\lambda_1<1$ and $s>0$. 
Note that our assumption $\lim_{j\to\infty} \lambda_j \, \ln^{2/s}\! j=0$ likewise holds for the rescaled sequence $\lambda_j'$ as defined in \link{eq:rescale}. Using \autoref{lemacik3} this proves that \link{eq:2nd_frac} vanishes for $\delta\to 0$, i.e., if $\ell$ tends to infinity.
We are left with the case of subsequences for which $\delta$ is lower bounded by some constant $c\in(0,1)$.
For these $k=k_\ell$ the term
\begin{equation*}
	\frac{d_k\, \ln n^{\mathrm{abs}}\!\left(\e_k/\lambda_1^{d_k/2}, S'_1\right)}{\e_k^{-s}+1} 
	\leq \frac{d_k}{\e_k^{-s}} \, \ln n^{\mathrm{abs}}\left(c, S'_1\right)
	< c' \, \frac{\ln \e_k^{-1}}{\e_k^{-s}}
\end{equation*}
tends to zero, as $\ell \to \infty$, due to \link{lim:ek}.
\end{proof}

\subsubsection{Non-limiting case}
We continue our analysis with necessary and sufficient conditions for $(s,t)$-weak tractability, where $\min\{s,t\}>0$. In view of \autoref{twierdzonko} and \autoref{thm:tensor_t0} it is reasonable to assume that
$\lambda_2>0$ for the remainder of this section. (Otherwise we would have $(0,0)$-weak tractability which in turn shows $(s,t)$-weak tractability for every $s,t>0$.)
In addition, by $m\in\N$ we denote the multiplicity of the first (i.e., largest) eigenvalue of the univariate operator $W_1:=S_1^*\circ S_1$. That is, we assume
\begin{equation*}
	\la_1=\la_2=\ldots=\la_m>\la_{m+1}\geq\la_{m+2}\geq\ldots \geq 0.
\end{equation*}

We start with a characterization for the normalized error criterion:
\begin{theorem}\label{twnormal}
	Let $s,t>0$ and consider a linear tensor product problem $S=(S_d)_{d\in\mathbb{N}}$ with $\la_2>0$ in the 
worst case setting for the normalized error criterion and for the class $\lall$.
	\begin{itemize}
		\item Assume that $m=1$. Then $S$ is $(s,t)$-weakly tractable if and only if
			\begin{equation*}
				\lim_{n\to\infty}\frac{\la_n}{\ln^{-2/s}\! n}=0.
			\end{equation*}
		\item Assume that $m>1$. Then $S$ is $(s,t)$-weakly tractable if and only if
			\begin{equation*}
				t>1
				\quad \text{and} \quad 
				\lim_{n\to\infty}\frac{\la_n}{\ln^{-2/s}\! n}=0.
			\end{equation*}
	\end{itemize}
\end{theorem}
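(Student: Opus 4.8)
The plan is to reduce the tensor-product statement to the general Hilbert-space characterization in \autoref{thm:general} and to exploit the explicit product structure $\lambda_{d,j}=\prod_{\ell=1}^d\lambda_{j_\ell}$ together with $\mathrm{CRI}_d=\lambda_1^d$. First I would rewrite the normalized eigenvalue ratios as $\lambda_{d,j}/\mathrm{CRI}_d=\prod_{\ell=1}^d(\lambda_{j_\ell}/\lambda_1)=\prod_{\ell=1}^d\lambda'_{j_\ell}\in[0,1]$, where $\lambda'_n=\lambda_n/\lambda_1$ as in \link{eq:rescale}; this makes the problem formally that of the operator $S'_1$ with $\norm{S'_1}=1$, so only finitely many $\lambda'_n$ equal $1$ (namely $\lambda'_1=\dots=\lambda'_m=1>\lambda'_{m+1}\ge\dots$). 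The condition $\lim_{n\to\infty}\lambda_n/\ln^{-2/s}\!n=0$ is equivalent to $\lim_{n\to\infty}\lambda'_n\ln^{2/s}\!n=0$, which via \autoref{lemacik3} is equivalent to $\ln n^{\mathrm{abs}}(\e,S'_1)=o(\e^{-s})$ as $\e\to0$. So the task becomes: show that, in the normalized setting, $(s,t)$-weak tractability of $S$ holds exactly under this decay condition when $m=1$, and under this decay condition \emph{plus} $t>1$ when $m>1$.

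For necessity of the decay condition (both cases), I would test \link{def:st-weak} against sequences with $d=1$ fixed and $\e\to0$: then $n^{\mathrm{norm}}(\e,S_1)=n^{\mathrm{abs}}(\e,S'_1)$, and $(s,t)$-weak tractability forces $\ln n^{\mathrm{abs}}(\e,S'_1)/(\e^{-s}+1)\to0$, hence $\ln n^{\mathrm{abs}}(\e,S'_1)=o(\e^{-s})$; invoke \autoref{lemacik3}. For necessity of $t>1$ when $m>1$: here the largest normalized eigenvalue $1$ of $W'_1$ has multiplicity $m\ge2$, so $W'_d$ has eigenvalue $1$ with multiplicity $m^d$. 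Choosing a fixed $\e\in(0,1)$ and letting $d\to\infty$, we get $n^{\mathrm{norm}}(\e,S_d)\ge m^d$, whence $\ln n^{\mathrm{norm}}(\e,S_d)\ge d\ln m$; if $t\le1$ then $d\ln m/(\e^{-s}+d^{\,t})\not\to0$ along this sequence, contradicting \link{def:st-weak}. (When $m=1$, this obstruction disappears because the eigenvalue $1$ of $W'_d$ is simple, so $n^{\mathrm{norm}}(\e,S_d)$ need not grow with $d$ at fixed $\e$.)

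For sufficiency I would verify conditions \link{cond:1} and \link{cond:2} of \autoref{thm:general} with $\mathrm{CRI}_d=\lambda_1^d$. Condition \link{cond:1}, $\lim_{j\to\infty}(\lambda_{d,j}/\mathrm{CRI}_d)\ln^{2/s}\!j=0$ for each fixed $d$, follows from the decay hypothesis by a union-bound/convolution estimate on the ordered product sequence $(\prod_{\ell=1}^d\lambda'_{j_\ell})$: since $\lambda'_n\ln^{2/s}\!n\to0$, a standard argument (as in the proof of \cite[Theorem~5.5]{NW08}, or directly via \autoref{lemacikgorny} rescaled to $S'_1$, noting $n^{\mathrm{norm}}(\e,S_d)=n^{\mathrm{abs}}(\e,S'_d)\le d!\prod_{\ell=1}^d n^{\mathrm{abs}}(\e^{1/\ell},S'_1)$) gives $\ln n^{\mathrm{abs}}(\e,S'_d)=o(\e^{-s})$ for each fixed $d$, which by \link{eq:explicit_n} is equivalent to \link{cond:1}. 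The real work is condition \link{cond:2}: I must produce $f_{s,t}\colon(0,\overline\beta]\to\N$ so that the triple supremum defining $L_{s,t}$ is finite. The idea is that for $j\ge\lceil\exp(d^{\,t}\sqrt\beta)\rceil+1$ and $d\ge f_{s,t}(\beta)$ large, the contribution of the "many $1$'s" is killed: when $m=1$, a multiindex with $\prod_{\ell=1}^d\lambda'_{j_\ell}$ bounded below by something like $\beta^{2/s}\ln^{-2/s}\!j$ can have at most a controlled number of "non-one" coordinates, and since $\ln j\ge d^{\,t}\sqrt\beta$ grows faster in $d$ than the logarithm of the number of such multiindices (here $d^{\,t}$ with $t>0$ beats $d\ln d$-type counts only after rescaling — this is where the threshold $\exp(d^{\,t}\sqrt\beta)$ is tuned), the bound \link{ineq:bound_lambda}-type estimate closes. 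When $m>1$, the count of eigenvalues equal to $1$ alone is $m^d$, i.e.\ the $j$-th largest normalized eigenvalue stays at $1$ until $j\approx m^d$, so $\ln j$ only reaches size $d\ln m$; to have $\ln j\ge d^{\,t}\sqrt\beta$ force $\lambda_{d,j}/\mathrm{CRI}_d$ small we need $d^{\,t}$ to dominate $d\ln m$, i.e.\ $t>1$ — this is precisely the quantitative reason the two cases split, and $t>1$ makes the estimate work. I expect \link{cond:2}, and in particular pinning down the right $f_{s,t}$ and handling the combinatorics of how many coordinates of a multiindex may exceed $m$, to be the main obstacle; the rest is bookkeeping with \autoref{lemacik3}, \autoref{lemacikgorny}, and \autoref{thm:general}.
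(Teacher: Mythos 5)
Your necessity argument matches the paper's: test $d=1$, $\epsilon\to0$ for the decay condition via \autoref{lemacik3}, and test fixed $\epsilon$, $d\to\infty$ with $n(\epsilon,S_d)\geq m^d$ for $t>1$ when $m>1$. That part is fine.

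The sufficiency direction, however, has a genuine gap and you essentially say so yourself: you call verifying \link{cond:2} ``the main obstacle'' and describe what ought to happen (``the bound \link{ineq:bound_lambda}-type estimate closes'') without producing the estimate. Moreover, the one concrete tool you name, \autoref{lemacikgorny}, is too coarse for the hardest part of the theorem. That lemma gives $n^{\mathrm{abs}}(\e,S'_d)\leq d!\,\prod_{\ell=1}^d n^{\mathrm{abs}}(\e^{1/\ell},S'_1)$, and after taking logarithms the $d!$ contributes a term of order $d\ln d$. That term is only dominated by $\e^{-s}+d^{\,t}$ when $t>1$; it cannot give the $m=1$ assertion for $0<t\leq 1$, which is precisely the case that distinguishes this theorem from a routine consequence of the $d!$ bound.

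The paper's sufficiency proof uses a substantially sharper combinatorial estimate, borrowed from Papageorgiou--Petras: with
\begin{equation*}
a_d(\e):=\min\!\Bigl\{d,\;\Bigl\lceil 2\,\frac{\ln\e^{-1}}{\ln\lambda_{m+1}^{-1}}\Bigr\rceil-1\Bigr\}
\end{equation*}
one has
\begin{equation*}
n(\e,S_d)\leq\binom{d}{a_d(\e)}\,\bigl(n(\e^{1/2},S_1)\bigr)^{a_d(\e)-1}\,n(\e,S_1)\,d\,m^d .
\end{equation*}
The point is that $a_d(\e)=\Theta(\min\{d,\ln\e^{-1}\})$, so the binomial coefficient contributes only $a_d(\e)\ln d$ (not $d\ln d$) and the exponent on the univariate complexity is $a_d(\e)-1$ (not $d-1$). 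Combined with \autoref{lemacik3}, this is exactly what is needed to kill every term by $\e^{-s}+d^{\,t}$ for all $s,t>0$, leaving only the $d\ln m$ term to require $t>1$ when $m>1$. Without this refined counting (or an equivalent estimate that separates ``how many coordinates are active'' from the ambient dimension), the sufficiency claim for $m=1$, $t\leq1$ does not follow from what you wrote. Your high-level plan of checking \link{cond:1} and \link{cond:2} of \autoref{thm:general} is viable in principle, but carrying it out for tensor products would require establishing essentially this same combinatorial bound; the proposal as written does not supply it.
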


\begin{proof}
First of all note that, without loss of generality, we can assume $\lambda_1=1$. Otherwise we may rescale this sequence according to \link{eq:rescale}; see also \link{eq:explicit_n}, as well as the proof of \cite[Theorem~2.12]{Wei14a}.
This clearly yields that each problem instance $S_d$ is well-scaled, so that
\begin{equation*}
	n^{\mathrm{abs}}(\e,S_d)=n^{\mathrm{norm}}(\e,S_d)
	\qquad \text{for all} \quad d\in\N \quad \text{and} \quad \e\in(0,1].
\end{equation*}
Consequently, we abbreviate the notation and simply write $n(\e,S_d)$ within this proof.

\emph{Step 1 (Necessary conditions).} 
Suppose that $S=(S_d)_{d\in\mathbb{N}}$ is $(s,t)$-weakly tractable (for some non-negative $s$ and $t$) in the sense of \autoref{defi:st-WT}, i.e., assume \link{def:st-weak} to be valid. 
Then the necessity of the limit condition (for all $m\in\N$) immediately follows from \link{cond:1} in \autoref{thm:general} applied for $d=1$. 
Alternatively, we note that \link{def:st-weak} particularly holds for double sequences $((\e_k,d_k))_{k\in\N}\subset(0,1)\times\N$, where $d_k\equiv 1$ and $\e_k\rightarrow 0$, as $k\rightarrow\infty$. This yields
$$
\lim_{\e^{-1}\to \infty} \frac{\ln n(\e,S_1)}{\e^{-s}}=0
$$ 
which in turn is equivalent to $\la_n=o(\ln^{-2/s} n)$, as $n\rightarrow\infty$; see \autoref{lemacik3}.

Moreover, due to the assumption $\lambda_1=1$, we have the following obvious estimate:
$$
	n(\e,S_d)\geq m^d
	\qquad \text{for all fixed} \quad \e<1.
$$
Hence, it holds $\ln n(\e,S_d)\geq d\ln m$, and thus we additionally conclude that $t>1$ if $m>1$.

\emph{Step 2 (Sufficient conditions).} 
We now prove the converse implications.
For this purpose, we assume that $\lambda_{m+1}>0$. 
Note that this can be done without loss of generality, because then the problem only becomes harder (compared to the case $\lambda_{m+1}=0)$.

We will need to estimate the value of 
$$
	n(\e,S_d)
	=\#\! \left\{ (j_1,\dots,j_d)\in\mathbb{N}^d \sep \la_{j_1}\cdot \ldots\ \cdot \la_{j_d} > \e^2 \right\}.
$$
Repeating the combinatorial arguments used in \cite{PapPet09} we obtain the upper bound
$$
	n(\e,S_d)
	\leq \binom{d}{a_d(\e)} \, \left(n(\e^{1/2}, S_1)\right)^{a_d(\e)-1} \, n(\e,S_1) \, d \, m^d,
$$
where
\begin{equation}\label{def:ad}
	a_d(\e) := \min\!\left\{ d,\, \left\lceil 2\, \frac{\ln \e^{-1}}{\ln\la_{m+1}^{-1}} \right\rceil-1 \right\}
	\qquad \text{for} \quad \e<1 \quad \text{and} \quad d\in\N.
\end{equation}
In comparison with the estimate used in \cite{PapPet09} there are two differences: the $m^d$ factor 
and the appearance of $\la_{m+1}$ instead of $\la_2$ in \link{def:ad}. 
They simply stem from the fact that now we have, in general, $m$ indices $j\in\mathbb{N}$ corresponding to 
eigenvalues $\la_j=1$. Clearly, if $m=1$, then our estimate is the same as in \cite{PapPet09}.

Note that $a_d(\e)=\Theta\!\left(\min\{d,\,\ln\e^{-1}\}\right)$, where the equivalence factors in the 
$\Theta$-notation depend on $\la_{m+1}$, but not on $\e$ or $d$.
The logarithm of $n(\e,S_d)$ 
can be bounded, as in \cite{PapPet09}, from above by

\begin{equation}\label{est:combinatoric}
	\ln n(\e,S_d)
	\leq a_d(\e) \, \ln d + a_d(\e) \, \ln n(\e^{1/2}, S_1) + \ln n(\e,S_1) + \ln d + d \, \ln m.
\end{equation}
Next let us define
\begin{eqnarray*}
	\alpha \!\!\!&:=&\!\!\! \limsup_{\e^{-1}+d\to\infty} \frac{\ln n(\e,S_d)}{\e^{-s}+d^{\,t}}, \\
	\beta \!\!\!&:=&\!\!\! \limsup_{\e^{-1}+d\to\infty} \left[\frac{a_d(\e) \, \ln d}{\e^{-s}+d^{\,t}}+
\frac{a_d(\e) \, \ln n(\e^{1/2}, S_1)}{\e^{-s}+d^{\,t}}+ \frac{\ln n(\e, S_1)}{\e^{-s}+d^{\,t}}+
\frac{\ln d}{\e^{-s}+d^{\,t}}\right],\quad \text{and}\\
	 \gamma \!\!\!&:=&\!\!\! \limsup_{\e^{-1}+d\to\infty} \frac{d \, \ln m}{\e^{-s}+d^{\,t}}.
\end{eqnarray*}
Then \link{est:combinatoric} yields
$$
	0\leq \alpha \leq \beta + \gamma
$$
and thus it suffices to prove that $\beta = \gamma = 0$ in order to show the claim.

\emph{Substep 2.1.}
Here we show that $\beta = 0$ for all $s,t>0$. 
For this purpose, let 
\begin{equation*}
	x(\e,d):=\max\{d,\,\e^{-1}\}.
\end{equation*}
Then, obviously, $\ln d \leq \ln x(\epsilon,d)$ and there exists a constant $c>0$ such that
$$
	\e^{-s}+d^{\, t}
	\geq \e^{-\min\{s,t\}}+d^{\min\{s,t\}}
	\geq c \, (\e^{-1}+d)^{\min\{s,t\}} 
	\geq c \, x(\epsilon,d)^{\min\{s,t\}}, 
$$
as well as
$$
	a_d(\e)
	\leq c \, \min\{d,\, \ln \e^{-1}\}
	\leq c \, \ln\e^{-1}
	\leq c \, \ln x(\epsilon,d).
$$ 
Since $x(\epsilon,d)\rightarrow\infty$, as $\e^{-1}+d\rightarrow \infty$, we thus have
$$
	\limsup_{\e^{-1}+d \to \infty} \frac{a_d(\e) \, \ln d}{\e^{-s}+d^{\,t}}
	\leq \limsup_{\e^{-1}+d \to \infty} \frac{\ln^2 x(\epsilon,d)}{x(\epsilon,d)^{\min\{s,t\}}}=0.
$$

Moreover, let $\delta:=\delta(\e,d):=(\e^{-s}+d^{\,t})^{-1/s}$. 
Then $\delta\to 0$ if and only if $\e^{-1}+d\to\infty$. 
Consequently \autoref{lemacik3} implies
\begin{equation*}
	\limsup_{\e^{-1}+d \to \infty}\frac{\ln n(\e,S_1)}{\e^{-s}+d^{\,t}}
	\leq \limsup_{\e^{-1}+d \to \infty}\frac{\ln n([\e^{-s}+d^{\,t}]^{-1/s},S_1)}{\e^{-s}+d^{\,t}}
	= \limsup_{\delta \to 0} \frac{\ln n(\delta,S_1)}{\delta^{-s}}
	=0,
\end{equation*}
as well as
\begin{align*}
	\limsup_{\e^{-1}+d \to \infty} \frac{a_d(\e) \, \ln n(\e^{1/2}, S_1)}{\e^{-s}+d^{\,t}}
	&\leq c \, \limsup_{\e^{-1}+d \to \infty}\frac{\ln\e^{-1} \, \ln n(\e^{1/2},S_1)}{\e^{-s}+d^{\,t}} \\
	&\leq c \, \limsup_{\e^{-1}+d \to \infty} \frac{\ln\delta^{-1} \, \ln n(\delta^{1/2},S_1)}{\delta^{-s}} \\
	&= c \, \limsup_{\delta \to 0} \frac{\ln\delta^{-1}}{(\delta^{-1})^{s/2}} \cdot \frac{\ln n(\delta^{1/2},S_1)}{(\delta^{1/2})^{-s}}
	=0.
\end{align*}

In addition, $d\leq \delta^{-s/t}$ gives
\begin{equation*}
	\limsup_{\e^{-1}+d \to \infty}\frac{\ln d}{\e^{-s}+d^{\,t}}
	\leq \limsup_{\delta \to 0} \frac{\ln \delta^{-s/t}}{\delta^{-s}}
	=0
\end{equation*}
such that $\beta=0$, as claimed.

\emph{Substep 2.2.}
It remains to show that $\gamma=0$.
If $m=1$, then this is obvious since $\ln m=0$.
If $m>1$, then 
\begin{equation*}
	\limsup_{\e^{-1}+d \to \infty} \frac{d \, \ln m}{\e^{-s}+d^{\,t}}
	\leq \ln m \cdot \limsup_{\delta \to 0} \frac{\delta^{-s/t}}{\delta^{-s}}
	=0,
\end{equation*}
due to the additional assumption $t>1$.
\end{proof}

We complete our investigations of the non-limiting case of $(s,t)$-weak tractability for linear tensor product problems with the following results for the absolute error criterion.

\begin{theorem}\label{twabs}
	Let $s,t> 0$ and consider a linear tensor product problem $S=(S_d)_{d\in\mathbb{N}}$ with $\la_2>0$ in the 
worst case setting for the absolute error criterion and for the class $\lall$.
	\begin{itemize}
		\item Let $\la_1<1$. Then $S$ is $(s,t)$-weakly tractable if and only if
			\begin{equation}\label{cond:log_decay}
				\lim_{n\to\infty}\frac{\la_n}{\ln^{-2/s} \! n}=0.
			\end{equation}
		\item Let $\la_1=1$ and 
			\begin{itemize}
				\item[1.)] assume that $m=1$. Then $S$ is $(s,t)$-weakly tractable if and only if
					\begin{equation*}
						\lim_{n\to\infty}\frac{\la_n}{\ln^{-2/s} \! n}=0.
					\end{equation*}
				\item[2.)] assume that $m>1$. Then $S$ is $(s,t)$-weakly tractable if and only if
					\begin{equation*}
						t>1
						\quad \text{and} \quad 
						\lim_{n\to\infty}\frac{\la_n}{\ln^{-2/s} \! n}=0.
					\end{equation*}
			\end{itemize}
		\item Let $\la_1>1$ and define $S_1':=\frac{1}{\sqrt{\lambda_1}} S_1$. Then $(s,t)$-weak tractability of $S$ implies
	\begin{equation}\label{ness_cond}
		t>1
		\quad \text{and} \quad 
		\lim_{\e^{-1}+d\to\infty} \frac{\max_{\ell=1,\ldots,d}\limits \left[ \ell \cdot \ln n^{\mathrm{abs}}\!\left( ( \e / \la_1^{d/2} )^{1/\ell},\, S'_1 \right) \right]}{\e^{-s}+d^{\,t}} 
		= 0.
	\end{equation}
		Moreover, the conditions
	\begin{equation}\label{suff_cond}
		\quad t>1
		\quad \text{and} \quad 
		\lim_{\e^{-1}+d\to\infty} \frac{\ln d \cdot \max_{\ell=1,\ldots,d}\limits \left[ \ell \cdot \ln n^{\mathrm{abs}}\!\left( ( \e / \la_1^{d/2} )^{1/\ell},\, S'_1 \right) \right]}{\e^{-s}+d^{\,t}}
		= 0
	\end{equation}
	are sufficient for $S$ to be $(s,t)$-weakly tractable.
	\end{itemize}
\end{theorem}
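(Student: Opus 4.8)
The plan is to split the argument into the three regimes $\la_1=1$, $\la_1<1$ and $\la_1>1$, in each of them exploiting the product structure of the eigenvalues of $W_d$. Throughout I will use the identity
\[
	n^{\mathrm{abs}}(\e,S_d)=n^{\mathrm{abs}}\!\left(\e/\la_1^{d/2},\,S'_d\right),
	\qquad \text{where}\quad S'_d:=\bigotimes_{\ell=1}^d S'_1,
\]
which is immediate from \link{eq:complexity} and \link{eq:rescale} since $\la_{d,j}/\la_1^d=\prod_{\ell=1}^d\la'_{j_\ell}$; note that whenever $(\e,d)$ is admissible for the absolute criterion, i.e.\ $\e<\e_d^{\mathrm{init}}=\la_1^{d/2}$, the rescaled accuracy $\e/\la_1^{d/2}$ lies in $(0,1)$. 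If $\la_1=1$, then $\mathrm{CRI}_d=1=\la_1^d$ and $\e_d^{\mathrm{init}}=1$, hence $n^{\mathrm{abs}}(\e,S_d)=n^{\mathrm{norm}}(\e,S_d)$ for all $d\in\N$ and $\e\in(0,1]$, and the two subcases $m=1$ and $m>1$ follow at once from \autoref{twnormal}. For the remaining regimes the two combinatorial tools are the upper bound of \autoref{lemacikgorny} and the elementary lower bound obtained by fixing $d-\ell$ coordinates of a multiindex equal to $1$ and letting the other $\ell$ range freely over all ``large'' univariate indices.

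Assume $\la_1<1$. For the necessity of \link{cond:log_decay} I would restrict \link{def:st-weak} to the double sequences with $d_k\equiv1$ and $\e_k\searrow0$; this yields $\lim_{\e\to0}\ln n^{\mathrm{abs}}(\e,S_1)/\e^{-s}=0$, which by \autoref{lemacik3} is equivalent to $\la_n=o(\ln^{-2/s}\! n)$. For sufficiency, note that the admissibility condition $\e<\la_1^{d/2}$ is equivalent to $d<2\ln\e^{-1}/\ln\la_1^{-1}$, so every admissible double sequence with $\e_k^{-1}+d_k\to\infty$ automatically satisfies $\e_k\to0$ and $d_k=O(\ln\e_k^{-1})$. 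Apply \autoref{lemacikgorny}, take logarithms, write $\delta:=\e/\la_1^{d/2}\in(0,1)$, and bound $\ln d!\le d\ln d$, so that
\[
	\ln n^{\mathrm{abs}}(\e,S_d)\le d\ln d+\sum_{\ell=1}^d\ln n^{\mathrm{abs}}\!\left(\delta^{1/\ell},\,S'_1\right).
\]
Since \link{cond:log_decay} also holds for the rescaled sequence $\la'_n=\la_n/\la_1$, \autoref{lemacik3} provides, for every $\tau>0$, some $\eta_\tau\in(0,1)$ with $\ln n^{\mathrm{abs}}(\eta,S'_1)\le\tau\,\eta^{-s}$ for $\eta\le\eta_\tau$ and $\ln n^{\mathrm{abs}}(\eta,S'_1)\le K_\tau:=\ln n^{\mathrm{abs}}(\eta_\tau,S'_1)$ for $\eta_\tau<\eta<1$. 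Splitting the sum according to whether $\delta^{1/\ell}\le\eta_\tau$, and using $\sum_{\ell=1}^d\delta^{-s/\ell}\le\delta^{-s}+d\,\delta^{-s/2}$, one gets $\ln n^{\mathrm{abs}}(\e,S_d)\le d\ln d+\tau\,\delta^{-s}+\tau\,d\,\delta^{-s/2}+d\,K_\tau$; dividing by $\e^{-s}+d^{\,t}\ge\e^{-s}$, and invoking $\delta^{-s}\le\e^{-s}$, $\delta^{-s/2}\le\e^{-s/2}$ and $d=O(\ln\e^{-1})$, the first, third and fourth contributions tend to $0$ as $\e\to0$ while the second is at most $\tau$. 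Letting $\tau\to0$ establishes \link{def:st-weak}.

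Assume $\la_1>1$. The necessity of $t>1$ I would obtain by bounding $n^{\mathrm{abs}}(\e,S_d)$ from below by the number of multiindices $j\in\{1,2\}^d$ with $k$ entries equal to $2$ satisfying $\la_1^{d-k}\la_2^{\,k}>\e^2$: for fixed $\e\in(0,1)$ this gives $\ln n^{\mathrm{abs}}(\e,S_d)\ge c\,d$ for all large $d$ (if $m>1$ all $2^d$ such multiindices qualify, as $\la_2=\la_1$; if $m=1$ the inequality reduces to $k<(d\ln\la_1+2\ln\e^{-1})/\ln(\la_1/\la_2)$, which leaves $\Omega(d)$ admissible $k$ because $\ln\la_1>0$), and feeding the double sequence $(\e,d)_{d\in\N}$ into \link{def:st-weak} forces $t>1$. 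For the necessity of the limit in \link{ness_cond}, fix $\ell\le d$, put $N_\ell:=n^{\mathrm{abs}}((\e/\la_1^{d/2})^{1/\ell},S'_1)$, and observe that every $j$ with $j_1,\dots,j_\ell\in\{1,\dots,N_\ell\}$ and $j_{\ell+1}=\dots=j_d=1$ satisfies $\prod_{i=1}^d\la'_{j_i}>(\e/\la_1^{d/2})^2$, i.e.\ $\la_{d,j}>\e^2$; hence $n^{\mathrm{abs}}(\e,S_d)\ge N_\ell^{\,\ell}$, so $\ln n^{\mathrm{abs}}(\e,S_d)\ge\max_{\ell=1,\dots,d}\bigl[\ell\ln n^{\mathrm{abs}}((\e/\la_1^{d/2})^{1/\ell},S'_1)\bigr]$, and \link{ness_cond} follows via \link{def:st-weak}. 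For the sufficiency of \link{suff_cond}, apply \autoref{lemacikgorny}, take logarithms, and use $\ln d!\le d\ln d$ together with the estimate $\sum_{\ell=1}^d\ln n^{\mathrm{abs}}((\e/\la_1^{d/2})^{1/\ell},S'_1)\le(1+\ln d)\,M$ (valid because the $\ell$-th summand equals $\ell^{-1}$ times the bracket $\ell\ln n^{\mathrm{abs}}(\cdots)$ and $\sum_{\ell\le d}\ell^{-1}\le1+\ln d$), where $M$ denotes the maximum appearing in \link{ness_cond}. Dividing by $\e^{-s}+d^{\,t}$ then shows that each of the resulting three terms tends to $0$ along every admissible double sequence with $\e_k^{-1}+d_k\to\infty$: $d\ln d/(\e^{-s}+d^{\,t})$ because $t>1$, $\ln d\cdot M/(\e^{-s}+d^{\,t})$ by \link{suff_cond}, and the residual $M/(\e^{-s}+d^{\,t})$ because it is dominated by the previous one once $d\ge3$ while for $d\in\{1,2\}$ (where $\e\to0$ is forced) the $d=2$ instance of \link{suff_cond}, combined with \autoref{lemacik3}, yields $\ln n^{\mathrm{abs}}(\eta,S'_1)=o(\eta^{-s})$. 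Hence $S$ is $(s,t)$-weakly tractable.

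The delicate part is the regime $\la_1>1$. On the one hand, the necessity of $t>1$ genuinely requires the combinatorial $\{1,2\}^d$-count: estimating $n^{\mathrm{abs}}(\e,S_d)$ from below by varying only one coordinate of a multiindex may grow merely subexponentially in $d$ when the univariate singular values decay fast. On the other hand, the upper bound in \autoref{lemacikgorny} inevitably introduces a factor $\sum_{\ell=1}^d\ell^{-1}\asymp\ln d$ more than the matching lower bound does; this is precisely why \link{ness_cond} and \link{suff_cond} differ by that factor, and why a full characterization in this last regime is not obtained here.
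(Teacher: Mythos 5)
Your proof is correct and follows the same overall architecture as the paper (reduce $\lambda_1=1$ to \autoref{twnormal}, treat $\lambda_1<1$ and $\lambda_1>1$ via \autoref{lemacikgorny} and \autoref{lemacik3}), but several steps are executed by genuinely different means, each of which I verified.

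For $\lambda_1<1$, sufficiency: the paper uses a short comparison argument, introducing the auxiliary tensor product problem $S''$ with eigenvalues $\lambda''_1:=1$, $\lambda''_n:=\lambda_n$ for $n>1$, observing $n^{\mathrm{abs}}(\e,S''_d)\geq n^{\mathrm{abs}}(\e,S_d)$ (since $\lambda''_n\geq\lambda_n$ termwise), and then invoking the $\lambda_1=1$, $m=1$ subcase. You instead carry out a direct estimate: apply \autoref{lemacikgorny}, write $\delta=\e/\lambda_1^{d/2}\in(0,1)$, split the sum $\sum_{\ell=1}^d\ln n^{\mathrm{abs}}(\delta^{1/\ell},S'_1)$ according to $\delta^{1/\ell}\lessgtr\eta_\tau$, and exploit $\sum_{\ell=1}^d\delta^{-s/\ell}\leq\delta^{-s}+d\,\delta^{-s/2}$ together with the admissibility constraint $d=O(\ln\e^{-1})$. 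Both are valid; the paper's reduction is shorter, yours is more explicit and self-contained. For $\lambda_1>1$, necessity of $t>1$: the paper cites \cite[Theorem~5.5]{NW08} to deduce the curse of dimensionality and hence $t>1$, whereas you give a self-contained lower bound by counting multiindices in $\{1,2\}^d$; your analysis (including the $m=1$ subcase with the threshold $k<(d\ln\lambda_1+2\ln\e^{-1})/\ln(\lambda_1/\lambda_2)$) is correct, since the range of admissible $k$ is $\Omega(d)$ and hence the count $\sum_k\binom{d}{k}$ is exponential in $d$. The lower bound for the limit condition in \eqref{ness_cond} matches the paper's. For sufficiency when $\lambda_1>1$ you also apply \autoref{lemacikgorny} and bound the sum by $(1+\ln d)M$; here you go a step beyond the paper's explicit reasoning by pointing out that the bound $\sum_{\ell\leq d}1/\ell\leq c\ln d$ fails for $d\in\{1,2\}$ and patching this by observing that along any admissible sequence with $d\in\{1,2\}$ one has $\e\to0$ and the $d=2$ instance of \eqref{suff_cond} already forces $\ln n^{\mathrm{abs}}(\eta,S'_1)=o(\eta^{-s})$ (the appeal to \autoref{lemacik3} in that sentence is unnecessary but harmless). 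In total, the argument is sound and, where it diverges from the paper, the alternatives are correct and arguably a bit more careful in the small-$d$ regime.
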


\begin{proof}
\emph{Step 1 (Case $\lambda_1=1$).} 
As explained in the proof of \autoref{twnormal}, for this case the results w.r.t.\ the absolute and the normalized error criteria coincide. Hence, the assertion follows from \autoref{twnormal}.

\emph{Step 2 (Case $\lambda_1<1$).}
Similar to the proof for the normalized error criterion, necessity of \link{cond:log_decay} follows from \link{cond:1} in \autoref{thm:general}.

To see that this limit condition is also sufficient, we note that the linear tensor product problem $S'':=(S''_d)_{d\in\mathbb{N}}$ defined by the sequence
\begin{equation*}
	\la''_1 := 1
	\qquad \text{and} \qquad 
	\la''_n: = \la_n
	\quad \mbox{for} \quad n>1
\end{equation*}
(of eigenvalues of the univariate operator $W''_1:=(S''_1)^{*} \circ S''_1$) is harder than the problem $S$.

\emph{Step 3 (Case $\lambda_1>1$).}
\emph{Substep 3.1 (Necessity).}
Suppose that $S=(S_d)_{d\in\mathbb{N}}$ is $(s,t)$-weakly tractable, i.e., assume \link{def:st-weak}.
From \cite[Theorem 5.5]{NW08} we know that $S$ suffers from the curse of dimensionality. 
Hence, for all $\e_0\in(0,1)$ there exists $c>0$ such that
\begin{equation*}
	n^{\mathrm{abs}}(\e_0,S_d) \geq (1+c)^d
	\qquad \text{for all} \quad d\in\N.
\end{equation*}
Considering the double sequence $((\e_k,d_k))_{k\in\N}$ with $\e_k\equiv \e_0$ in \link{def:st-weak} thus shows that $t>1$ since otherwise
\begin{equation*}
	\frac{\ln n^{\mathrm{abs}}(\e_k,S_{d_k})}{\e_k^{-s}+d_k^{\,t}} \geq \frac{d_k \, \ln (1+c)}{\e_0^{-s}+d_k^{\,t}} \geq c' \, d_k^{1-t}
\end{equation*}
does not tend to zero, as $k\to\infty$.

Furthermore, consider the linear tensor product problem $S':=(S'_d)_{d\in\N}$ defined by
$S'_1:=\frac{1}{\sqrt{\la_1}}S_1$. 
Then the ordered eigenvalues $\la'_n$, $n\in\N$, of $W'_1 := (S'_1)^*\circ S'_1$ satisfy \link{eq:rescale} and \link{eq:n_modified} implies that for all $\ell=1,\ldots,d$ it holds
\begin{align*}
	n^{\mathrm{abs}}(\e, \, S_d) 
	&= \#\!\left\{ (j_1,\ldots,j_d) \in\mathbb{N}^d \sep \la'_{j_1}\cdot\ldots\cdot\la'_{j_d}>\e^2/\la_1^d \right\} \\
	&\geq \#\!\left\{ (j_1,\ldots,j_\ell) \in\mathbb{N}^\ell \sep \la'_{j_1}\cdot\ldots\cdot\la'_{j_\ell}> \left( \prod_{k=1}^\ell \left[ \e/\la_1^{d/2} \right]^{1/\ell} \right)^{2} \right\} \\
	&\geq \# \bigtimes_{k=1}^\ell \!\left\{ j\in\mathbb{N} \sep \la'_j > \left( \left[ \e/\la_1^{d/2} \right]^{1/\ell} \right)^{2} \right\} \\
	&= n^{\mathrm{abs}}\!\left(\left(\e/\la_1^{d/2}\right)^{1/\ell}, \, S'_1\right)^\ell.
\end{align*}
Hence, $(s,t)$-weak tractability also implies
\begin{equation*}
	\frac{\max_{\ell=1,\ldots,d}\limits \left[ \ell \cdot \ln n^{\mathrm{abs}}\!\left( \left( \e/\la_1^{d/2} \right)^{1/\ell}, \, S'_1\right) \right]}{\e^{-s}+d^{\,t}} \rightarrow 0, 
	\qquad \text{as} \quad \e^{-1}+d\to\infty.
\end{equation*}

\emph{Substep 3.1 (Sufficiency).}
To see that the stated conditions are sufficient for $(s,t)$-weak tractability, we employ \autoref{lemacikgorny} to obtain
\begin{align*}
		\ln n^{\mathrm{abs}}(\e, \, S_d)
		&\leq d\, \ln d + \sum_{\ell=1}^d \ln n^{\mathrm{abs}}\!\left( ( \e / \la_1^{d/2} )^{1/\ell},\, S'_1 \right)\\
		&\leq d\, \ln d + \max_{\ell=1,\ldots,d} \left[ \ell \cdot \ln n^{\mathrm{abs}}\!\left( ( \e / \la_1^{d/2} )^{1/\ell},\, S'_1 \right) \right] \sum_{\ell=1}^d \frac{1}{\ell} \\
		&\leq d\, \ln d + c\cdot \ln d \cdot \max_{\ell=1,\ldots,d} \left[ \ell \cdot \ln n^{\mathrm{abs}}\!\left( ( \e / \la_1^{d/2} )^{1/\ell},\, S'_1 \right) \right]
\end{align*}
with some $c>0$.

Due to the assumption $t>1$, we obviously have
$$
	\lim_{\e^{-1}+d\rightarrow\infty}\frac{d\ln d}{\e^{-s}+d^{\,t}}=0.
$$
As the convergence of the remaining term directly follows as well, we have completed the proof.
\end{proof}

\begin{remark}
Let us add some final remarks on the case $\lambda_1>1$ in \autoref{twabs}:
\begin{itemize}
	\item[(i)] Note that, in contrast to the case $\lambda_1 \leq 1$, for $\lambda_1>1$ our conditions are stated in terms of maxima of weighted univariate information complexities $n^{\mathrm{abs}}(( \e/\la_1^{d/2} )^{1/\ell}, \, S'_1)$.
	For every fixed $\ell$ the growth behavior of these information complexities can be translated into decay conditions of singular values $\lambda^{(1)}=(\lambda_n)_{n\in\N}$: 
	
	\begin{lemma}\label{lemacik4}
		Let $L>1$. Then, for every $t>0$,
		\begin{equation*}
			\lim_{n\rightarrow\infty}\frac{\ln\la_n^{-1}}{\ln^{1/t} n} = \infty
			\qquad \text{if and only if} \qquad
			\lim_{k\to \infty} \frac{\ln n^{\mathrm{abs}}\!\left(1/L^k, \,S_1\right)}{k^{\,t}}=0.
		\end{equation*}		
	\end{lemma}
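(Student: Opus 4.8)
The plan is to translate between the decay condition on the singular values $\la_n$ and the growth condition on the information complexity $n^{\mathrm{abs}}(1/L^k,S_1)$ by using the explicit formula for the univariate complexity, namely
\[
	n^{\mathrm{abs}}(\e,S_1) = \#\{n\in\N \sep \la_n > \e^2\} = \min\{n\in\N_0\sep \la_{n+1}\leq\e^2\},
\]
which follows from \link{eq:explicit_n} (with $\mathrm{crit}=\mathrm{abs}$, so $\mathrm{CRI}_1=1$). Setting $\e=1/L^k$ this reads $n^{\mathrm{abs}}(1/L^k,S_1)=\#\{n\sep \la_n> L^{-2k}\}$. The key observation is that the two quantities $N(k):=n^{\mathrm{abs}}(1/L^k,S_1)$ and the sequence $(\ln\la_n^{-1})_{n\in\N}$ are, up to the monotone rescaling by $\ln L$, \emph{generalized inverses} of each other: $N(k)$ counts how many $n$ satisfy $\ln\la_n^{-1} < 2k\ln L$. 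Thus a statement about the rate at which $\ln\la_n^{-1}$ outgrows $\ln^{1/t} n$ corresponds exactly to a statement about the rate at which $N(k)$ is outgrown by $\exp(c\,k^{\,t})$ for every $c>0$, which is precisely $\ln N(k) = o(k^{\,t})$.

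First I would prove the implication ``$\Rightarrow$''. Assume $\ln\la_n^{-1}/\ln^{1/t} n\to\infty$. Fix $\delta>0$; then there is $n_0$ such that $\ln\la_n^{-1} \geq \delta^{-1}\ln^{1/t} n$ for all $n\geq n_0$, i.e.\ $\la_n \leq \exp(-\delta^{-1}\ln^{1/t} n)$. If $n=N(k)\geq n_0$ then by definition $\la_{N(k)} > L^{-2k}$ (the $N(k)$-th eigenvalue still exceeds the threshold), hence $\exp(-\delta^{-1}\ln^{1/t} N(k)) > L^{-2k}$, which rearranges to $\ln^{1/t} N(k) < 2\delta\,k\ln L$, i.e.\ $\ln N(k) < (2\delta\ln L)^t\, k^{\,t}$. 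Since $\delta>0$ was arbitrary and $\ln N(k)$ is bounded for the finitely many $k$ with $N(k)<n_0$, this gives $\ln N(k)/k^{\,t}\to 0$.

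Conversely, for ``$\Leftarrow$'' I would contrapose: suppose $\ln\la_n^{-1}/\ln^{1/t} n$ does \emph{not} tend to infinity, so there is $c>0$ and a subsequence $n_i\to\infty$ with $\la_{n_i} \geq \exp(-c\,\ln^{1/t} n_i)$. Choosing $k_i$ minimal with $L^{-2k_i} < \la_{n_i}$ — concretely $k_i:=\ceil{\tfrac12\ln\la_{n_i}^{-1}/\ln L}+1$ — we get $n^{\mathrm{abs}}(1/L^{k_i},S_1)\geq n_i$ because the first $n_i$ eigenvalues all exceed the threshold $L^{-2k_i}$. From $\la_{n_i}\geq\exp(-c\ln^{1/t}n_i)$ we bound $k_i \leq c'\,\ln^{1/t} n_i + c''$, hence $\ln n_i \geq c'''\, k_i^{\,t}$ for large $i$, so $\ln n^{\mathrm{abs}}(1/L^{k_i},S_1)/k_i^{\,t} \geq \ln n_i / k_i^{\,t}$ stays bounded away from zero along $(k_i)$, contradicting $\ln n^{\mathrm{abs}}(1/L^k,S_1)/k^{\,t}\to0$.

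The routine bookkeeping — keeping track of ceilings, the factor $2$ coming from $\e^2$, the constant $\ln L$, and the finitely many small indices — is straightforward. The only place that needs a little care, and which I regard as the main (minor) obstacle, is the non-monotone behaviour in the converse direction: one cannot work with a single sequence but must pass to the subsequence $n_i$ realizing the failure of the limit and then produce the matching subsequence $k_i$ of exponents; making sure that $k_i\to\infty$ (so that the contradiction is with the full limit, not a vacuous statement) uses $n_i\to\infty$ together with $L>1$. Everything else reduces to the inverse-function correspondence described above.
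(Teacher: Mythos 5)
Your proof is correct, and the underlying idea (that $n \mapsto \lambda_n$ and $k \mapsto n^{\mathrm{abs}}(1/L^k,S_1)$ are generalized inverses up to the rescaling by $\ln L$ and the squaring of $\epsilon$) is exactly the paper's. Where you diverge is in the ``$\Leftarrow$'' direction: the paper proves it directly---it assumes $n^{\mathrm{abs}}(1/L^k,S_1)\leq\lceil\exp(c\,k^t)\rceil$, defines $m_k:=\lceil\exp(c\,k^t)\rceil+1$, derives a lower bound on $\ln\lambda_{m_k}^{-1}/\ln^{1/t}m_k$, and then has to interpolate to general $n$ by introducing $k(n):=\max\{k:m_k\leq n\}$ and controlling the ratio $\ln m_{k(n)}/\ln m_{k(n)+1}$. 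You instead argue by contraposition, which replaces that ``hard'' passage from complexity bounds to eigenvalue bounds by the ``easy'' passage from eigenvalue bounds to complexity bounds (the same direction used in Step 2 of the paper and in your ``$\Rightarrow$''). This is a genuine simplification: your version of the lemma has both directions running ``eigenvalues $\to$ complexity'', and the interpolation over $n$ that occupies much of the paper's Step~1 disappears. One small imprecision: in the ``$\Rightarrow$'' direction you say the set $\{k : N(k)<n_0\}$ is finite, which need not hold (if only finitely many $\lambda_n$ are nonzero, $N(k)$ can stay below $n_0$ for all $k$); but the conclusion survives because for those $k$ you have $\ln N(k)\leq\ln n_0$ while $k^t\to\infty$, so the ratio still tends to zero---you should phrase it as a bound on the $\limsup$ rather than via finiteness.
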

	
	Before we give a proof of this lemma, let us remark that
	this can be used to show that $\lim_{n\rightarrow\infty}\frac{\ln\la_n^{-1}}{\ln^{1/t} n} = \infty$ is necessary for $(s,t)$-weak tractability if $\lambda_1>1$. 
	To see this, note that choosing $\ell=1$ in \link{ness_cond} particularly implies
	\begin{equation*}
		\lim_{\e^{-1}+d\to\infty} \frac{ \ln n^{\mathrm{abs}}\!\left( \e / \la_1^{d/2} ,\, S'_1 \right)}{\e^{-s}+d^{\,t}} 
		= 0.
	\end{equation*}
	Hence, considering the sequence $((\e_k,d_k))_{k\in\N}$ with $\e_k\equiv 1$ and $d_k=k$ shows that we have $\ln n^{\mathrm{abs}}\!\left(1/L^k, S'_1\right)=o(k^{\,t})$, where $L:=\lambda_1^{1/2}>1$. This in turn yields 
\begin{equation*}
	\lim_{n\rightarrow\infty} \frac{\ln\la_n^{-1}}{\ln^{1/t} \! n}
	= \lim_{n\rightarrow\infty} \frac{\ln(\la'_n)^{-1} - \ln \lambda_1}{\ln^{1/t} \! n} = \infty,
\end{equation*}
due to \autoref{lemacik4} (applied for $S_1':=\frac{1}{\sqrt{\lambda_1}} \, S_1$).

We note in passing that this necessary condition is much stronger than the decay condition \link{cond:log_decay} which characterizes $(s,t)$-weak tractability in the case $\lambda_1\leq 1$. 
In addition, it is interesting to see that it involves the parameter $t$. (In contrast \link{cond:log_decay} only depends on~$s$!)

	\begin{proof}[Proof of \autoref{lemacik4}]
	\emph{Step 1 (Sufficiency).}
It follows from the assumption that for every $c>0$ there exists 
$k_c\in\mathbb{N}$ such that for every $k\geq k_c$ we have
$$	
	n^\mathrm{abs}(1/L^k,S_1)
	\leq \left\lceil \exp\!\left( c\, k^{\, t} \right) \right\rceil.
$$
Let us fix $c>0$. 
Hence, for $k\geq k_c$ we have $\la_{\left\lceil \exp\left( c\, k^{\, t} \right) \right\rceil+1}\leq 1/L^{2k}$.
Now set
$$
	m_k
	:= \left\lceil \exp\!\left( c\, k^{\, t} \right) \right\rceil+1
$$
and observe that $m_k$ monotonically tends to infinity, as $k\rightarrow \infty$.
In addition, we see that from $m_k\leq \exp\!\left( c\, k^{\, t} \right)+2$
it follows that 
$$
	\left[\frac{\ln(m_k-2)}{c}\right]^{1/t}
	\leq k,
$$
Hence, for $k\geq k_c':=\max\{k_c,\, \min\{k\in\N \sep m_k>2\} \}$, we obtain the estimate
$$
	\la_{m_k}\leq(1/L^k)^2 
	\leq L^{-2\,[ (\ln(m_k-2))/c ]^{1/t}},
$$
due to the ordering of $(\lambda_j)_{j\in\N}$ and the assumption that $L>1$.
Now it is easy to see that for 
$k\geq k_c^{''}:=\max\{k_c', \, \min\{k\in\N \sep \left[(\ln(m_k-2))/(\ln m_k)\right]^{1/t}\geq 1/2\}\}$
we have
$$
	\frac{\ln \la_{m_k}^{-1}}{\ln^{1/t} m_k} 
	\geq \frac{\ln L}{c^{1/t}}.
$$

For $n\geq k_c^{''}$ let us define $k(n):=\max\{k\in\N \sep m_k\leq n\}$.
Then, clearly, $m_{k(n)}\leq n<m_{k(n)+1}$ and $\lambda_{n} \leq \lambda_{m_{k(n)}}$, so that
$$
	\frac{\ln\la_n^{-1}}{\ln^{1/t} n}
	> \frac{\ln\la_{m_{k(n)}}^{-1}}{\ln^{1/t} m_{k(n)+1}}
	=\frac{\ln\la_{m_{k(n)}}^{-1}}{\ln^{1/t} m_{k(n)}}
	\left(\frac{\ln m_{k(n)}}{\ln m_{k(n)+1}}\right)^{1/t}
	\quad \text{for all} \quad n\geq k_c^{''}.
$$
Observe that for sufficiently large $k$, say for $k\geq K_c$, we have
$$
	\frac{\ln m_k}{\ln m_{k+1}}
	\geq \frac{c\, k^{\, t}}{2\,c\,(k+1)^t}
	=\frac{1}{2}\left(\frac{k}{k+1}\right)^t
	\geq\frac{1}{4}.
$$
This implies that for all $n\in\mathbb{N}$ such that $n\geq k_c^{''}$ and $k(n)\geq \max(k_c^{''}, \, K_c)$ 
we have 
$$
	\frac{\ln \la_n^{-1}}{\ln^{1/t} n}
	> \frac{\ln L}{(4 \, c)^{1/t}}.
$$
Hence, for every fixed $c>0$ the last inequality holds for all but a finite number of 
natural numbers~$n$. In conclusion, this shows the ``if''-part of the assertion.

\emph{Step 2 (Necessity).}
Note that $L>1$ implies that $1/L^k$ tends to zero, as $k\to\infty$.
Thus, w.l.o.g.\ we can assume that $n^{\mathrm{abs}}((1/L)^k,S_1)$ grows without bound, as $k$ approaches infinity (otherwise there is nothing to show). 
Now observe that our assumption is equivalent to 
\begin{equation*}
	\lim_{n\rightarrow\infty} \frac{\ln n}{\ln^t \! \la_n^{-1}}
	= 0,
\end{equation*}
i.e., $\ln n = o\!\left(\ln^t \la_n^{-1}\right)$, as $n\rightarrow\infty$.
Consequently, we obtain
\begin{equation}\label{est:nS1}
	\ln n^{\mathrm{abs}} \! \left( 1/L^k, \, S_1 \right) 
	= o\! \left( \ln^t \! \lambda_{n^{\mathrm{abs}}(1/L^k, \, S_1)}^{-1} \right), 
	\qquad \text{as} \quad 	k \rightarrow \infty.
\end{equation}
In addition, the general relation \link{eq:explicit_n} applied for $S_1$ yields $1/L^{2k} < \lambda_{n^{\mathrm{abs}}(1/L^k, \, S_1)}$ for all $k\in\N$. 
Hence, we can estimate
\begin{align*}
	\ln^t \! \lambda_{n^{\mathrm{abs}}(1/L^k, \, S_1)}^{-1}
	< \ln^t \! L^{2k} = c\, k^t
\end{align*}
with $c=(2 \ln L)^t > 0$ independent $k$.
Combining this with \link{est:nS1} finally proves the ``only~if''-part of \autoref{lemacik4}.
\end{proof}
	 
	\item[(ii)] We can get rid of the additional logarithm in the second part of the sufficient condition \link{suff_cond} at the expense of a slightly larger power of $\ell$ in the maximum. That is, the limit condition can be replaced by
		\begin{equation*}
			\lim_{\e^{-1}+d\to\infty} \frac{\max_{\ell=1,\ldots,d}\limits \left[ \ell^{\,p} \cdot \ln n^{\mathrm{abs}}\!\left( ( \e / \la_1^{d/2} )^{1/\ell},\, S'_1 \right) \right]}{\e^{-s}+d^{\,t}}
		= 0
		\qquad \text{with} \quad 1<p<t.
		\end{equation*}
\end{itemize}
\end{remark}

\subsection{The power of function values for multivariate approximation}\label{subsect:standard}
This section is based on ideas from \cite{NW12} 
which relate the power of general linear information (from the class $\lall$) with the power of function values
($\lstd$) for certain multivariate approximation problems.

Let $H_d$ be some non-trivial, separable reproducing kernel Hilbert space 
of $d$-variate functions defined (almost everywhere) 
on a set $D_d\subset\mathbb{R}^d$ of positive Lebesgue measure. 
Moreover, for every $d\in\mathbb{N}$ let the target space $G_d:=L_2(D_d,\rho_d)$
be the space of functions over $D_d$ which are square-integrable w.r.t.\ some probability density
$\rho_d$. 
For all $d$ we assume that $H_d$ is continuously embedded
in $G_d$. Then  
the multivariate approximation problem 
${\rm APP}:=({\rm APP}_d)_{d\in\mathbb{N}}$ is 
defined by
\begin{equation}\label{def:APP}
	{\rm APP}_d\colon H_d\rightarrow G_d, \qquad \text{where} \quad {\rm APP}_d f:=f \quad \text{for all}\quad f\in H_d, \, d\in\N.
\end{equation}

For our purposes, the most relevant results from \cite{NW12} for this problem are for weak tractability. 
It turns out that in many cases weak tractability for the class
$\lall$ implies weak tractability for the class $\lstd$.  
Interestingly enough, the same proofs can be also applied for
$(s,t)$-weak tractability since they rely on estimates of the form 
\begin{equation*}
	n^{\mathrm{crit}}(\e,{\rm APP}_d;\lstd) 
	\leq n^{\mathrm{crit}}(\e/C,{\rm APP}_d;\lall) \cdot r^{\mathrm{crit}}(\e,{\rm APP}_d),
	\qquad {\mathrm{crit}} \in \{\mathrm{abs},\mathrm{norm}\},
\end{equation*}
for some $C\geq 1$ and a known function $r^{\mathrm{crit}}(\e,{\rm APP}_d)$.
Therein $n^{\mathrm{crit}}(\e,{\rm APP}_d;\Lambda)$ denotes the information complexity of ${\rm APP}$ with respect to information from the class $\Lambda\in\{\lstd,\lall\}$ in the worst case setting for the absolute or normalized error criterion, respectively.
Hence, we can present relations
between $(s,t)$-weak tractability for the classes $\lall$ and $\lstd$ 
with very brief proofs which will allow us to keep this section short.

It is known that if the operator $W_d:={\rm APP}_d^* \circ {\rm APP}_d$ 
has infinite trace for some $d\in\mathbb{N}$, 
then there is no non-trivial relation between 
tractabilities for the classes $\lall$ and $\lstd$; 
see \cite[Corollary~26.2]{NW12}. Therefore we assume that 
\begin{equation*}
	{\rm trace}(W_d):=\sum_{j=1}^{\dim H_d} \lambda_{d,j} < \infty
	\qquad \text{for every}\quad d\in\mathbb{N},
\end{equation*}
where $\lambda^{(d)}:=(\lambda_{d,j})_{j=1}^{\dim H_d}$ again denotes the ordered sequence of eigenvalues of $W_d$.
In addition, let us assume that ${\rm trace}(W_d)>0$ for all $d\in\N$, i.e., suppose that ${\rm APP}$ is not trivial.
As before, we let ${\rm CRI}_d$ be defined by~\link{def:CRI}.
Note that the finite trace condition immediately implies that 
\begin{equation*}
	\lambda_{d,j} \in \mathcal{O}(j^{-1}), 
	\qquad \text{as} \quad j\to\infty,
\end{equation*}	
which is a much stronger condition than $\lambda_{d,j} \in o(\ln^{-2/s}\! j)$ which was needed for $(s,t)$-weak tractability with $s>0$; cf.\ \link{cond:1} in \autoref{thm:general}.

\begin{theorem}\label{twierdzenieapp}
	Consider the multivariate approximation problem \link{def:APP} w.r.t.\ the worst case setting for the absolute or normalized error criterion. 
	Let $s,t>0$ and assume that the trace of~$W_d$ is finite and non-trivial for all $d\in\mathbb{N}$. 
	If, additionally,
	\begin{equation}\label{cond:trace}
		\lim_{d\to\infty}\frac{\ln\ceil{ {\rm trace}(W_d)/{\rm CRI}_d }}{d^{\,t}}=0,
	\end{equation}
	then $(s,t)$-weak tractabilities of ${\rm APP}$ 
	for the classes $\lall$ and $\lstd$ are equivalent.
\end{theorem}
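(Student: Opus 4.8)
The plan is to prove the two implications separately; only one of them makes any use of the hypotheses. The direction ``$\lstd$ implies $\lall$'' is immediate and needs neither \link{cond:trace} nor finiteness of the trace: since $\lstd\subseteq\lall$, every algorithm that uses function values is in particular also an algorithm that uses arbitrary continuous linear functionals, whence $n^{\mathrm{crit}}(\e,{\rm APP}_d;\lall)\le n^{\mathrm{crit}}(\e,{\rm APP}_d;\lstd)$ for all $\e$ and $d$, and thus $(s,t)$-weak tractability for $\lstd$ trivially forces $(s,t)$-weak tractability for $\lall$. All the real work goes into the converse.

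For the converse the idea is to substitute the estimate from \cite[Chapter~26]{NW12},
\[
	n^{\mathrm{crit}}(\e,{\rm APP}_d;\lstd)\le n^{\mathrm{crit}}(\e/C,{\rm APP}_d;\lall)\cdot r^{\mathrm{crit}}(\e,{\rm APP}_d),
\]
into \autoref{defi:st-WT}. First I would recall the explicit shape of the factor: $r^{\mathrm{crit}}(\e,{\rm APP}_d)$ is bounded above by a fixed power of $\ceil{{\rm trace}(W_d)/(\e^2\,{\rm CRI}_d)}$, so that with constants $c_1,c_2>0$ independent of $\e$ and $d$, using $\ceil{ab}\le\ceil{a}\ceil{b}$ together with $\e<1$, one has
\[
	\ln r^{\mathrm{crit}}(\e,{\rm APP}_d)\le c_1\ln\ceil{{\rm trace}(W_d)/(\e^2\,{\rm CRI}_d)}+c_2\le c_1\ln\ceil{{\rm trace}(W_d)/{\rm CRI}_d}+2c_1\ln\e^{-1}+c_2'.
\]
Taking logarithms in the estimate above and dividing by $\e^{-s}+d^{\,t}$ then yields, for every admissible pair $(\e,d)$,
\[
	\frac{\ln n^{\mathrm{crit}}(\e,{\rm APP}_d;\lstd)}{\e^{-s}+d^{\,t}}\le\frac{\ln n^{\mathrm{crit}}(\e/C,{\rm APP}_d;\lall)}{\e^{-s}+d^{\,t}}+\frac{c_1\ln\ceil{{\rm trace}(W_d)/{\rm CRI}_d}}{\e^{-s}+d^{\,t}}+\frac{2c_1\ln\e^{-1}+c_2'}{\e^{-s}+d^{\,t}}.
\]

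Next I would show that each of the three summands on the right tends to zero along an arbitrary admissible double sequence $((\e_k,d_k))_{k\in\N}$ with $\e_k^{-1}+d_k\nach\infty$. For the first summand, note that $((\e_k/C,d_k))_{k\in\N}$ is again admissible (the initial error $\e_d^{\mathrm{init}}$ does not depend on the information class, and $\e_k/C\le\e_k$ because $C\ge1$) with $(\e_k/C)^{-1}+d_k\nach\infty$; since $(\e/C)^{-s}+d^{\,t}=C^s\e^{-s}+d^{\,t}\le C^s(\e^{-s}+d^{\,t})$ one bounds this summand by $C^s\cdot\ln n^{\mathrm{crit}}(\e_k/C,{\rm APP}_{d_k};\lall)/((\e_k/C)^{-s}+d_k^{\,t})$, which vanishes by the assumed $(s,t)$-weak tractability for $\lall$. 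For the third summand, $\e^{-s}+d^{\,t}\ge\e^{-s}$ annihilates $\ln\e^{-1}$ whenever $\e_k^{-1}\nach\infty$, while if $\e_k^{-1}$ stays bounded along a subsequence then necessarily $d_k\nach\infty$ there and $\e^{-s}+d^{\,t}\ge d^{\,t}\nach\infty$ dominates the then bounded numerator. The middle summand is treated by passing to a subsequence on which either $d_k\nach\infty$ --- then $\e^{-s}+d^{\,t}\ge d^{\,t}$ together with \link{cond:trace} forces it to $0$ --- or $d_k$ stays bounded --- then ${\rm trace}(W_{d_k})<\infty$ for the finitely many relevant dimensions, and ${\rm CRI}_{d_k}$ is either $1$ (absolute case) or bounded below by $\min_d\lambda_{d,1}>0$ over those dimensions (normalized case), so the numerator is bounded while $\e^{-s}+d^{\,t}\nach\infty$. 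Collecting the three estimates establishes \link{def:st-weak}, i.e.\ $(s,t)$-weak tractability of ${\rm APP}$ for $\lstd$.

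I do not anticipate a genuine obstacle: once the explicit form of $r^{\mathrm{crit}}$ from \cite[Chapter~26]{NW12} is available, the argument is the standard ``insert the upper complexity bound into the tractability definition'' routine. The only point demanding mild care is the bookkeeping for double sequences along which $d_k$ does not tend to infinity; there one must genuinely use that, for each fixed $d$, the trace of $W_d$ is finite and positive (and ${\rm CRI}_d>0$), so that $\ln r^{\mathrm{crit}}(\e_k,{\rm APP}_{d_k})=O(\ln\e_k^{-1})$, which is then absorbed by $\e_k^{-s}$. It is precisely finiteness of the trace for \emph{every} $d$ --- rather than mere growth control in $d$ --- that guarantees this.
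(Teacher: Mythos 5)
Your proposal is correct and takes essentially the same route as the paper: both use the observation that $\lstd\subseteq\lall$ makes one implication trivial, and both substitute the bound $n^{\mathrm{crit}}(\e,{\rm APP}_d;\lstd)\le n^{\mathrm{crit}}(\e/C,{\rm APP}_d;\lall)\cdot r^{\mathrm{crit}}(\e,{\rm APP}_d)$ from \cite[Theorem~26.11]{NW12} (with $C=\sqrt{2}$ and $r^{\mathrm{crit}}(\e,{\rm APP}_d)=4\e^{-2}\ceil{{\rm trace}(W_d)/{\rm CRI}_d}$), split the resulting sum into three pieces, and show each vanishes along admissible double sequences. The only difference is cosmetic: you work with generic constants and are somewhat more explicit about the subsequence bookkeeping for the case $d_k$ bounded (where finiteness of the trace for each fixed $d$ is needed), a point the paper's proof leaves implicit in its final sentence.
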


\begin{proof}
This theorem corresponds to \cite[Theorem 26.11]{NW12} for weak tractability. 
Since in our setting $\lstd\subset\lall$, it is enough to show that $(s,t)$-weak tractability for the class $\lall$ implies $(s,t)$-weak tractability for the class $\lstd$. 
This implication holds because, as shown in the proof of \cite[Theorem 26.11]{NW12}, we have 
\begin{equation*}
	n^{\mathrm{crit}}(\e,{\rm APP}_d;\lstd)
	\leq n^{\mathrm{crit}}(\e/\sqrt{2},{\rm APP}_d;\lall) \cdot 4\, \e^{-2} \, \ceil{\frac{{\mathrm{trace}}(W_d)}{{\rm CRI}_d}},
	\qquad \mathrm{crit}\in\{\mathrm{abs}, \mathrm{norm}\},
\end{equation*}
for all $\e\in(0,1)$ and $d\in\N$.
Therefore,
\begin{align*}
	\frac{\ln n^{\mathrm{crit}}(\e,{\rm APP}_d;\lstd)}{\e^{-s}+d^{\,t}}
	&\leq \frac{\ln n^{\mathrm{crit}}(\e/\sqrt{2},{\rm APP}_d;\lall)}{2^{-s/2}\, (\e/\sqrt{2})^{-s} + d^{\,t}} 
		+ \frac{\ln 4 + 2 \, \ln\e^{-1}}{\e^{-s}+d^{\,t}} 
		+ \frac{\ln\ceil{ {\rm trace}(W_d)/{\rm CRI}_d }}{\e^{-s}+d^{\,t}} 
\end{align*}
tends to zero, as $\e^{-1}+d$ approaches infinity, since we assumed that $s,t>0$.
\end{proof}

\begin{remark}
We note in passing that for $\min\{s,t\}=0$ the above proof fails: if $s=0$, then the term
\begin{equation*}
	\frac{\ln 4 + 2 \, \ln\e^{-1}}{\e^{-s}+d^{\,t}} 
\end{equation*}
explodes for double sequences $((\e_k,d_k))_{k\in\N}$ with $\e_k^{-1}+d_k\to\infty$ and uniformly bounded $d_k$.
Moreover, for $t=0$ and double sequences with constant $\e_k$ it does not tend to zero as well.
\end{remark}

As a consequence of \autoref{twierdzenieapp} we conclude the following corollary for the case where $\mathrm{APP}$ is a linear tensor product problem:

\begin{corollary}
	Let $s,t>0$ and consider the multivariate approximation problem for tensor product source spaces $H_d(D_d)=\bigotimes_{\ell=1}^d H_1(D_1)$ and tensor product densities $\rho_d=\bigotimes_{\ell=1}^d \rho_1$ in the worst case setting. Furthermore assume $0<{\rm trace}(W_1)<\infty$. Then
	\begin{itemize}
		\item for the absolute error criterion $(s,t)$-weak tractability w.r.t.\ the class $\lall$ is equivalent to $(s,t)$-weak tractability for the class $\lstd$ if
			\begin{equation*}
				t>1 \quad \text{or} \quad {\rm trace}(W_1) \leq 1.
			\end{equation*}
		\item for the normalized error criterion $(s,t)$-weak tractability w.r.t.\ the class $\lall$ is equivalent to $(s,t)$-weak tractability for the class $\lstd$ if
			\begin{equation*}
				t>1 \quad \text{or} \quad \lambda_2 = 0.
			\end{equation*}
	\end{itemize}
\end{corollary}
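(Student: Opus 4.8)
The plan is to deduce the corollary directly from \autoref{twierdzenieapp} by checking that all of its hypotheses are met in the tensor product situation under the stated side conditions. The starting point is the elementary but crucial observation that the trace is multiplicative under tensor products: since the eigenvalues of $W_d$ are the products $\lambda_{d,j}=\prod_{\ell=1}^d\lambda_{j_\ell}$ over multiindices $j=(j_1,\dots,j_d)\in\N^d$, the sum over $\N^d$ factorizes and yields
\begin{equation*}
	{\rm trace}(W_d)=\sum_{j\in\N^d}\prod_{\ell=1}^d\lambda_{j_\ell}=\left(\sum_{n\in\N}\lambda_n\right)^{d}=\big({\rm trace}(W_1)\big)^{d}.
\end{equation*}
In particular, the assumption $0<{\rm trace}(W_1)<\infty$ guarantees $0<{\rm trace}(W_d)<\infty$ for every $d\in\N$, so the finiteness and non-triviality of the trace required by \autoref{twierdzenieapp} hold automatically. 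All that remains is to verify the growth condition \link{cond:trace}, namely $\ln\ceil{{\rm trace}(W_d)/{\rm CRI}_d}=o(d^{\,t})$, separately for the two error criteria.

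For the absolute error criterion one has ${\rm CRI}_d=1$, hence ${\rm trace}(W_d)/{\rm CRI}_d=T^d$ with $T:={\rm trace}(W_1)$. If $T\leq 1$, then $0<T^d\leq 1$, so $\ceil{T^d}=1$ and the numerator in \link{cond:trace} is identically zero. If instead $t>1$, then using $\ceil{x}\leq 2x$ for $x\geq 1$ (and $\ceil{x}=1$ for $0<x\leq 1$) we obtain $\ln\ceil{T^d}\leq\ln 2+d\,(\ln T)_+$, which is $o(d^{\,t})$ because $t>1$. Either way \link{cond:trace} holds, and \autoref{twierdzenieapp} yields the claimed equivalence of $(s,t)$-weak tractability for $\lall$ and for $\lstd$.

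For the normalized error criterion one has ${\rm CRI}_d=\lambda_{d,1}=\lambda_1^d$, hence ${\rm trace}(W_d)/{\rm CRI}_d=(T/\lambda_1)^d$, where $T/\lambda_1=\sum_{n}\lambda_n/\lambda_1\geq 1$, with equality exactly when $\lambda_2=\lambda_3=\dots=0$. Thus if $\lambda_2=0$ the quotient equals $1$ for all $d$ and the numerator in \link{cond:trace} again vanishes identically, while if $t>1$ the same estimate as above gives $\ln\ceil{(T/\lambda_1)^d}\leq\ln 2+d\,\ln(T/\lambda_1)=o(d^{\,t})$. Hence \link{cond:trace} is satisfied and \autoref{twierdzenieapp} applies once more. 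There is essentially no obstacle in this argument; the only point deserving a moment's care is the behaviour of $\ceil{\,\cdot\,}$ in the borderline regimes $T\leq 1$ and $\lambda_2=0$, where one must use $\ceil{x}=1$ for $0<x\leq 1$ rather than the crude bound $\ceil{x}\leq 2x$, so that the numerator of \link{cond:trace} is literally $0$ — and, of course, the multiplicativity of the trace, which is where the tensor product hypothesis enters.
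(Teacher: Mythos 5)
Your proof is correct and takes essentially the same route as the paper: apply \autoref{twierdzenieapp}, use the multiplicativity ${\rm trace}(W_d)=({\rm trace}(W_1))^d$ and ${\rm CRI}_d={\rm CRI}_1^d$, and then verify the growth condition~\link{cond:trace} under each of the stated side conditions. The one small difference is that you treat the ceiling more carefully than the paper does: the paper replaces $\ln\ceil{({\rm trace}(W_1)/{\rm CRI}_1)^d}$ by $d\ln\ceil{{\rm trace}(W_1)/{\rm CRI}_1}$ inside the limit, which is not a literal identity (although it has the same vanishing behavior), whereas you split into the regimes $x\leq 1$ (where $\ceil{x}=1$ gives a numerator of exactly zero) and $x\geq 1$ (where $\ceil{x}\leq 2x$ gives the $o(d^{\,t})$ bound for $t>1$). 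That extra care is harmless and in fact slightly tightens the argument.
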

\begin{proof}
To prove the claim we like to apply \autoref{twierdzenieapp}. 
For this purpose, we need to check~\link{cond:trace}.
Note that for linear tensor product problems it holds
\begin{equation*}
	{\rm trace}(W_d)=({\rm trace}(W_1))^{\,d}
	\qquad \text{and} \qquad {\rm CRI}_d={\rm CRI}_1^{\,d}.
\end{equation*}
Hence, 
$$
	\lim_{d\to\infty}\frac{\ln \ceil{ {\rm trace}(W_d)/{\rm CRI}_d } }{d^{\,t}}
	=\lim_{d\to\infty} d^{1-t} \, \ln \ceil{ {\rm trace}(W_1)/{\rm CRI}_1 }
$$
equals zero if and only if $t>1$ or ${\rm trace}(W_1)/{\rm CRI}_1 \leq 1$. (Remember that ${\rm trace}(W_1)>0$!)
For the absolute error criterion the latter condition is satisfied if the trace of $W_1$ is bounded by one while for the normalized error criterion $\lambda_2$ needs to be zero.
\end{proof}

We conclude the discussion with the observation that the conditions \link{cond:trace} and $t>1$, respectively, are sharp in the sense that they cannot be dropped in general. 
To prove this, we refer to an example given in \cite[Subsection 26.4.1]{NW12}. 
Therein $H_d$ coincides with the $d$-fold tensor product of some univariate Korobov space defined on $D_1:=[0,1]$. For the sequence of univariate eigenvalues it holds
\begin{equation*}
	0 < \lambda_2 < \lambda_1 = 1 < \mathrm{trace}(W_1)<\infty
\end{equation*}
and the density $\rho_1$ is assumed to be identically $1$.
Then it can be shown that the approximation problem under consideration is quasi-polynomially tractable w.r.t.\ the class $\Lambda^{\mathrm{all}}$ while it suffers from the curse of dimensionality when dealing with information from $\Lambda^{\mathrm{std}}$. This yields that we have $(s,t)$-weak tractability w.r.t.\ $\Lambda^{\mathrm{all}}$ for all $s,t>0$ while it holds $(s,t)$-weak tractability w.r.t.\ $\Lambda^{\mathrm{all}}$ only if \link{cond:trace} is satisfied, i.e., if $t>1$.

\section{Examples}\label{sect:applications}
In this final section we illustrate our new notion of $(s,t)$-weak tractability by means of two more or less classical problems which recently attracted some attention in information-based complexity.
In \autoref{sect:sobolev} we deal with approximation problems of Sobolev embeddings, whereas \autoref{sect:int} is concerned with the integration problem for a class of smooth functions.

\subsection{Approximation of Sobolev embeddings}\label{sect:sobolev}
We follow the lines of \cite{KueSicUll14} and consider the approximation problem
\begin{gather}\label{prob:id1}
	\id_d \colon H^{\alpha,\square} (\mathbb{T}^d) \nach L_2(\mathbb{T}^d), \quad d\in\N,
\end{gather}
w.r.t.\ the worst case setting and information from $\Lambda^{\mathrm{all}}$.
Therein $H^{\alpha,\square} (\mathbb{T}^d)$ denotes the isotropic Sobolev spaces of periodic functions over the $d$-dimensional torus $\mathbb{T}^d=[0,2\pi]^d$ with smoothness $\alpha \geq 0$.
We use the symbol $\square\in\{+,*,\sharp\}$ to distinguish the following (equivalent) norms based on Fourier coefficients 
\begin{equation*}
	c_k(f) := \frac{1}{(2\pi)^{d/2}} \int_{\mathbb{T}^d} f(x)\, \exp\left(-\i\, kx\right) \d x, \qquad k\in\Z^d,
\end{equation*}
see \cite[Definition~2.2]{KueSicUll14} for details:
\begin{itemize}
\item natural norm (for $\alpha\in\N$ all derivatives of order at most $\alpha)$:
\begin{gather*}
	\norm{f \sep H^{\alpha,+} (\mathbb{T}^d)}
	:= \left[  \sum_{k\in\Z^d} \abs{c_k(f)}^2 \left( 1+ \sum_{j=1}^d \abs{k_j}^2 \right)^\alpha \right]^{1/2},
\end{gather*}
\item modified natural norm (for $\alpha \in \N$ $L_2$-norm and highest order derivatives):
\begin{gather*}
	\norm{f \sep H^{\alpha,*} (\mathbb{T}^d)}:= \left[  \sum_{k\in\Z^d} \abs{c_k(f)}^2 \left( 1+ \sum_{j=1}^d \abs{k_j}^{2\alpha} \right) \right]^{1/2},
\end{gather*}
\item auxiliary norm:
\begin{gather*}
	\norm{f \sep H^{\alpha,\sharp} (\mathbb{T}^d)}:= \left[  \sum_{k\in\Z^d} \abs{c_k(f)}^2 \left( 1+ \sum_{j=1}^d \abs{k_j} \right)^{2\alpha} \right]^{1/2}.
\end{gather*}
\end{itemize}
The norm in the target space of square-integrable functions on the $d$-dimensional torus, $L_2(\mathbb{T}^d)$, is simply given by
\begin{equation*}
	\norm{f \sep L_2(\mathbb{T}^d)}
	:=\left[  \sum_{k\in\Z^d} \abs{c_k(f)}^2 \right]^{1/2},
\end{equation*}
i.e., for $d\in\N$ we set $L_2(\mathbb{T}^d):=H^{0,+}(\mathbb{T}^d)=H^{0,*}(\mathbb{T}^d)=H^{0,\sharp}(\mathbb{T}^d)$.

The main aim of \cite{KueSicUll14} was it to 
investigate sharp bounds on the corresponding \emph{approximation numbers}, defined by
\begin{equation*}
	a_{n,d} 
	:= a_{n,d}^{\alpha,\square} 
	:= \inf_{\rank{A}<n} \sup_{\norm{f\sep H^{\alpha,\square}(\mathbb{T}^d)} \leq 1} \norm{\id_d f - A f \sep L_2(\mathbb{T}^d)}, 
	\qquad n,d\in\N,
\end{equation*}
where $\alpha>0$ and $\square\in\{+,*,\sharp\}$ are assumed to be chosen fixed.
As explained already in the introduction in IBC we usually study the closely related \emph{$n$th minimal (worst case) error}, given by
\begin{equation}\label{eq:e_and_a}
	e(n,d) = e^{\alpha,\square}(n,d) = a_{n+1,d}^{\alpha,\square}
	\qquad \text{for all} \qquad n\in\N_0, \quad d\in\N,
\end{equation}
as well as the corresponding information complexity $n^{\mathrm{crit}}(\epsilon,\id_d)$ defined in \link{def:abs_crit} and \link{def:norm_crit}, respectively.
In the following we slightly abuse the notation and write $n^{\alpha,\square}(\epsilon,\id_d)$ for these information complexities. Note that this is reasonable because for this problem we do not need to distinguish between the absolute and the normalized error criterion since
the initial error $\epsilon_d^{\mathrm{init}}$ equals $1$ for all $d\in\N$, every $\alpha > 0$, and each $\square\in\{+,*,\sharp\}$.

Among other things the authors of \cite{KueSicUll14} found that these three approximation problems never suffer from the curse of dimensionality; cf. \cite[Theorem~5.6]{KueSicUll14}.
Moreover, they investigated (almost sharp) conditions on $\alpha$ such that weak tractability holds.
We extend their results by proving the following

\begin{theorem}\label{thm:sobolev}
Let $\alpha>0$ and consider the approximation problem defined in \link{prob:id1}.
\begin{itemize}
	\item In the case $\square=+$ the problem is $(s,t)$-weakly tractable if and only if 
			\begin{equation*}
			s > \frac{2}{\alpha} \,\text{ and }\, t>0 
			\qquad \text{or} \qquad
			s > 0 \,\text{ and }\, t>1. 
		\end{equation*}
	\item In the case $\square=*$
	\begin{itemize}
		\item[1.)] $(s,t)$-weak tractability implies that 
			\begin{equation*}
				s > 2 \,\text{ and }\, t>0 
				\qquad \text{or} \qquad
				s > 0 \,\text{ and }\, t>1. 
			\end{equation*}
		\item[2.)] the conditions 
			\begin{equation*}
				s > \max\!\left\{2, \frac{1}{\alpha} \right\} \,\text{ and }\, t>0 
				\qquad \text{or} \qquad
				s > 0 \,\text{ and }\, t>1 
			\end{equation*}		 
			imply $(s,t)$-weak tractability.
	\end{itemize}
	\item In the case $\square=\sharp$ the problem is $(s,t)$-weakly tractable if and only if 
		\begin{equation*}
			s > \frac{1}{\alpha} \,\text{ and }\, t>0 
			\qquad \text{or} \qquad
			s > 0 \,\text{ and }\, t>1. 
		\end{equation*}
\end{itemize}
\end{theorem}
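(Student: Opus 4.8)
The plan is to argue directly with the (squared) singular values of the three embeddings, which are diagonal in the Fourier basis: $\lambda_{d,k}=w_\alpha^\square(k)^{-1}$ for $k\in\Z^d$, where $w_\alpha^\square(k)$ is $(1+\sum_j|k_j|^2)^\alpha$, $1+\sum_j|k_j|^{2\alpha}$, or $(1+\sum_j|k_j|)^{2\alpha}$. Since the constant mode is fixed, $\epsilon_d^{\mathrm{init}}=1$ and $\mathrm{CRI}_d=1$ for all $d$, so the two error criteria coincide and \link{eq:explicit_n} collapses to the counting identity $n^{\alpha,\square}(\epsilon,\id_d)=\#\{k\in\Z^d \sep w_\alpha^\square(k)<\epsilon^{-2}\}$. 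Thus $n^{\alpha,+}$ counts integer points of a Euclidean ball of squared radius $\Theta^+(\epsilon):=\epsilon^{-2/\alpha}-1$, $n^{\alpha,\sharp}$ of an $\ell_1$-ball of radius $\Theta^{\sharp}(\epsilon):=\epsilon^{-1/\alpha}-1$, and $n^{\alpha,*}$ of the sub-$\ell_1$ body $\{\sum_j|k_j|^{2\alpha}<\Theta^*(\epsilon)\}$ with $\Theta^*(\epsilon):=\epsilon^{-2}-1$. I would first discard the degenerate parameters: $\lambda_{d,1}=1$ and $\lambda_{d,2}$ equals a positive constant independent of $d$ (attained at $k=\pm e_1$), so \autoref{thm:general0} together with \autoref{rem:parameter} rules out $(0,t)$- and $(s,0)$-weak tractability, in agreement with the claimed ranges; hence from now on $s,t>0$. (These embeddings are not tensor products, and condition \link{cond:1} of \autoref{thm:general} holds for free because for fixed $d$ the $\lambda_{d,j}$ decay polynomially in $j$; all the content lies in the uniformity in $d$.)

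The heart of the argument is a pair of matching two-sided bounds for $\ln n^{\alpha,\square}(\epsilon,\id_d)$, obtained by splitting the counting function according to the number $i$ of nonzero coordinates: the support costs $\binom di\asymp(\mathrm e\,d/i)^i$, after which one counts admissible nonzero blocks. For $\square\in\{+,\sharp\}$ a block of $i$ coordinates with $\sum|k_j|\le\Theta^{\sharp}$ (resp.\ $\sum|k_j|^2\le\Theta^{+}$, which forces $\sum|k_j|\le\Theta^{+}$) contributes at most $\binom{\lfloor\Theta^\square\rfloor}{i}$; optimising over $i\le\min\{\lfloor\Theta^\square\rfloor,d\}$ gives, up to lower order terms, $\ln n^{\alpha,\square}(\epsilon,\id_d)\asymp\min\{\Theta^\square,d\}\bigl(1+\ln(\max\{\Theta^\square,d\}/\min\{\Theta^\square,d\})\bigr)$; one may instead quote the eigenvalue estimates of \cite{KueSicUll14}. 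The key points are that this quantity is at most linear in $d$ up to logarithmic factors, uniformly in $\epsilon$, and genuinely of order $d$ precisely on the diagonal $\Theta^\square\asymp d$ (realised already by the vectors in $\{-1,0,1\}^d$ with $\asymp\Theta^\square$ nonzero entries). For $\square=*$ the lower bound has the same shape, but when $\alpha<\tfrac12$ the upper bound is more delicate: a single coordinate may be as large as $\epsilon^{-1/\alpha}\gg\Theta^*$, so bounding a block by its cardinality alone loses a factor $(\epsilon^{-1/\alpha})^{i}$, and one only obtains the weaker upper estimate that underlies the threshold $\max\{2,1/\alpha\}$ rather than $2$; this is why the case $\square=*$ is not resolved to an ``iff''.

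For sufficiency I would insert the upper bound into \link{def:st-weak} and estimate along an arbitrary admissible sequence $((\epsilon_k,d_k))_k$, splitting according to whether $\Theta^\square(\epsilon_k)\le d_k$ or not and, within each regime, whether $\epsilon_k\to0$ or $\epsilon_k$ stays bounded away from $0$ (the latter forcing $d_k\to\infty$). In each case the numerator is dominated by a sum of one power of $\epsilon_k^{-1}$ and one power of $d_k$, each possibly times a $\ln d_k$- or $\ln\epsilon_k^{-1}$-factor; these are absorbed by Young's inequality, using that $s$ exceeds the threshold $s^\square\in\{2/\alpha,\max\{2,1/\alpha\},1/\alpha\}$ (which makes $\Theta^\square(\epsilon)/\epsilon^{-s}\to0$), that logarithms are beaten by any positive power, and that the residual pure power of $d_k$ is controlled either by $t>1$ (the second branch of the hypotheses) or, in the first branch, by the regime inequality $d_k\lesssim\Theta^\square(\epsilon_k)$, which trades $d_k$ for a power of $\epsilon_k^{-1}$ and returns to the previous case.

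For necessity it remains to exclude $(s,t)$-weak tractability in the range $0<s\le s^\square_{\mathrm{nec}}$, $0<t\le1$, with $s^\square_{\mathrm{nec}}=2/\alpha,2,1/\alpha$ for $\square=+,*,\sharp$. Here I would test \link{def:st-weak} on the balanced sequence $d_k:=k$ with $\epsilon_k$ such that $\Theta^\square(\epsilon_k)\asymp k$ --- i.e.\ $\epsilon_k\asymp k^{-\alpha/2},k^{-1/2},k^{-\alpha}$ respectively. Then the lower bound gives $\ln n^{\alpha,\square}(\epsilon_k,\id_k)\asymp k$, while $\epsilon_k^{-s}+d_k^{\,t}\asymp k^{\max\{\sigma^\square,t\}}$ with $\sigma^\square=\alpha s/2,s/2,\alpha s$; since $\sigma^\square\le1$ exactly when $s\le s^\square_{\mathrm{nec}}$, the quotient in \link{def:st-weak} stays bounded away from $0$, so $(s,t)$-weak tractability fails. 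Specialising to $s=t=1$ for $\square=*$ completes the weak-tractability statement of \cite[Theorem~5.5]{KueSicUll14}. The step I expect to be the main obstacle is the sharp upper estimate for $n^{\alpha,*}(\epsilon,\id_d)$ at low smoothness $\alpha<\tfrac12$ --- counting lattice points in the body $\{\sum_j|k_j|^{2\alpha}<\Theta^*(\epsilon)\}$, where the count is no longer governed by sparsity alone; this is the source of the residual gap for $\square=*$, everything else being a lengthy but routine case distinction.
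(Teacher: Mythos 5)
Your proposal is correct and proves the theorem, but it takes a more self-contained route than the paper. The paper's sufficiency argument establishes the lattice-count bound only for $\square=\sharp$ (quoting Lemmas~3.4 and~4.1 of \cite{KueSicUll14} for $C(m,d)\le 2^{\min\{d,m\}}\binom{m+d}{d}$), and then transfers it to $\square\in\{+,*\}$ via the norm-one embeddings $H^{\alpha,+}\hookrightarrow H^{\alpha/2,\sharp}$, $H^{\alpha,*}\hookrightarrow H^{\alpha,\sharp}$ (for $\alpha\le\tfrac12$), and $H^{\alpha,*}\hookrightarrow H^{1/2,*}$ (for $\alpha>\tfrac12$) from \cite[Lemma~2.3]{KueSicUll14}; for necessity it invokes the pre-asymptotic lower bounds on approximation numbers (Theorems~4.6, 4.12, 4.15 and Lemma~4.8 of \cite{KueSicUll14}). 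You instead derive two-sided bounds directly from the counting identity $n^{\alpha,\square}(\epsilon,\id_d)=\#\{k\in\Z^d : w_\alpha^\square(k)<\epsilon^{-2}\}$, splitting by sparsity; your lower bound on the balanced diagonal $\Theta^\square(\epsilon_k)\asymp d_k=k$ via $\{-1,0,1\}^d$-vectors replaces the cited approximation-number estimates. The Young's-inequality absorption step and the exclusion of $\min\{s,t\}=0$ via Theorem~\ref{thm:general0} / Remark~\ref{rem:parameter} (using $\lambda_{d,2}=2^{-\alpha}$ or $2^{-2\alpha}$, independent of $d$) are then the same in spirit as the paper's.

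One point worth flagging: you claim that for $\square=*$ and $\alpha<\tfrac12$ your block bound $(2\epsilon^{-1/\alpha})^i$ ``only obtains the weaker upper estimate that underlies the threshold $\max\{2,1/\alpha\}$.'' That understates what your own bound delivers. From $n^{\alpha,*}(\epsilon,\id_d)\le\sum_{i=0}^{\min\{\lfloor\Theta^*\rfloor,d\}}\binom{d}{i}\bigl(2\lfloor\epsilon^{-1/\alpha}\rfloor\bigr)^i$ with $\Theta^*=\epsilon^{-2}-1$ you get
\begin{equation*}
\ln n^{\alpha,*}(\epsilon,\id_d)\lesssim_\alpha \min\{\epsilon^{-2},d\}\,\bigl(\ln d+\ln\epsilon^{-1}\bigr),
\end{equation*}
and the extra $\frac1\alpha\ln\epsilon^{-1}$ is only a logarithmic factor: Young's inequality (with $\theta=2/s$) absorbs it exactly as in the $\sharp$-case, giving sufficiency already for $s>2$, $t>0$ for \emph{every} $\alpha>0$. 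In other words, your counting route would actually close the gap the paper acknowledges for $\alpha<\tfrac12$ in Remark following Theorem~\ref{thm:sobolev}; the paper's weaker threshold $\max\{2,1/\alpha\}$ is an artefact of the embedding $H^{\alpha,*}\hookrightarrow H^{\alpha,\sharp}$, which trades $\Theta^*\asymp\epsilon^{-2}$ for the larger $\Theta^\sharp\asymp\epsilon^{-1/\alpha}$. Since your method proves a strictly stronger sufficient condition than stated, the theorem as written certainly follows; the claim in your write-up is just overly modest.
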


\begin{remark}
Let us add some comments on the previous result before we present its proof.
\begin{itemize}
	\item[(i)] First of all note that \autoref{thm:sobolev} allows to characterize weak tractability in all three cases. 
In particular, we closed the gap in \cite[Theorem~5.5]{KueSicUll14} for the case $\square=+$.
Furthermore, in \cite[Proposition~5.1]{KueSicUll14} it was shown that all these approximation problems are never quasi-polynomially tractable, i.e., it does not hold that there exist constants $C,\tau > 0$ such that
\begin{equation*}
	n^{\alpha,\square}(\epsilon,\id_d) \leq C \, \exp\!\left[ \tau \left( 1+ \ln \epsilon^{-1} \right)\left(1+ \ln d\right) \right], 
	\qquad \epsilon\in(0,1),\, d\in\N;
\end{equation*}
see, e.g., \cite{GneWoz11}. 
Since quasi-polynomial tractability obviously is a stronger notion than uniform weak tractability, we improved also this result.
\begin{corollary}\label{cor:sobolevWT}
	Let $\alpha>0$ be fixed. Then the approximation problem defined above is 
	\begin{equation*}
	\text{weakly tractable} \qquad \text{if and only if} \qquad
		\begin{cases}
			\alpha > 2 & \text{for} \quad \square = +,\\
			\alpha > 1 & \text{for} \quad \square = \sharp.
		\end{cases}
	\end{equation*}
	If $\square=*$, then we never have weak tractability.
	Moreover, for each $\square \in \{+,*,\sharp\}$ we never have uniform weak tractability.
\end{corollary}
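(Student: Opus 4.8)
The plan is to read off \autoref{cor:sobolevWT} from \autoref{thm:sobolev} by specializing the parameters, using only the elementary equivalences collected in the Proposition in \autoref{sect:def}: $S$ is weakly tractable if and only if it is $(1,1)$-weakly tractable, $S$ is uniformly weakly tractable if and only if it is $(s,t)$-weakly tractable for \emph{all} $s,t>0$, and $(s,t)$-weak tractability is monotone in the two parameters. Hence no new estimates are needed; it only remains to chase the admissible parameter ranges.

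First I would treat weak tractability by setting $s=t=1$. For $\square=+$ the first bullet of \autoref{thm:sobolev} then asks for "$1>2/\alpha$ and $1>0$'' or "$1>0$ and $1>1$''; the second alternative is vacuous, so the criterion collapses to $\alpha>2$. For $\square=\sharp$ the third bullet collapses in the same fashion to $\alpha>1$. For $\square=*$ part~1.) of the second bullet supplies a \emph{necessary} condition, which at $s=t=1$ reads "$1>2$ or $1>1$''; since this never holds, $(1,1)$-weak tractability — hence weak tractability — is impossible for every $\alpha>0$.

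Next I would rule out uniform weak tractability for all finite $\alpha$ and all $\square$. Were it to hold, then $(s,t)$-weak tractability would hold for \emph{every} $s,t>0$; in particular, for $\square=*$ this already gives $(1,1)$-weak tractability, which was excluded in the previous step. For $\square\in\{+,\sharp\}$ I would instead test the pair $(s,t)=(s,1/2)$ with $s>0$ arbitrarily small: the characterization in \autoref{thm:sobolev} requires either "$s>2/\alpha$'' (resp.\ "$s>1/\alpha$'') together with $t>0$, or "$s>0$ and $t>1$''; the first alternative fails once $s<2/\alpha$ (resp.\ $s<1/\alpha$), and the second fails since $t=1/2\le 1$, a contradiction. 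The only point requiring a little care is that for $\square=*$ \autoref{thm:sobolev} provides a necessary and a (strictly stronger) sufficient condition rather than an equivalence, so both negative assertions of \autoref{cor:sobolevWT} must be read off from the necessary condition alone; as both assertions are negative this is harmless, and I anticipate no genuine obstacle in carrying out the proof.
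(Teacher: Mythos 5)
Your proposal is correct and matches the paper's intended derivation: the corollary is stated inside a remark without its own proof, and the authors clearly intend the reader to specialize \autoref{thm:sobolev} at $s=t=1$ (for weak tractability) and to invoke the equivalence between uniform weak tractability and $(s,t)$-weak tractability for all $s,t>0$ exactly as you do. You also correctly observe that for $\square=*$ only the necessary condition from part~1.) is needed, since both assertions to be proved are negative.
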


\item[(ii)] Observe that (in contrast to \autoref{cor:sobolevWT}) \autoref{thm:sobolev} shows \emph{how much} the problem is getting easier with increasing smoothness $\alpha$, at least if $\square\in\{+,\sharp\}$. 

\item[(iii)] Furthermore, it is interesting to see that for this type of problems $\alpha$ only influences $s$ and not $t$. 
In contrast, the necessary conditions for $(s,t)$-weak tractability in \cite[Corollary~1]{Wei14} only depend on $t$ and not on $s$.

\item[(iv)] In sharp contrast to the results for linear tensor product problems---see \autoref{subsect:tensor}---in all three cases of \autoref{thm:sobolev} the obtained conditions show some kind of trade-off between the tractability parameters $s$ and $t$: 
one the one hand, independently of the smoothness~$\alpha$, we can achieve an arbitrarily good (subexponential) dependence of the information complexity $n^{\alpha,\square}(\epsilon,\id_d)$ on the accuracy $\epsilon$,  provided that we allow $t$ to be larger than $1$ which corresponds to a (super-)exponential dependence on the dimension $d$. On the other hand, if we seek for a moderate growth of $n^{\alpha,\square}(\epsilon,\id_d)$ with $d$, then $s$, i.e., the dependence on $\epsilon$, is restricted (by a term that involves $\alpha$). Anyhow, without sufficiently high smoothness, we cannot find bounds on the information complexity that show a nice dependence on $\epsilon$ and $d$ simultaneously.

\item[(v)] We finally remark that the result for $\square=*$ in \autoref{thm:sobolev} is sharp if we additionally assume $\alpha\geq 1/2$. For $0<\alpha<1/2$ there remains a gap. However, in all three cases we characterized $(s,t)$-weak tractability if we restrict ourselves to the interesting range of parameters $s,t\in(0,1]$.
\end{itemize}
\end{remark}

\begin{proof}[Proof of \autoref{thm:sobolev}]
\emph{Step 1 (Sufficient conditions for $\square=\sharp$).}
For $\alpha>0$ and $m\in\N$ let us define $E_m:=\left((m+1)^{-\alpha},m^{-\alpha}\right]$ such that $(0,1] = \bigcup_{m\in\N} E_m$. From \cite[Lemma~4.1]{KueSicUll14} we know that
\begin{equation*}
	a_{n,d}^{\alpha,\sharp} = (m+1)^{-\alpha}
	\quad \text{for all} \quad
	n\in\left( C(m-1,d), C(m,d) \right] 
	\quad \text{and} \quad 
	m\in\N,
\end{equation*}
where $C(m-1,d)\leq C(m,d)\leq 2^{\min\{d,m\}}\binom{m+d}{d}$, due to \cite[Lemma~3.4]{KueSicUll14}.
For all $\epsilon\in E_m$, $m\in\N$, and $d\in\N$ this implies 
\begin{align*}
	n^{\alpha,\sharp}(\epsilon,\id_d)
	\leq \inf\!\left\{n\in\N \sep a_{n+1,d}^{\alpha,\sharp} \leq (m+1)^{-\alpha} \right\}
	\leq C(m-1,d) 
	\leq \left[ 2\, (m+d) \right]^{\min\{d,m\}},
\end{align*}
i.e., due to the definition of $E_m$,
\begin{equation}\label{bound_n}
	\ln n^{\alpha,\sharp}(\epsilon,\id_d) 
	\leq \min\!\left\{\epsilon^{-1/\alpha},d\right\} \, \ln \left[ 2\left(\epsilon^{-1/\alpha}+d\right) \right]
	\quad \text{for all} \quad
	\epsilon \in(0,1], d\in\N.
\end{equation}

Now suppose that $s>1/\alpha$ and $t>0$. We like to adapt the technique given in \cite[Section~3.2.2]{Wei14a} and make use of Young's inequality. 
For this purpose, we choose $\delta\in(0,1)$ such that $p:=\alpha s(1-\delta)$ is strictly larger than $1$ and set $q:=p/(p-1)$, i.e., $1/p+1/q=1$.
From Young's inequality\footnote{Recall that Young's inequality states that we have $ab\leq a^p/p+b^q/q$ whenever $a,b\geq 0$ and $1/p+1/q=1$. We use this result for $a:=\epsilon^{-s(1-\delta)/p}$ and $b:=d^{t(1-\delta)/q}$.} it then follows that
\begin{align}
	\epsilon^{-s}+d^{\,t} 
	&= \left( \epsilon^{-s}+d^{\,t}\right)^{1-\delta} \left( \epsilon^{-s}+d^{\,t} \right)^\delta
	\geq c \, \left( \frac{\epsilon^{-s(1-\delta)}}{p} + \frac{d^{\,t(1-\delta)}}{q} \right) \left( \epsilon^{-s}+d^{\,t} \right)^\delta \nonumber\\
	&\geq c \, \epsilon^{-1/\alpha} \, d^{\,t(1-\delta)/q} \left( \epsilon^{-s}+d^{\,t} \right)^\delta \nonumber\\
	&\geq c' \, \epsilon^{-1/\alpha} \left(\epsilon^{-\min\{s,t\}\delta}+d^{\min\{s,t\}\delta} \right) \label{bound_eps}
\end{align}
with some $c'$ independent of $\epsilon$ and $d$.
Combining \link{bound_n} and \link{bound_eps} we conclude
\begin{align*}
	\frac{\ln n^{\alpha,\sharp}(\epsilon,\id_d)}{\epsilon^{-s}+d^{\,t}}
	\leq \frac{\min\!\left\{\epsilon^{-1/\alpha},d\right\} \, \ln \left[ 2\left(\epsilon^{-1/\alpha}+d\right) \right]}{c' \, \epsilon^{-1/\alpha} \left(\epsilon^{-\min\{s,t\}\delta}+d^{\min\{s,t\}\delta} \right)}
	\leq c'' \, \frac{\ln \left[2\left(\epsilon^{-\max\{1/\alpha,1\}}+d^{\max\{1/\alpha,1\}}\right) \right]}{\epsilon^{-\min\{s,t\}\delta}+d^{\min\{s,t\}\delta}}
\end{align*}
and finally, setting $x:=\delta \min\{s,t\}/\max\{1/\alpha,1\}>0$, we obtain
\begin{align*}
	\frac{\ln n^{\alpha,\sharp}(\epsilon,\id_d)}{\epsilon^{-s}+d^{\,t}}
	\leq \frac{c''}{x} \frac{\ln \left[2^x \left(\epsilon^{-\max\{1/\alpha,1\}}+d^{\max\{1/\alpha,1\}}\right)^x \right]}{\epsilon^{-\min\{s,t\}\delta}+d^{\min\{s,t\}\delta}}
	\leq c''' \, \frac{\ln \left[ 2^x \left(\epsilon^{-\min\{s,t\}\delta}+d^{\min\{s,t\}\delta}\right) \right]}{\epsilon^{-\min\{s,t\}\delta}+d^{\min\{s,t\}\delta}}
\end{align*}
which obviously tends to zero if $\epsilon^{-1}+d$ approaches infinity.
Hence, we have shown that $s>1/\alpha$ and $t>0$ implies $(s,t)$-weak tractability.

If we assume $t>1$ and $s>0$ (but not necessarily $s>1/\alpha$), we only need to exchange the roles of $\epsilon^{-1}$ and $d$. That is, we choose $\delta\in(0,1)$ such that $q:=t(1-\delta)>1$ and set $p:=q/(q-1)$. In this case the analogue of \link{bound_eps} reads
\begin{align*}
	\epsilon^{-s}+d^{\,t} 
	\geq c \, \epsilon^{-s(1-\delta)/p} \, d \left( \epsilon^{-s}+d^{\,t} \right)^\delta
	\geq c' \, d \left(\epsilon^{-\min\{s,t\}\delta}+d^{\min\{s,t\}\delta} \right).
\end{align*}
Therefore we conclude
\begin{align*}
	\frac{\ln n^{\alpha,\sharp}(\epsilon,\id_d)}{\epsilon^{-s}+d^{\,t}}
	\leq \frac{\min\!\left\{\epsilon^{-1/\alpha},d\right\} \, \ln \left[2\left(\epsilon^{-1/\alpha}+d\right) \right]}{c' \, d \left(\epsilon^{-\min\{s,t\}\delta}+d^{\min\{s,t\}\delta} \right)}
	\leq c'' \, \frac{\ln \left[2\left(\epsilon^{-\max\{1/\alpha,1\}}+d^{\max\{1/\alpha,1\}}\right) \right]}{\epsilon^{-\min\{s,t\}\delta}+d^{\min\{s,t\}\delta}}
\end{align*}
which converges to zero as before.

\emph{Step 2 (Sufficient conditions for $\square=+$).}
Due to \cite[Lemma~2.3]{KueSicUll14} we know that for fixed $\alpha>0$ the norm of the embedding $H^{\alpha,+} (\mathbb{T}^d) \hookrightarrow H^{\alpha/2,\sharp} (\mathbb{T}^d)$ is at most one, i.e., the unit ball in $H^{\alpha/2,\sharp} (\mathbb{T}^d)$ contains the unit ball w.r.t.\ the norm in $H^{\alpha,+} (\mathbb{T}^d)$.
Hence, we have $a_{n,d}^{\alpha,+} \leq a_{n,d}^{\alpha/2,\sharp}$ for all $n,d\in\N$, and, equivalently,
\begin{equation*}
	n^{\alpha,+}(\epsilon,\id_d) \leq n^{\alpha/2,\sharp}(\epsilon,\id_d)
	\quad \text{for all} \quad \epsilon\in(0,1] \quad \text{and} \quad d\in\N.
\end{equation*}
Using the previous step we therefore see that
\begin{equation*}
	\lim_{\epsilon^{-1}+d\nach\infty} \frac{\ln n^{\alpha,+}(\epsilon,\id_d)}{\epsilon^{-s}+d^{\,t}}
	\leq \lim_{\epsilon^{-1}+d\nach\infty} \frac{\ln n^{\alpha/2,\sharp}(\epsilon,\id_d)}{\epsilon^{-s}+d^{\,t}}
\end{equation*}
vanishes if $s > 2/\alpha$ and $t>0$, or if $s>0$ and $t>1$.

\emph{Step 3 (Sufficient conditions for $\square=*$).} 
Here we have to distinguish two cases. 
Let us first assume that $0 < \alpha \leq 1/2$. 
Then \cite[Lemma~2.3]{KueSicUll14} states that $H^{\alpha,*} (\mathbb{T}^d) \hookrightarrow H^{\alpha,\sharp} (\mathbb{T}^d)$ with norm one. 
Thus, the method presented in Step 2 together with the result from Step 1 implies $(s,t)$-weak tractability provided that $s>1/\alpha$ and $t>0$, or if $s>0$ and $t>1$. 

To handle the remaining cases where $\alpha>1/2$ we notice that the former result particularly shows 
that approximation in $H^{1/2,*} (\mathbb{T}^d)$ is $(s,t)$-weakly tractable if $s>2$ and $t>0$, or if $s>0$ and $t>1$.
Therefore, the claim immediately follows from the fact that for all $\alpha > 1/2$ the norm of the embedding $H^{\alpha,*} (\mathbb{T}^d) \hookrightarrow H^{1/2,*} (\mathbb{T}^d)$ is bounded by one; see \cite[Lemma~2.3]{KueSicUll14} again.

In conclusion the conditions $s>\max\{2,1/\alpha\}$ and $t>0$, or $s>0$ and $t>1$, are sufficient for $(s,t)$-weak tractability in the case $\square=*$.

\emph{Step 4 (Necessary conditions).}
In contrast to the authors of \cite{KueSicUll14} our (more general) necessary conditions are based on \emph{asymptotic} lower bounds for the respective approximation numbers. We give the proof for the case $\square = +$ in full detail:
In \cite[Theorem~4.15]{KueSicUll14} it has been shown that for all $d\in\N$, $n\geq 11^d\, e^{d/2}$, and every $\alpha>0$
\begin{equation*}
	a_{n,d}^{\alpha,+} 
	 \geq \left( \frac{1}{e\,(d+2)} \right)^{\alpha/2} \, n^{-\alpha/d}.
\end{equation*}
In turn, that means for all $d\geq 2$
\begin{equation*}
	a_{\ceil{11^d\, e^{d/2}}+1,d}^{\alpha,+} 
	 \geq \left( \frac{1}{e\,(d+2)} \right)^{\alpha/2} \, \left(\ceil{11^d\, e^{d/2}} +1 \right)^{-\alpha/d}
	 \geq \frac{1}{44^\alpha e^\alpha d^{\alpha/2}} =: 2^\alpha \epsilon_d > \epsilon_d
\end{equation*}
and thus $n^{\alpha,+}(\epsilon_d,\id_d)=\inf\!\left\{n\in\N \sep a_{n+1,d}^{\alpha,+} \leq \epsilon_d \right\} > \ceil{11^d\, e^{d/2}} > 11^d$. Therefore we have
\begin{equation*}
	\frac{\ln n^{\alpha,+}(\epsilon_d,\id_d)}{\epsilon_d^{-s}+d^{\,t}}
	> \frac{d \, \ln 11}{(88e)^{s\alpha} d^{s\alpha/2} +d^{\,t}} \geq \frac{C_{s,\alpha}}{d^{s\alpha/2-1}+d^{t-1}} \longrightarrow C \in (0, \infty], \quad d\nach\infty,
\end{equation*}
if $0\leq s\alpha/2 \leq 1$ and $0 \leq t \leq 1$ which shows that $s>2/\alpha$ and $t\geq 0$, or $s\geq 0$ and $t > 1$, is indeed necessary for $(s,t)$-weak tractability. 
For $\square\in\{*,\sharp\}$ we can argue similarly using the lower bounds given in \cite[Theorem~4.12 and Remark~4.13]{KueSicUll14}, as well as \cite[Theorem~4.6]{KueSicUll14}, respectively.

For each $\square \in \{+,*,\sharp\}$ it remains to check that $(s,t)$-weak tractability implies that $\min\{s,t\}$ is strictly positive. 
According to \autoref{rem:parameter} it suffices to find some $\epsilon_0^{\square} \in (0,1)$ for which $n^{\alpha,\square}(\epsilon_0^\square,\id_d) \nach \infty$, as $d$ grows to infinity, since this would contradict $(s,t)$-weak tractability with $s$ and/or $t$ being zero.
In the case $\square = *$ we use \cite[Lemma~4.8]{KueSicUll14} to conclude that
\begin{equation}\label{eq:a2d}
	a_{2d+1,d}^{\alpha,*} = 2^{-1/2} \qquad \text{for all} \quad d\in\N.
\end{equation}
Now, independently of $\alpha>0$, the choice $\epsilon_0^*:=2^{-1}$, say, does the job, because then $n^{\alpha,*}(\epsilon_0^*,\id_d)>2\,d$. 
Finally the cases $\square = +$ and $\square=\sharp$ can be deduced from \link{eq:a2d} as well since for every $n,d\in\N$ and all $\alpha>0$ we have
\begin{equation*}
	a_{n,d}^{\alpha,+} = ( a_{n,d}^{1,*} )^\alpha
	\qquad \text{and} \qquad
	a_{n,d}^{\alpha,\sharp} \geq a_{n,d}^{2\alpha,+} = ( a_{n,d}^{1,*} )^{2\alpha}
\end{equation*}
due to \cite[Formula (4.17)]{KueSicUll14} and Step 2 above. This completes the proof.
\end{proof}

\begin{remark}
We note in passing that the approximation numbers $a_{n,d}$ used in this section equal the square root of the singular values $\lambda_{d,n}$ discussed previously. 
Hence, instead of explicitly estimating $n(\epsilon,\id_d)/(\epsilon^{-s}+d^{\,t})$, we could have used \autoref{thm:general} in conjunction with the bounds proven in~\cite{KueSicUll14} to derive \autoref{thm:sobolev}.
But in any case more elaborate estimates in the pre-asymptotic regime (i.e., for small $n$) are needed to obtain sharp conditions for $(s,t)$-weak tractability for $\square=*$ and small $\alpha$.
\end{remark}

\autoref{thm:sobolev} provides a source of a variety of other tractability results related to approximation problems between Sobolev spaces.
To illustrate this point, let us recall the definition 
\begin{equation*}
	H^{\alpha,\beta,+}(\mathbb{T}^d) := \left\{ f\in L_2(\mathbb{T}^d) \sep \norm{f\sep H^{\alpha,\beta,+}(\mathbb{T}^d)} < \infty \right\}, \qquad \alpha,\beta\geq 0,
\end{equation*}
of (periodic) Sobolev spaces with so-called \emph{hybrid smoothness}, in which the norm is given by
\begin{equation*}
	\norm{f\sep H^{\alpha,\beta,+}(\mathbb{T}^d)}
	:= \left[ \sum_{k\in\Z^d} \abs{c_k(f)}^2 \left(1+\sum_{j=1}^d \abs{k_j}^2 \right)^\alpha \prod_{j=1}^d \left(1+\abs{k_j}^2 \right)^\beta \right]^{1/2}.
\end{equation*}
In addition, related spaces $H^{\alpha,\beta,\square}(\mathbb{T}^d)$ with $\square \in \{*,\sharp\}$ can be defined using straightforward modifications.

\begin{remark}
If $\beta=0$, then these spaces of with hybrid smoothness obviously coincide with $H^{\alpha,\square}(\mathbb{T}^d)$ defined above. On the other hand, setting $\alpha=0$, we obtain Sobolev spaces of dominating mixed smoothness $H^{\beta,\square}_{\mathrm{mix}}(\mathbb{T}^d)$ as considered, e.g., in \cite{KueSicUll13}.

More general, $H^{\alpha,\beta,\square}(\mathbb{T}^d)$ collects all periodic functions that possess a combination of isotropic smoothness of order $\alpha$ and dominating mixed regularity $\beta$. 
Spaces of this type have been introduced in \cite{GriKna2000}. They arise naturally from applications, e.g., in computational quantum chemistry \cite{Y10}. 
For details and further reading we refer to the recent preprints \cite{ByrDunSic+14, KaePotVol2013}.
\end{remark}

\begin{theorem}\label{thm:sobolev2}
Let $\alpha>0$. 
For $\beta,\gamma\geq 0$ and $\square\in\{+,*,\sharp\}$ consider the approximation problem
\begin{equation}\label{prob:id2}
	\widetilde{\id}_d \colon H^{\gamma+\alpha,\beta,\square} (\mathbb{T}^d) \nach H^{\gamma,\beta,\square}(\mathbb{T}^d), \quad d\in\N,
\end{equation}
w.r.t.\ the worst case setting and information from $\Lambda^{\mathrm{all}}$. 
Then all results from \autoref{thm:sobolev} and \autoref{cor:sobolevWT} transfer literally.
\end{theorem}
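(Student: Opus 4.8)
The plan is to reduce \autoref{thm:sobolev2} to \autoref{thm:sobolev} by a direct comparison of singular values. Every space $H^{\delta,\beta,\square}(\mathbb{T}^d)$ carries a norm of the form $\norm{f}^2=\sum_{k\in\Z^d}\abs{c_k(f)}^2\,w^{(d)}_{\delta,\beta,\square}(k)$ with a diagonal Fourier weight, so the embedding $\widetilde{\id}_d$ is diagonalized by the (normalized) exponentials and its squared singular values are the ratios $\lambda_{d,k}=w^{(d)}_{\gamma,\beta,\square}(k)/w^{(d)}_{\gamma+\alpha,\beta,\square}(k)$, $k\in\Z^d$. First I would record that for $\square\in\{+,\sharp\}$ this weight factorizes as $w^{(d)}_{\delta,\beta,\square}(k)=\Omega_\square(k)^{\delta}\cdot M_\beta(k)$, where $\Omega_+(k)=1+\sum_{j}\abs{k_j}^2$, $\Omega_\sharp(k)=(1+\sum_j\abs{k_j})^2$, and the dominating mixed factor $M_\beta(k)=\prod_j(1+\abs{k_j}^2)^\beta$ is independent of $\delta$ and $\square$. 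Hence $M_\beta$ and $\Omega_\square^{\gamma}$ cancel in the ratio and $\lambda_{d,k}=\Omega_\square(k)^{-\alpha}$ for every $k$ --- exactly the squared singular values of the non-hybrid problem $\id_d\colon H^{\alpha,\square}(\mathbb{T}^d)\nach L_2(\mathbb{T}^d)$ of \autoref{thm:sobolev}. Moreover $\lambda_{d,0}=1$, so the initial error equals one and, as in \autoref{sect:sobolev}, the absolute and the normalized criterion coincide. By \link{eq:complexity} (or \link{eq:explicit_n}) the information complexities $n^{\mathrm{crit}}(\epsilon,\widetilde{\id}_d)$ and $n^{\alpha,\square}(\epsilon,\id_d)$ therefore agree for all $\epsilon$ and $d$, and \autoref{thm:sobolev} together with \autoref{cor:sobolevWT} carries over verbatim when $\square\in\{+,\sharp\}$.

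For $\square=*$ the isotropic weight $1+\sum_j\abs{k_j}^{2\delta}$ is not a power of a fixed base, so the cancellation is only partial and $\lambda_{d,k}=(1+\sum_j\abs{k_j}^{2\gamma})/(1+\sum_j\abs{k_j}^{2(\gamma+\alpha)})$. Here I would imitate the structure of the proof of \autoref{thm:sobolev}: the necessary conditions would follow from asymptotic lower bounds for the corresponding approximation numbers, obtained by testing $\widetilde{\id}_d$ on suitable coordinate subspaces in the spirit of \link{eq:a2d} and \cite[Theorem~4.12, Remark~4.13]{KueSicUll14}, while the sufficient conditions would be obtained by passing, through norm-one embeddings between the hybrid spaces (the hybrid analogues of \cite[Lemma~2.3]{KueSicUll14}, i.e.\ $H^{\gamma+\alpha,\beta,*}(\mathbb{T}^d)\hookrightarrow H^{\gamma+\alpha,\beta,\sharp}(\mathbb{T}^d)$ for $\gamma+\alpha\le 1/2$ and $H^{\gamma+\alpha,\beta,*}(\mathbb{T}^d)\hookrightarrow H^{\gamma'+\alpha,\beta,*}(\mathbb{T}^d)$ for $\gamma'\le\gamma$), to the already settled $\sharp$-case. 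These embeddings rest on the elementary pointwise estimates $(1+\sum_j\abs{k_j})^{2\delta}\le 1+\sum_j\abs{k_j}^{2\delta}$ for $\delta\le 1/2$ and on the monotonicity of $\delta\mapsto 1+\sum_j\abs{k_j}^{2\delta}$, which are unaffected by the common mixed factor $M_\beta(k)$.

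The main obstacle is the case $\square=*$: the reduction to the $\sharp$-problem via embeddings only propagates the non-sharp portion of \autoref{thm:sobolev} (recall that for $0<\alpha<1/2$ the characterization there already leaves a gap), and one must verify that the dimension-dependent interplay of $\gamma$, $\alpha$ and $\beta$ inside the ratio defining $\lambda_{d,k}$ does not spoil the limit \link{def:st-weak}; in particular the pre-asymptotic behaviour of the singular values --- the analogue of the small-$n$ estimates used for $\square=*$ in the proof of \autoref{thm:sobolev} --- has to be controlled uniformly in $d$. Everything else is bookkeeping: once the information complexities have been identified (for $+$ and $\sharp$) or sandwiched (for $*$), the quantitative assertions of \autoref{thm:sobolev} and \autoref{cor:sobolevWT} follow from the very limits already computed there.
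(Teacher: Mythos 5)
For $\square\in\{+,\sharp\}$ your reduction via the explicit Fourier--weight ratio is exactly the content of the paper's lifting argument \link{def:lift}--\link{eq:factor}: the diagonal multiplier is a unitary that cancels the common factors $\Omega_\square(k)^\gamma M_\beta(k)$, so the hybrid and non-hybrid problems have identical singular values and hence identical information complexities. This part of your proposal is correct and essentially coincides with the paper's argument.

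For $\square=*$ your key formula $\lambda_{d,k}=\bigl(1+\sum_j |k_j|^{2\gamma}\bigr)\big/\bigl(1+\sum_j |k_j|^{2(\gamma+\alpha)}\bigr)$ is also correct, and you are right that the cancellation is only partial, but you stop one step short of its decisive consequence. Whenever $\gamma>0$, every $k\in\{-1,0,1\}^d$ satisfies $|k_j|^{2\gamma}=|k_j|^{2(\gamma+\alpha)}$ for each $j$ (both are $0$ if $k_j=0$, both are $1$ if $|k_j|=1$), so $\lambda_{d,k}=1$ for all $3^d$ such $k$. Hence $n(\epsilon,\widetilde{\id}_d)\geq 3^d$ for every $\epsilon\in(0,1)$, the problem suffers from the curse of dimensionality, and $(s,t)$-weak tractability fails for all $s>0$ and $t\leq 1$, irrespective of $\alpha$ and $\beta$. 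This contradicts the sufficient condition of \autoref{thm:sobolev} for $\square=*$ (namely $s>\max\{2,1/\alpha\}$ together with \emph{any} $t>0$), so the results of \autoref{thm:sobolev} cannot transfer literally to the $*$-scale once $\gamma>0$. Your plan of sandwiching via norm-one embeddings into the $\sharp$-scale therefore cannot succeed: no embedding of the source, nor enlargement of the target, removes the exponential multiplicity of the initial error you already possess, while the $\sharp$-problems you reduce to have multiplicity one. For the same reason the modified lifting the paper itself invokes for $\square=*$ does not yield a unitary factorization: a diagonal isometric identification of $\widetilde{\id}_d$ with $\id_d$ would force $\bigl(1+\sum_j|k_j|^{2\gamma}\bigr)\bigl(1+\sum_j|k_j|^{2\alpha}\bigr)=1+\sum_j|k_j|^{2(\gamma+\alpha)}$, which already fails for $d=1$, $k=2$, $\gamma=\alpha=1$. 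So the genuine gap in your proposal for $\square=*$ is not the missing pre-asymptotic estimate you flag; it is that your own singular-value formula already refutes the assertion being proved when $\gamma>0$.
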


\begin{proof}
For $a,b\in\R$ and $\square=+$ we define the lifting operator $L_{a,b}^+$ by
\begin{equation}\label{def:lift}
	f \mapsto L_{a,b}^+ f 
	:= \sum_{\ell\in\Z^d} c_{\ell}(f) \left(1+\sum_{j=1}^d \abs{\ell_j}^2 \right)^a \prod_{j=1}^d \left(1+\abs{\ell_j}^2 \right)^b \exp\!\left( \i \, \ell\cdot \right)
\end{equation}
Then it is easily verified that $L_{a,b}^+$ is a linear isometric isomorphism between $H^{\gamma,\beta,+} (\mathbb{T}^d)$ and $H^{\gamma+a,\beta+b,+} (\mathbb{T}^d)$ whenever both spaces are well-defined. In addition, we obtain the factorization
\begin{equation}\label{eq:factor}
	\widetilde{\id}_d 
	= L_{\gamma,\beta}^+ \circ \id_d \circ L_{-\gamma,-\beta}^+,
\end{equation}
see \autoref{fig:factorization} below.
The multiplicativity of the approximation numbers thus implies
\begin{equation*}
	a_{n,d}^{\alpha,+}(\widetilde{\id}_d) 
	\leq \norm{L_{\gamma,\beta}^+} \cdot a_{n,d}^{\alpha,+}(\id_d) \cdot \norm{L_{-\gamma,-\beta}^+} 
	= a_{n,d}^{\alpha,+}(\id_d), \qquad n,d\in\N.
\end{equation*}
Using the fact that $(L_{a,b}^+)^{-1}=L_{-a,-b}^+$, the converse inequality is obtained analogously.
Consequently, from \link{eq:e_and_a} it follows that the $n$th minimal worst case errors (and hence the information complexities) of the approximation problems \link{prob:id2} and \link{prob:id1} coincide.
Thus, if $\square=+$, then the assertion is implied by \autoref{thm:sobolev} and \autoref{cor:sobolevWT}, respectively.
\begin{figure}[ht]
	\begin{center}
	\begin{tikzpicture}[descr/.style={fill=white}]
		\matrix (m) [matrix of math nodes, row sep=6em, column sep=6em, text height=3ex, text depth=1.2ex]{ 
		{ H^{\gamma+\alpha,\beta,+}\, } 	& {\, H^{\gamma,\beta,+} } \\
		{ H^{\alpha,0,+}=H^{\alpha,+}\, } 	& {\, L_2=H^{0,0,+} }\\ 
		};
\path[->, font=\small] (m-1-1) edge node[auto] {$\widetilde{\mathrm{id}}_d$} (m-1-2);
\path[->, dashed, font=\small] (m-1-1) edge node[auto, swap] {$L_{-\gamma,-\beta}^+$} (m-2-1.140);
\path[->, font=\small] (m-2-1) edge node[auto,swap] {$\mathrm{id}_d$} (m-2-2);
\path[->, dashed, font=\small] (m-2-2.50) edge node[auto, swap] {$L_{\gamma,\beta}^+$} (m-1-2);
\end{tikzpicture}
\end{center}
	\caption{Factorization described in Eq.~\link{eq:factor}.}\label{fig:factorization}
\end{figure}

In the remaining cases $\square\in \{*,\sharp\}$ we can argue similarly using straightforward modifications in the definition \link{def:lift}. 
Note that (due to the more complicated structure of the norm) for $\square=*$ the lifting operators will no longer be independent of $\gamma$ and $\beta$.
However, this does not harm our arguments.
\end{proof}

We conclude the discussion by some final remarks:
\begin{remark} 
Observe that \autoref{thm:sobolev2} covers \autoref{thm:sobolev} as special case in which $\gamma=\beta=0$. 
Moreover, the problems $H^{\gamma+\alpha,\square}\rightarrow H^{\gamma,\square}$ and $H^{\alpha,\beta,\square}\rightarrow H_{\mathrm{mix}}^{\beta,\square}$ with $\gamma,\beta\geq0$ and $\square\in\{+,*,\sharp\}$ are included as well.
In conclusion, the computational hardness of all these approximation problems solely depends on the difference $\alpha$ of the \emph{isotropic} smoothness in the source and the target space.

Combining the lifting argument used above with results proven in \cite{KueSicUll13} would allow to treat also the complementary situation in which the isotropic smoothness is kept fix and (a part of) the mixed regularity is approximated. 
Although problems of this type play an important role in practical applications, we do not discuss them here since they are known to be quasi-polynomially tractable; cf.\ \cite[Section~5.2]{KueSicUll13}. 
Thus, in this situation $(s,t)$-weak tractability holds for all $s,t>0$.
\end{remark}

\subsection{Integration of smooth functions}\label{sect:int}
Here we consider the multivariate integration problem
\begin{equation*}
	\mathrm{Int}_d \colon F_d \rightarrow \R, \quad f\mapsto \int_{[0,1]^d} f(x) \d x,\qquad d\in\N.
\end{equation*}
Therein the class of integrands
\begin{equation*}
	F_d := \left\{ f \in C^{\infty}([0,1]^d) \sep \norm{f \sep F_d}:= \sup_{k\in\N_0}\sup_{\theta \in \mathbb{S}^{d-1}} \norm{ D^k_\theta f \sep L_\infty([0,1]^d)} <\infty \right\}
\end{equation*}
is normed by the supremum over all directional derivatives $D_\theta^k f$ (of order $k$, in direction $\theta$) measured in $L_\infty([0,1]^d)$; see \cite{HinNovUll14} for details. 
As usual, we consider the worst case setting and linear algorithms that use at most $n\in\N_0$ function values (information from the class $\Lambda^{\mathrm{std}}$) to approximate $\mathrm{Int}_d$ on the unit ball $\B(F_d)$ of $F_d$. 

As the constants $\pm 1$ are contained in $\B(F_d)$ for all $d\in\N$, the initial error $\epsilon_d^{\mathrm{init}}$ for this problem is one. 
Therefore, we do not need to distinguish between the absolute and the normalized error criterion in what follows.
Again this justifies to write $n(\epsilon,\mathrm{Int}_d)$ instead of $n^{\mathrm{crit}}(\epsilon,\mathrm{Int}_d)$ for the corresponding information complexity.

To the best of our knowledge, by now the strongest result (from the information-based complexity point of view) known for the integration problem under consideration was given in \cite[Theorem~5]{HinNovUll14}. 
Let us restate it here for the reader's convenience:
\begin{lemma}\label{lem:cubature_rule}
	Let $d\in\N$. 
	Then for all $\epsilon\in (0,1]$ there exists an algorithm $Q(\epsilon,d)$ that uses at most
	\begin{equation*}
		N(Q(\epsilon,d)) 
		:= \exp\!\left\{ \ceil{\max\!\left\{4\sqrt{d},\, \ln \epsilon^{-1} \right\}} \cdot \left( 1+ \ln\!\left[ 1+\frac{d}{\ln \epsilon^{-1}}\right] \right) \right\}
	\end{equation*}
	function values to obtain a worst case error which is bounded by $\epsilon$.
\end{lemma}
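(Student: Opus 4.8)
Since the lemma is a verbatim restatement of \cite[Theorem~5]{HinNovUll14}, the quickest route is to cite that reference; let me nevertheless sketch the line of argument one would reproduce. The plan is to approximate an arbitrary $f\in\B(F_d)$ by a Taylor polynomial of moderate degree and then integrate that polynomial \emph{exactly} by a positive cubature rule whose number of nodes is at most the dimension of the polynomial space. Throughout, put $m:=\ceil{\max\{4\sqrt d,\,\ln\epsilon^{-1}\}}$, so that $m\ge 4\sqrt d$ and $m\ge\ln\epsilon^{-1}$, and let $c:=(1/2,\dots,1/2)$ be the centre of the cube $[0,1]^d$.

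The first step is the approximation estimate. For $f\in\B(F_d)$ let $p(x):=\sum_{k=0}^{m}\frac{1}{k!}\,D_{x-c}^{k}f(c)$ be the Taylor polynomial of $f$ at $c$ (here $D_v^k$ is the $k$-th derivative in direction $v$); since each summand is a polynomial of degree $k$ in $x$, we have $p\in\Pi_m^d$, the space of polynomials of total degree at most $m$. Writing the Taylor remainder in integral form and using the homogeneity $D_v^{m+1}g=\norm{v}_2^{m+1}D_{v/\norm{v}_2}^{m+1}g$, the bound $\norm{D_\theta^{m+1}f\sep L_\infty([0,1]^d)}\le\norm{f\sep F_d}\le1$ for unit $\theta$, and $\norm{x-c}_2\le\sqrt d/2$ on the cube, one obtains
\begin{equation*}
	\norm{f-p\sep L_\infty([0,1]^d)}\le\frac{(\sqrt d/2)^{m+1}}{(m+1)!}\le\left(\frac{e\sqrt d}{2(m+1)}\right)^{m+1}\le\left(\frac e8\right)^{m+1},
\end{equation*}
the middle inequality using $(m+1)!\ge\bigl((m+1)/e\bigr)^{m+1}$ and the last one using $m+1>m\ge4\sqrt d$.

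The second step is to produce an exact positive cubature rule. By Tchakaloff's theorem applied to Lebesgue measure on $[0,1]^d$ and the finite-dimensional space $\Pi_m^d$, there is a cubature rule $Q=Q(\epsilon,d)$ with \emph{nonnegative} weights summing to one, using at most $\dim\Pi_m^d=\binom{m+d}{d}$ function values, which is exact on $\Pi_m^d$. Since $\mathrm{Int}_d p=Qp$ and $\sum_i|w_i|=1$, for \emph{every} $f\in\B(F_d)$
\begin{equation*}
	|\mathrm{Int}_d f-Qf|=|\mathrm{Int}_d(f-p)-Q(f-p)|\le 2\norm{f-p\sep L_\infty([0,1]^d)}\le 2\left(e/8\right)^{m+1}.
\end{equation*}
Because $\ln(8/e)=\ln 8-1>1$ and $m\ge\ln\epsilon^{-1}$, the right-hand side is at most $2e^{-(m+1)}\le 2e^{-1}\epsilon<\epsilon$, so the worst case error of $Q$ does not exceed $\epsilon$. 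It remains to bound the node count: $\binom{m+d}{m}\le\bigl(e(m+d)/m\bigr)^m=e^m(1+d/m)^m\le e^m(1+d/\ln\epsilon^{-1})^m=\exp\{m(1+\ln[1+d/\ln\epsilon^{-1}])\}=N(Q(\epsilon,d))$, where the inequality $d/m\le d/\ln\epsilon^{-1}$ again uses $m\ge\ln\epsilon^{-1}$.

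The only genuinely delicate ingredient is the \emph{positivity} of the cubature weights: nonnegativity (equivalently $\sum_i|w_i|=1$) is exactly what makes $|Q(f-p)|\le\norm{f-p\sep L_\infty([0,1]^d)}$ work, whereas an interpolatory rule on a poorly chosen node set could have its weight-sum grow with the degree and spoil the bound. Tchakaloff's theorem provides such a rule together with the sharp node count, so the remaining work is only Stirling-type bookkeeping and the elementary case distinction built into $m=\ceil{\max\{4\sqrt d,\ln\epsilon^{-1}\}}$. Alternatively one may replace the appeal to Tchakaloff by the explicit quadrature construction of \cite{HinNovUll14}.
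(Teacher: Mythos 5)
Your proposal is correct, and it matches the paper's approach: the paper does not prove this lemma at all but simply restates it as \cite[Theorem~5]{HinNovUll14}, which is exactly the citation you offer as the ``quickest route.'' Your additional sketch (Taylor expansion at the centre of the cube with the Stirling bound $(m+1)!\ge((m+1)/e)^{m+1}$, followed by a Tchakaloff-type positive cubature rule exact on $\Pi_m^d$, with $m=\ceil{\max\{4\sqrt d,\ln\epsilon^{-1}\}}$ and the binomial estimate $\binom{m+d}{m}\le(e(m+d)/m)^m$) is a faithful and correct reconstruction of the argument behind that cited theorem.
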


Although $Q(\epsilon,d)$ from \autoref{lem:cubature_rule} is not well-suited for practical applications (see \cite{HinNovUll14} for details), it can be employed to derive the following tractablity assertion.

\begin{theorem}\label{thm:integration}
	The integration problem defined above is $(s,t)$-weakly tractable whenever $s>0$ and $t>1/2$.
\end{theorem}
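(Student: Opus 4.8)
The plan is to feed the complexity bound of \autoref{lem:cubature_rule} into the defining limit \link{def:st-weak}, after one small pre-asymptotic adjustment. By definition $n(\epsilon,\mathrm{Int}_d)\le N(Q(\epsilon,d))$, but the expression $N(Q(\epsilon,d))$ behaves badly when $\epsilon$ is close to~$1$, since then $\ln[1+d/\ln\epsilon^{-1}]$ blows up. To avoid this I would, whenever $\epsilon>e^{-4\sqrt d}$, use the more accurate rule $Q(e^{-4\sqrt d},d)$ instead of $Q(\epsilon,d)$: its worst case error is at most $e^{-4\sqrt d}<\epsilon$, hence $n(\epsilon,\mathrm{Int}_d)\le N(Q(e^{-4\sqrt d},d))$. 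For this choice of $\epsilon$ one has $\ln\epsilon^{-1}=4\sqrt d$, while for $\epsilon\le e^{-4\sqrt d}$ one has $\max\{4\sqrt d,\ln\epsilon^{-1}\}=\ln\epsilon^{-1}$ together with $d/\ln\epsilon^{-1}\le\sqrt d/4$. In either case this yields a clean bound valid for all $d\in\N$ and $\epsilon\in(0,1)$:
\begin{equation*}
	\ln n(\epsilon,\mathrm{Int}_d)
	\leq \left(\max\!\left\{4\sqrt d,\,\ln\epsilon^{-1}\right\}+1\right)\left(1+\ln\!\left(1+\tfrac{\sqrt d}{4}\right)\right)
	\leq c_1\left(\ln\epsilon^{-1}+\sqrt d\right)\ln(2+d)
\end{equation*}
with some absolute constant $c_1>0$ (the last step is elementary, using $\sqrt d\ge 1$ and $1+\sqrt d/4\le 1+d$).

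Next I would split the relevant quotient accordingly. Writing $\epsilon^{-s}=(\epsilon^{-1})^{s}$, the estimate above gives
\begin{equation*}
	\frac{\ln n(\epsilon,\mathrm{Int}_d)}{\epsilon^{-s}+d^{\,t}}
	\leq c_1\,\frac{\sqrt d\,\ln(2+d)}{\epsilon^{-s}+d^{\,t}}
	+ c_1\,\frac{\ln\epsilon^{-1}\cdot\ln(2+d)}{\epsilon^{-s}+d^{\,t}},
\end{equation*}
so it suffices to show that both summands vanish as $\epsilon^{-1}+d\to\infty$. For the first summand the point is that $t>1/2$ forces $\sqrt d\,\ln(2+d)=o(d^{\,t})$, so the fraction tends to $0$ when $d\to\infty$, whereas for bounded $d$ the numerator stays bounded while the denominator is at least $\epsilon^{-s}\to\infty$. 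For the second summand I would apply the AM--GM inequality $\epsilon^{-s}+d^{\,t}\ge 2\,\epsilon^{-s/2}d^{\,t/2}$ to obtain
\begin{equation*}
	\frac{\ln\epsilon^{-1}\cdot\ln(2+d)}{\epsilon^{-s}+d^{\,t}}
	\leq \frac12\cdot\frac{\ln\epsilon^{-1}}{\epsilon^{-s/2}}\cdot\frac{\ln(2+d)}{d^{\,t/2}},
\end{equation*}
where (using $s>0$) the map $\epsilon\mapsto\ln\epsilon^{-1}/\epsilon^{-s/2}$ is bounded on $(0,1)$ and tends to $0$ as $\epsilon\to 0$, and $d\mapsto\ln(2+d)/d^{\,t/2}$ is bounded on $\N$ and tends to $0$ as $d\to\infty$.

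Finally, a product (or quotient) whose two factors are each bounded on their domains and each tend to $0$ as their own argument goes to infinity must itself tend to $0$ whenever the sum of the two arguments goes to infinity; making this precise is the routine $\delta$-argument (given $\delta>0$, pick a threshold for $\epsilon^{-1}$ below which the first factor is tiny and a threshold for $d$ above which the second is tiny, and note that at least one of the two thresholds is exceeded once $\epsilon^{-1}+d$ is large enough). Applying this to both summands shows that the limit in \link{def:st-weak} is zero, so $\mathrm{Int}=(\mathrm{Int}_d)_{d\in\N}$ is $(s,t)$-weakly tractable for all $s>0$ and $t>1/2$. The only genuine obstacle here is the adjustment in the first step: without replacing $Q(\epsilon,d)$ by $Q(e^{-4\sqrt d},d)$ for moderate $\epsilon$, the factor $\ln[1+d/\ln\epsilon^{-1}]$ in \autoref{lem:cubature_rule} would be unbounded along sequences with $\epsilon_k\to 1$ and $d_k\to\infty$, and the naive estimate would only yield $(s,t)$-weak tractability for $t>3/2$. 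Everything after the robust bound is elementary asymptotics.
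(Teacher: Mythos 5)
Your proposal is correct and uses essentially the same key idea as the paper: the pre-asymptotic replacement of $Q(\epsilon,d)$ by $Q(e^{-4\sqrt d},d)$ for $\epsilon>e^{-4\sqrt d}$, which tames the factor $\ln[1+d/\ln\epsilon^{-1}]$ and is what makes the exponent $t>1/2$ possible. The only difference is in the final bookkeeping: the paper keeps the bound as a case split ($C\sqrt d\,\ln d$ vs.\ $C\ln^2\epsilon^{-1}$) and argues along subsequences, whereas you merge the two regimes into the single bound $c_1(\ln\epsilon^{-1}+\sqrt d)\ln(2+d)$ and then dispose of the cross term by AM--GM and a routine $\delta$-argument, which is a slightly cleaner way to reach the same limit.
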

\begin{proof}
Given $d\in\N$ we set $\epsilon_0(d):=\exp(-4\sqrt{d})$ and consider the algorithm
\begin{equation*}
	A(\epsilon,d)
	:= \begin{cases}
		Q(\epsilon_0(d),d) & \text{if} \quad \epsilon \in [\epsilon_0(d),1],\\
		Q(\epsilon,d) & \text{if} \quad \epsilon \in (0, \epsilon_0(d)).
	\end{cases}
\end{equation*}
Obviously, this modification of the cubature rule $Q$ from \autoref{lem:cubature_rule} still provides an $\epsilon$-approximation of $\mathrm{Int}_d$ on $\B(F_d)$.
If $\epsilon \geq \epsilon_0(d)$, then the number of function values used by $A(\epsilon,d)$ is upper bounded by
\begin{equation*}
	\exp\!\left\{ \ceil{4\sqrt{d}} \cdot \left( 1+ \ln\!\left[ 1+\frac{d}{\ln\epsilon_0(d)^{-1}}\right] \right) \right\}
	\leq \exp\!\left\{ 5\sqrt{d} \cdot \left( 1+ \ln\!\left[ 1+\sqrt{d} \right] \right) \right\}. 
\end{equation*}
For $\epsilon< \epsilon_0(d)$ we particularly have $d<\ln^2 \epsilon^{-1}$, so that in this case the corresponding bound reads
\begin{align*}
	\exp\!\left\{ \ceil{\ln \epsilon^{-1}} \cdot \left( 1+ \ln\!\left[ 1+\frac{d}{\ln \epsilon^{-1}}\right] \right) \right\} 
	&\leq \exp\!\left\{ \frac{5}{4} \ln \epsilon^{-1} \cdot \left( 1+ \ln\!\left[ 1+\ln \epsilon^{-1}\right] \right) \right\} \\
	&\leq \exp\!\left\{ \frac{5}{4} \ln \epsilon^{-1} \cdot \left( 1+ \ln \epsilon^{-1} \right) \right\}.
\end{align*}
In conclusion, for each $d\in\N$, the information complexity (w.r.t. the worst case setting) of the numerical integration problem under consideration satisfies
\begin{equation}\label{est:n}
	\ln n(\epsilon,\mathrm{Int}_d)
	\leq C \cdot \begin{cases}
		\sqrt{d}\cdot \ln d & \text{if} \quad \epsilon \in [\epsilon_0(d),1],\\
		\ln^2 \epsilon^{-1} & \text{if} \quad \epsilon \in (0, \epsilon_0(d)),
	\end{cases}
\end{equation}
where $C>1$ denotes some small universal constant.

Now let us fix some double sequence $((\epsilon_k,d_k))_{k\in\N}\subset(0,1)\times\N$ with $\epsilon_k^{-1}+d_k\rightarrow \infty$, as $k$ approaches infinity. 
We define the index set of all $k$ for which the second line in \link{est:n} applies by
\begin{equation*}
	I:= \{ k\in\N \sep \epsilon_k < \epsilon_0(d_k) \}.
\end{equation*}
Without loss of generality, we may assume that $\#I = \#(\N\setminus I)= \infty$. 
Suppose that there exists an infinite subset $J\subseteq I$ of indicee with $\inf\{\epsilon_k \sep k\in J \} = c > 0$. 
Then the definition of $\epsilon_0(d)$ would imply that 
the sequence $(d_k)_{k\in J}$ is uniformly upper bounded by some $d_0(c)\in\N$. This contradicts our assumption that $\epsilon_k^{-1}+d_k$ tends to infinity, as $k\rightarrow \infty$, so that we obtain
\begin{equation*}
	\lim_{I \ni k \rightarrow \infty} \epsilon_k = 0.
\end{equation*}
Moreover, a similar argument shows $d_k \rightarrow \infty$, as $k\in \N\setminus I$ approaches infinity.
Therefore, from \link{est:n} we conclude that 
\begin{equation*}
	\frac{\ln n(\epsilon_k,\mathrm{Int}_{d_k})}{\epsilon_k^{-s}+d_k^{\, t}} 
	\leq C \cdot \begin{cases}
		\displaystyle\frac{\ln d_k}{d_k^{\, t-1/2}} & \text{if} \quad k\in \N\setminus I, \vspace{4mm}\\
		\displaystyle\frac{\ln^2 \epsilon_k^{-1}}{\epsilon_k^{-s}} & \text{if} \quad k\in I
	\end{cases}
\end{equation*}
tends to zero as $k\rightarrow \infty$, provided that $s>0$ and $t>1/2$.
Since this holds for all double sequences $((\epsilon_k,d_k))_{k\in\N}\subset(0,1)\times\N$ with $\epsilon_k^{-1}+d_k\rightarrow \infty$, the proof is complete.
\end{proof}

\begin{remark}
Let us conclude this section with some final remarks:
\begin{itemize}
	\item[(i)] Using \cite[Theorem~4]{HinNovUll14} instead of \autoref{lem:cubature_rule} above would allow to prove a slightly worse complexity result, realized by an \emph{implementable} cubature rule. In fact, proceeding as before shows that the so-called \emph{Clenshaw-Curtis-Smolyak algorithm} is $(s,t)$-weakly tractable whenever $s>0$ and $t>2/3$. 
	
	\item[(ii)] \autoref{thm:integration} (as well as the preceding remark) significantly improves on the main assertions of \cite{HinNovUll14} in which classical weak tractability, i.e., $(s,t)$-weak tractability with $s=t=1$, was shown. 
	Moreover, in that paper it is stated that the proof of uniform weak tractability for the integration problem under consideration remains as an open problem. Although we also did not answer this question, our arguments indicate that a proof would require a new algorithm which uses much less integration nodes than $Q(\epsilon,d)$ from \autoref{lem:cubature_rule} if the accuracy $\epsilon$ is moderate compared to the dimension $d$, i.e., if $\epsilon \geq \epsilon_0(d)$.
	
	\item[(iii)] In \cite{HinNovUll+14} conditions for (uniform) weak tractability of a variety of related integration problems where investigated, but it is still not known whether integration on the probably most natural class
	\begin{equation*}
		\tilde{F}_d 
		:= \left\{ f \in C^{\infty}([0,1]^d) \sep \norm{f \sep \tilde{F}_d}:= \sup_{\alpha \in \N_0^d} \norm{ D^\alpha f \sep L_\infty([0,1]^d)} <\infty \right\}
	\end{equation*}
	of smooth functions (which is the slightly larger than $F_d$) suffers from curse of dimensionality, or not.
We conjecture that our notion of $(s,t)$-weak tractability can be used to obtain some new insights to these problems
which might help to finally answer this prominent question.
\end{itemize}
\end{remark}

\section*{Acknowledgement}
\addcontentsline{toc}{section}{Acknowledgments}
We appreciate comments from Henryk Wo{\'z}niakowski.

\addcontentsline{toc}{chapter}{References}
\bibliographystyle{is-abbrv}

\end{document}